\newtheorem{theorem}{Theorem}
\newtheorem{corollary}{Corollary}
\newtheorem{lemma}{Lemma}
\newtheorem{proposition}{Proposition}
\newtheorem{remark}{Remark}
\newenvironment{proof}[1][Proof]{\noindent\textbf{#1.} }{\ \rule{0.5em}{0.5em}}
\begin{document}

\title{New analytic properties of nonstandard Sobolev-type Charlier
orthogonal polynomials}
\author{Edmundo J. Huertas$^{1}$, Anier Soria-Lorente$^{2}$ \\
$^{1}$Departamento de Ingeniería Civil: Hidráulica y Ordenación del
Territorio\\
E.T.S. de Ingeniería Civil, Universidad Politécnica de Madrid\\
C/ Alfonso XII, 3 y 5, 28014 Madrid, Spain.\\
ej.huertas.cejudo@upm.es, ehuertasce@gmail.com\\
\\
$^{2}$Department of Basic Sciences, Granma University\\
Km. 17.5 de la carretera de Bayamo-Manzanillo, Bayamo, Cuba\\
asorial@udg.co.cu}
\date{\emph{(\today)}}
\maketitle

\begin{abstract}
In this contribution we consider the sequence $\{Q_{n}^{\lambda }\}_{n\geq
0} $ of monic polynomials orthogonal with respect to the following inner
product involving differences 
\begin{equation*}
\langle p,q\rangle _{\lambda }=\int_{0}^{\infty }p\left( x\right)
q\left(x\right) d\psi ^{(a)}(x)+\lambda \,\Delta p(c)\Delta q(c),
\end{equation*}
where $\lambda \in \mathbb{R}_{+}$, $\Delta $ denotes the forward difference
operator defined by $\Delta f\left( x\right) =f\left( x+1\right)
-f\left(x\right) $, $\psi ^{(a)}$ with $a>0$ is the well known Poisson
distribution of probability theory%
\begin{equation*}
d\psi ^{(a)}(x)=\frac{e^{-a}a^{x}}{x!}\quad \text{at }x=0,1,2,\ldots ,
\end{equation*}%
and $c\in \mathbb{R}$ is such that $\psi ^{(a)}$ has no points of increase
in the interval $(c,c+1)$. We derive its corresponding hypergeometric
representation. The ladder operators and two different versions of the
linear difference equation of second order corresponding to these
polynomials are given. Recurrence formulas of five and three terms, the
latter with rational coefficients, are presented. Moreover, for real values
of $c$ such that $c+1<0$, we obtain some results on the distribution of its
zeros as decreasing functions of $\lambda $, when this parameter goes from
zero to infinity.

\medskip

\textbf{AMS Subject Classification:} 33C47

\medskip

\textbf{Key Words and Phrases:} Charlier polynomials, Sobolev-type
polynomials, Discrete kernel polynomials, Discrete quasi-orthogonal
polynomials.

\medskip

\end{abstract}



\section{Introduction}

\label{[S1]-Intro}



Let $\mathbb{P}$ be the linear space of polynomials with real coefficients.
In this contribution we analyze the sequence $\{Q_{n}^{\lambda }\}_{n\geq 0}$
of monic polynomials orthogonal with respect to the following inner product
on $\mathbb{P}$ involving differences%
\begin{equation}
\langle p,q\rangle _{\lambda }=\int_{0}^{\infty }p\left( x\right) q\left(
x\right) d\psi ^{(a)}(x)+\lambda \,\Delta p(c)\Delta q(c),
\label{S1-SobtypInnPr}
\end{equation}%
where $\lambda \in \mathbb{R}_{+}$, $\Delta $ denotes the forward difference
operator defined by $\Delta f\left( x\right) =f\left( x+1\right) -f\left(
x\right) $. Concerning $d\psi ^{(a)}$ we consider the well known Poisson
distribution of probability theory where $a>0$, and $\psi ^{(a)}(x)$ is the
step function with the jump%
\begin{equation}
\frac{e^{-a}a^{x}}{x!}\quad \text{at the point }x=0,1,2,\ldots .
\label{S1-CharlierMeasure}
\end{equation}

All the results presented in this paper (except those in Section \ref%
{[S6]-Zeros}, where more restrictive conditions on $c$ will be needed) are
valid whenever $c\in \mathbb{R}$ and $\psi ^{(a)}$ has no points of increase
in the interval $(c,c+1)$. As was already explained in \cite{B-JCAM95}, for
this particular case it means that $c\in \mathbb{R}$ can be chosen as
precisely one of the points of the spectrum of $\psi ^{(a)}$, or even it can
be chosen in such a way that, if the spectrum of $\psi ^{(a)}$ is contained
in an interval $I$, the condition $I\cap (c,c+1)=\varnothing $ is verified.
These restrictions on the values of $c$ come from the fact that we require
the distribution $(x-c)(x-c-1)d\psi ^{(a)}$ to be nonnegative on the
positive real semiaxis.

The study of orthogonal polynomials with respect to inner products involving
differences as (\ref{S1-SobtypInnPr}) was introduced by H. Bavinck\ in a
series of seminal papers \cite{B-JCAM95}, \cite{B-AA95} and \cite{B-IM96},
by analogy with the so called \textit{discrete Sobolev inner products}
involving derivatives (see for example \cite{AMRR-SJMA92}, \cite{MPP-RdM92}, 
\cite{M-JCAM93} and the recent survey \cite{MX-EM14}). Because of this
analogy, we follow \cite{B-IM96} in calling the elements of the sequence of
monic orthogonal polynomials (SMOP in the sequel) $\{Q_{n}^{\lambda
}\}_{n\geq 0}$ to be \textit{Sobolev-type Charlier} orthogonal polynomials.
Conjointly, in those three said primary papers is given a general explicit
representation of $Q_{n}^{\lambda }(x)$, not only valid on the Charlier
case, but also useful for any other discrete orthogonality measure. It is
proved that $\{Q_{n}^{\lambda }\}_{n\geq 0}$ satisfy a five term recurrence
relation and an analogue of the Christoffel-Darboux formula is presented,
along with several results on their corresponding zero distribution.
Moreover, in the last of the three papers it is shown that these
Sobolev-type Charlier orthogonal polynomials are eigenfunctions of an 
\textit{infinite order difference operator} which, together with the
corresponding eigenvalues, are both linear perturbations of those in the
classical Charlier case. The special case $c=0$ is deeply analyzed
throughout all these three works.

Since then, and to the best of our knowledge, the Charlier case has remained
untouched. Several researchers have done further work on the Sobolev-type
case for discrete orthogonality measures, but mainly concerning the Meixner
case (see \cite{AGM-JDEA00}, \cite{AGMB-JCAM00}, \cite{KD-AAS12}, \cite%
{M-JCAM15}, \cite{MPP-RJ11} and the references given there).

It must be clear that the kind of modification studied here is of quite a
different nature to that considered in \cite{AGM-JCAM95}, where are also
considered discrete point masses in the inner product. The kind of
modifications studied there give rise to new families of \textit{standard}
orthogonal polynomials. They are standard, in the sense that the operator of
multiplication by $x$ is symmetric with respect to such an inner product,
i.e. $\langle xp,q\rangle =\langle p,xq\rangle $, and then the well known
nice properties of standard orthogonal polynomials arise: there exist a
three term recurrence relation, the zeros of consecutive polynomials in the
sequence interlace, all the zeros are simple and real, and a long etcetera.
Quite the opposite, the SMOP $\{Q_{n}^{\lambda }\}_{n\geq 0}$\ considered
here is not standard in the aforementioned way, and we will not find those
nice properties anymore. It means that can be complex zeros, there is no
three term recurrence relation in the usual way, etc, etc, and therefore $%
\{Q_{n}^{\lambda }\}_{n\geq 0}$ is known as a \textit{non-standard} sequence.

In this paper, we delve once more into the inner product (\ref%
{S1-SobtypInnPr}), applying some recent computational and analytical
techniques to obtain fresh results for the Sobolev-type Charlier polynomials 
$\{Q_{n}^{\lambda }\}_{n\geq 0}$ and their zeros. First, we establish its $%
_{3}F_{1}$ hypergeometric character, which was unknown so far. Next, we
obtain explicit expressions for the \textit{ladder difference operators} and
we use them to obtain two different versions of the \textit{second order
difference equation} satisfied by the Sobolev-type Charlier SMOP. We also
use the ladder difference operators to obtain a kind of three term
recurrence formula with rational coefficients, which allows us to find every
polynomial $Q_{n+1}^{\lambda }(x)$ of precise degree $n+1$, in terms of only
the previous two consecutive polynomials of the SMOP $Q_{n}^{\lambda }(x)$
and $Q_{n-1}^{\lambda }(x)$. In \cite{B-AA95} it was proved that this
Sobolev-type Charlier SMOP satisfy a five term recurrence relation, and here
we provide the explicit expression for the coefficients in this high order
recurrence formula. Finally, we find a new representation for the
Sobolev-type Charlier orthogonal polynomials which is useful to obtain sharp
limits (and the speed of convergence to them) of their zeros in terms of the
parameter\ of the perturbation $\lambda $, which somehow determines how
important the Sobolev-type perturbation is on the classical Charlier measure 
$\psi ^{(a)}$.

The structure of the manuscript is as follows. In the next Section we recall
some basic facts of the classical Charlier polynomials, which will be needed
in the sequel. In Section \ref{[S3]-ConnForms} we deal with some connection
formulas and the hypergeometric representation of $Q_{n}^{\lambda }(x)$. In
Section \ref{[S4]-Ladder}, proceeding directly from the representation of
the SMOP $\{Q_{n}^{\lambda }\}_{n\geq 0}$\ given in \cite[eq. (2.13)]%
{B-JCAM95}, we provide the difference ladder operators and the second linear
difference equation that they satisfy. Section \ref{[S5]-5TRR} is devoted to
the previously mentioned fundamental recurrence formulas for this
Sobolev-type Charlier SMOP. Finally, Section \ref{[S6]-Zeros}, is focused on
the behavior of the zeros of these polynomials in terms of the mass
parameter $\lambda $.



\section{Preliminaries}

\label{[S2]-Prelim}



The forward ($\Delta $) and backward ($\nabla $) difference operators are
defined by%
\begin{equation*}
\Delta f\left( x\right) =f\left( x+1\right) -f\left( x\right) ,\qquad \nabla
f(x)=f(x)-f(x-1).
\end{equation*}%
These operators satisfy the following properties, which will be useful in
the sequel%
\begin{equation}
\begin{array}{rcl}
\Delta \left[ f(x)g(x)\right] & = & f(x)\Delta g(x)+g(x)\Delta f(x)+\Delta
f(x)\Delta g(x), \\ 
\nabla \left[ f(x)g(x)\right] & = & f(x)\nabla g(x)+g(x-1)\nabla f(x)%
\end{array}
\label{S2-RegProd}
\end{equation}

\begin{equation}
\Delta \left[ \frac{f(x)}{g(x)}\right] =\frac{g(x)\Delta f(x)-f(x)\Delta g(x)%
}{g(x)\left[ g(x)+\Delta g(x)\right] }.  \label{S2-QuoRule}
\end{equation}

Let $\{C_{n}^{(a)}\}_{n\geq 0}$ be the sequence of monic Charlier
polynomials, orthogonal with respect to the inner product on $\mathbb{P}$
(see for instance \cite[Ch. VI.1]{Chi78}, \cite[Ch. 6]{Ism05}, \cite[Ch. 9]%
{Koekoek10}, \cite[Ch. 2]{Nikiforov91}, \cite[Section 6.7]{Szego75}, and the
references therein)%
\begin{equation*}
\left\langle p,q\right\rangle =\int_{0}^{\infty }p(x)q(x)d\psi
^{(a)}(x),\quad a>0,
\end{equation*}%
which can be explicitly given in the following several equivalent ways%
\begin{equation}
C_{n}^{(a)}(x)=(-a)^{n}\,_{2}F_{0}\left( -n,-x;-;\frac{-1}{a}\right) ,
\label{S2-ChHyper}
\end{equation}%
\begin{equation*}
C_{n}^{(a)}(x)=\sum_{k=0}^{n}\dbinom{n}{k}\dbinom{x}{k}k!(-a)^{n-k},
\end{equation*}%
or%
\begin{equation*}
C_{n}^{(a)}(x)=(-1)^{n}a^{-x}\Gamma (x+1)\Delta ^{n}\left[ \frac{a^{x}}{%
\Gamma (x-n+1)}\right] .
\end{equation*}%
Here, $_{r}F_{s}$ denotes the ordinary hypergeometric series defined by%
\begin{equation*}
_{r}F_{s}\left( a_{1},a_{2},\ldots ,a_{r};b_{1},b_{2},\ldots ,b_{s};x\right)
=\sum_{k=0}^{\infty }\frac{(a_{1})_{k}(a_{2})_{k}\cdots (a_{r})_{k}}{%
(b_{1})_{k}(b_{2})_{k}\cdots (b_{s})_{k}}\frac{x^{k}}{k!},
\end{equation*}%
\begin{equation*}
(a)_{0}:=1,\quad (a)_{k}:=a(a+1)(a+2)\cdots (a+k-1),\quad k=1,2,3,\ldots
\end{equation*}%
Next, we summarize some basic properties of Charlier orthogonal polynomials
to be used in the sequel.



\begin{proposition}
\label{[S2]-Proposition1}Let $\{C_{n}^{(a)}\}_{n\geq 0}$ be the classical
Charlier SMOP. The following statements hold.

\begin{enumerate}
\item Three term recurrence relation. For every $n\geq 0$, 
\begin{equation}
C_{n+1}^{(a)}(x)=\left( x-\beta _{n}\right) C_{n}^{(a)}(x)-\gamma
_{n}C_{n-1}^{(a)}(x),  \label{S2-C3TRR}
\end{equation}%
with initial conditions $C_{-1}^{(a)}(x)=0$, $C_{0}^{(a)}(x)=1$, and
coefficients $\beta _{n}=n+a$, $\gamma _{n}=na$.

\item Structure relation. For every $n\in \mathbb{N}$,%
\begin{equation}
x\nabla C_{n}^{(a)}(x)=nC_{n}^{(a)}(x)+naC_{n-1}^{(a)}(x).  \label{S2-StR}
\end{equation}

\item Norm. For every $n\in \mathbb{N}$,%
\begin{equation}
||C_{n}^{(a)}||^{2}=\int_{0}^{\infty }\left( C_{n}^{(a)}(x)\right) ^{2}d\psi
^{(a)}(x)=n!a^{n},  \label{S2-Norms}
\end{equation}%
and therefore%
\begin{equation}
\frac{||C_{n}^{(a)}||^{2}}{||C_{n-1}^{(a)}||^{2}}=\gamma _{n}=na.
\label{S3-cocNorms}
\end{equation}

\item Second order difference equations. For every $n\in \mathbb{N}$ (see 
\cite[Ch. VI.1]{Chi78}),%
\begin{equation}
a\Delta ^{2}C_{n}^{(a)}(x)-(x+1-a-n)\Delta C_{n}^{(a)}(x)+nC_{n}^{(a)}(x)=0,
\label{EqDiffChar1}
\end{equation}%
and also we have the hypergeometric type equation (see \cite[Ch. 4]%
{Alvarez03} and \cite[§ 2.1]{Nikiforov91}),%
\begin{equation}
x\Delta \nabla C_{n}^{(a)}(x)+(a-x)\Delta C_{n}^{(a)}(x)+nC_{n}^{(a)}(x)=0.
\label{EqDiffChar2}
\end{equation}

\item First order difference relation. For every $n\in \mathbb{N}$ (see \cite%
[Ch. VI.1]{Chi78}),%
\begin{equation}
\Delta C_{n}^{(a)}(x)=nC_{n-1}^{(a)}(x).  \label{S2-StructRel}
\end{equation}
\end{enumerate}
\end{proposition}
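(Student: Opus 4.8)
The plan is to extract the five statements from the explicit representations of $C_n^{(a)}$ recorded above, establishing first the two lowering-type relations together with the norm, and then deducing the recurrence and both difference equations by purely algebraic manipulation. Throughout I would work mainly from the binomial representation $C_{n}^{(a)}(x)=\sum_{k=0}^{n}\binom{n}{k}\binom{x}{k}k!(-a)^{n-k}$, which behaves well under $\Delta$ and $\nabla$.

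First I would prove the first order difference relation (\ref{S2-StructRel}) and the structure relation (\ref{S2-StR}) directly from this representation, the two ingredients being the elementary identities $\Delta\binom{x}{k}=\binom{x}{k-1}$ and $x\nabla\binom{x}{k}=k\binom{x}{k}$. Applying $\Delta$ term by term, using $\binom{n}{k}k!=n\binom{n-1}{k-1}(k-1)!$, and shifting the summation index collapses the sum to $nC_{n-1}^{(a)}(x)$, which is (\ref{S2-StructRel}). Applying $x\nabla$ term by term produces $\sum_{k}k\binom{n}{k}\binom{x}{k}k!(-a)^{n-k}$; subtracting $nC_{n}^{(a)}(x)$ and using $(k-n)\binom{n}{k}=-n\binom{n-1}{k}$ leaves exactly $naC_{n-1}^{(a)}(x)$, giving (\ref{S2-StR}).

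Next I would compute the norm (\ref{S2-Norms}) from the Rodrigues-type representation $C_{n}^{(a)}(x)=(-1)^{n}a^{-x}\Gamma(x+1)\Delta^{n}[a^{x}/\Gamma(x-n+1)]$ together with the Poisson weight $\rho(x)=e^{-a}a^{x}/x!$. Since $\Gamma(x+1)/x!=1$ and $a^{-x}a^{x}=1$ on the integers, one has $C_{n}^{(a)}(x)\rho(x)=(-1)^{n}e^{-a}\Delta^{n}[a^{x}/\Gamma(x-n+1)]$, so that $\|C_{n}^{(a)}\|^{2}=(-1)^{n}e^{-a}\sum_{x\ge0}C_{n}^{(a)}(x)\Delta^{n}[a^{x}/\Gamma(x-n+1)]$. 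I would then sum by parts (discrete Abel summation) $n$ times, transferring the operator $\Delta^{n}$ onto the factor $C_{n}^{(a)}$; iterating (\ref{S2-StructRel}) gives $\Delta^{n}C_{n}^{(a)}=n!$, and after the shift the remaining argument becomes $1/\Gamma(x+1)=1/x!$, so the residual exponential sum cancels the normalisation $e^{-a}$ and the norm reduces to $n!a^{n}$. The boundary terms vanish because of the factor $1/\Gamma(x-n+1)$, whose poles force the corresponding contributions to zero. Dividing consecutive norms yields (\ref{S3-cocNorms}).

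With the norm available, the three term recurrence (\ref{S2-C3TRR}) follows from the general theory of monic orthogonal polynomial sequences, which guarantees a relation of that exact form; I would then identify $\beta_{n}=n+a$ by comparing the coefficient of $x^{n-1}$ (which the binomial representation gives as $-\binom{n}{2}-na$, so that $\beta_{n}$ is the difference of consecutive such coefficients), and $\gamma_{n}=na$ as the norm ratio (\ref{S3-cocNorms}). Finally I would obtain both second order difference equations algebraically. Using the operator identity $\Delta\nabla=\Delta-\nabla$, equation (\ref{EqDiffChar2}) reduces to $a\Delta C_{n}^{(a)}-x\nabla C_{n}^{(a)}+nC_{n}^{(a)}=0$, which is immediate upon substituting (\ref{S2-StructRel}) and (\ref{S2-StR}); and iterating (\ref{S2-StructRel}) to write $\Delta^{2}C_{n}^{(a)}=n(n-1)C_{n-2}^{(a)}$ turns (\ref{EqDiffChar1}) into the recurrence (\ref{S2-C3TRR}) with index shifted by one, so it holds as well. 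The main obstacle is the norm computation: the bookkeeping of the $n$-fold summation by parts and the verification that every boundary contribution cancels is the only genuinely technical point, everything else being reindexing and substitution.
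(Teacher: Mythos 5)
The paper does not actually prove Proposition \ref{[S2]-Proposition1}: it is stated as a summary of classical facts about Charlier polynomials, with citations to \cite{Chi78}, \cite{Alvarez03} and \cite{Nikiforov91} in place of an argument. Your proposal is therefore necessarily a different route --- a self-contained derivation --- and it is correct. The two elementary identities $\Delta\binom{x}{k}=\binom{x}{k-1}$ and $x\nabla\binom{x}{k}=k\binom{x}{k}$, combined with $\binom{n}{k}k!=n\binom{n-1}{k-1}(k-1)!$ and $(k-n)\binom{n}{k}=-n\binom{n-1}{k}$, do give (\ref{S2-StructRel}) and (\ref{S2-StR}) exactly as you describe; reducing (\ref{EqDiffChar2}) via $\Delta\nabla=\Delta-\nabla$ to $a\Delta C_{n}^{(a)}-x\nabla C_{n}^{(a)}+nC_{n}^{(a)}=0$ and (\ref{EqDiffChar1}) via $\Delta^{2}C_{n}^{(a)}=n(n-1)C_{n-2}^{(a)}$ to the recurrence (\ref{S2-C3TRR}) shifted by one is a clean and complete way to get item 4 from items 1, 2 and 5; and identifying $\beta_{n}=n+a$ as the difference of the subleading coefficients $-\binom{n}{2}-na$ and $\gamma_{n}=na$ from (\ref{S3-cocNorms}) is the standard argument once the existence of a monic three-term recurrence is granted (legitimate here, since orthogonality of $\{C_{n}^{(a)}\}_{n\geq 0}$ is part of the hypothesis). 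The only point needing a little more care than you give it is the norm: in the $n$-fold summation by parts the boundary contributions must be checked at both ends --- the lower-end terms vanish because of the poles of $1/\Gamma(x-n+1+j)$ at nonpositive integers, as you say, but one should also record that the upper-end terms vanish because the summand is a polynomial times $a^{x}/\Gamma(x-n+1+j)$, which decays as $x\to\infty$. With that addendum the computation closes to $\|C_{n}^{(a)}\|^{2}=n!a^{n}$, and the whole proposal stands; what it buys over the paper's treatment is simply that the reader need not consult the cited monographs.
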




We denote the $n$-th reproducing kernel by%
\begin{equation*}
K_{n}(x,y)=\sum_{k=0}^{n}\frac{C_{k}^{(a)}(x)C_{k}^{(a)}(y)}{%
||C_{k}^{(a)}||^{2}}.
\end{equation*}%
Then, for all $n\in \mathbb{N}$,%
\begin{equation*}
K_{n}(x,y)=\frac{1}{||C_{n}^{(a)}||^{2}}\frac{%
C_{n+1}^{(a)}(x)C_{n}^{(a)}(y)-C_{n+1}^{(a)}(y)C_{n}^{(a)}(x)}{x-y}.
\end{equation*}%
Provided $\Delta ^{k}f(x)=\Delta \left[ \Delta ^{k-1}f(x)\right] $, for the
partial finite difference of $K_{n}(x,y)$ we will use the following notation%
\begin{equation}
K_{n}^{\left( i,j\right) }(x,y)=\Delta _{x}^{i}\left[ \Delta _{y}^{j}\left[
K_{n}\left( x,y\right) \right] \right] =\sum_{k=0}^{n}\frac{\Delta
^{i}C_{k}^{(a)}(x)\Delta ^{j}C_{k}^{(a)}(y)}{||C_{k}^{(a)}||^{2}}
\label{S2-S2-Kij}
\end{equation}%
and we observe the following consequence, provided that $c$ is not a zero of 
$C_{n}^{(a)}(x)$ for any $n$%
\begin{equation}
\frac{\lbrack \Delta C_{n}^{(a)}(c)]^{2}}{||C_{n}^{(a)}||^{2}}%
=K_{n}^{(1,1)}(c,c)-K_{n-1}^{(1,1)}(c,c)>0.  \label{S2-Nucleos}
\end{equation}

Finally, the following assumption will be needed throughout the paper. A
straightforward consequence of (\ref{S1-SobtypInnPr}), is that the
multiplication operator by $(x-c)(x-c-1)$ is symmetric with respect to such
a discrete Sobolev inner product. Indeed, for any $p,q\in \mathbb{P}$ we have%
\begin{eqnarray}
\left\langle (x-c)(x-c-1)p(x),q(x)\right\rangle _{\lambda } &=&\left\langle
p(x),(x-c)(x-c-1)q(x)\right\rangle _{\lambda }  \notag \\
&=&\left\langle (x-c)(x-c-1)p(x),q(x)\right\rangle  \label{S2-POk} \\
&=&\left\langle p(x),(x-c)(x-c-1)q(x)\right\rangle .  \notag
\end{eqnarray}



\section{Connection formulas and hypergeometric representation}

\label{[S3]-ConnForms}



In this Section, we modify the connection formula for the Sobolev-type
Charlier polynomials given in \cite[(2.13)]{B-JCAM95}, in order to obtain
alternative representations for $Q_{n}^{\lambda }(x)$ in terms of several
consecutive polynomials from the SMOP $\{C_{n}^{(a)}\}_{n\geq 0}$. In the
first of these new representations, we show that every coefficient can be
found in a very compact way, and directly related to the following parameters%
\begin{equation}
a_{n}=\frac{C_{n+1}^{(a)}(c)}{C_{n}^{(a)}(c)},\qquad b_{n}=\frac{1+\lambda
K_{n}^{(1,1)}(c,c)}{1+\lambda K_{n-1}^{(1,1)}(c,c)},\qquad n\geq 1.
\label{S3-Parameters}
\end{equation}%
From (\ref{S2-Nucleos}) we get%
\begin{equation*}
b_{n}=\frac{1+\lambda \left( K_{n-1}^{(1,1)}(c,c)+\frac{[\Delta
C_{n}^{(a)}(c)]^{2}}{||C_{n}^{(a)}||^{2}}\right) }{1+\lambda
K_{n-1}^{(1,1)}(c,c)}=1+\lambda \frac{\lbrack \Delta C_{n}^{(a)}(c)]^{2}}{%
||C_{n}^{(a)}||^{2}\left( 1+\lambda K_{n-1}^{(1,1)}(c,c)\right) }
\end{equation*}%
which will be always positive, because $\lambda $, $[\Delta
C_{n}^{(a)}(c)]^{2}$, $||C_{n}^{(a)}||^{2}$ are always positive and from (%
\ref{S2-S2-Kij}) we observe $K_{n-1}^{(1,1)}(c,c)>0$ as well. Notice that%
\begin{equation*}
b_{n}-1=\lambda \frac{\lbrack \Delta C_{n}^{(a)}(c)]^{2}}{%
||C_{n}^{(a)}||^{2}\left( 1+\lambda K_{n-1}^{(1,1)}(c,c)\right) }>0.
\end{equation*}

Having said that, we begin with the connection formula provided in \cite[%
(2.13)]{B-JCAM95}%
\begin{equation}
Q_{n}^{\lambda}(x)=A_{1}(x;n)C_{n}^{(a)}(x)+B_{1}(x;n)C_{n-1}^{(a)}(x),\quad
n\geq 1,  \label{S3-Bavinck(2.13)}
\end{equation}%
where the coefficients are the rational functions%
\begin{eqnarray*}
A_{1}(x;n) &=&1-\lambda \frac{\Delta Q_{n}^{\lambda }(c)}{%
||C_{n-1}^{(a)}||^{2}}\frac{C_{n-1}^{(a)}(c)+\left( x-c\right) \Delta
C_{n-1}^{(a)}(c)}{(x-c)(x-c-1)}, \\
B_{1}(x;n) &=&\lambda \frac{\Delta Q_{n}^{\lambda }(c)}{||C_{n-1}^{(a)}||^{2}%
}\frac{C_{n}^{(a)}(c)+\left( x-c\right) \Delta C_{n}^{(a)}(c)}{(x-c)(x-c-1)},
\end{eqnarray*}%
and $\Delta Q_{n}^{\lambda }(c)$ is given by (see \cite[(2.4)]{B-JCAM95})%
\begin{equation}
\Delta Q_{n}^{\lambda }(c)=\frac{\Delta C_{n}^{(a)}(c)}{1+\lambda
K_{n-1}^{(1,1)}(c,c)}.  \label{S3-Bavinck(2.4)}
\end{equation}%
By straightforward calculation, from (\ref{S3-Bavinck(2.13)}) we can write%
\begin{eqnarray}
(x-c)(x-c-1)A_{1}(x;n) &=&(x-c)(x-c-1)+A_{11}(n;c)(x-c)+A_{10}(n;c),
\label{Prop2eq1} \\
(x-c)(x-c-1)B_{1}(x;n) &=&B_{11}(n;c)(x-c)+B_{10}(n;c),  \label{Prop2eq2}
\end{eqnarray}%
where%
\begin{eqnarray*}
A_{11}(n;c) &=&-\lambda \frac{\Delta Q_{n}^{\lambda }(c)}{%
||C_{n-1}^{(a)}||^{2}}\Delta C_{n-1}^{(a)}(c),\quad A_{10}(n;c)=-\lambda 
\frac{\Delta Q_{n}^{\lambda }(c)}{||C_{n-1}^{(a)}||^{2}}C_{n-1}^{(a)}(c), \\
B_{11}(n;c) &=&\lambda \frac{\Delta Q_{n}^{\lambda }(c)}{%
||C_{n-1}^{(a)}||^{2}}\Delta C_{n}^{(a)}(c),\quad B_{10}(n;c)=\lambda \frac{%
\Delta Q_{n}^{\lambda }(c)}{||C_{n-1}^{(a)}||^{2}}C_{n}^{(a)}(c).
\end{eqnarray*}%
Thus%
\begin{equation*}
(x-c)(x-c-1)Q_{n}^{\lambda }(x)=
\end{equation*}%
\begin{eqnarray}
&&(x-c)(x-c-1)C_{n}^{(a)}(x)+A_{11}(n;c)(x-c)C_{n}^{(a)}(x)+A_{10}(n;c)C_{n}^{(a)}(x)
\label{S3-CFExp} \\
&&+B_{11}(n;c)(x-c)C_{n-1}^{(a)}(x)+B_{10}(n;c)C_{n-1}^{(a)}(x).  \notag
\end{eqnarray}

From (\ref{S3-Bavinck(2.4)}), and (\ref{S3-cocNorms}) we get%
\begin{eqnarray*}
A_{11}(n;c) &=&-\lambda \frac{\gamma _{n}}{||C_{n}^{(a)}||^{2}}\frac{\Delta
C_{n}^{(a)}(c)}{1+\lambda K_{n-1}^{(1,1)}(c,c)}\Delta C_{n-1}^{(a)}(c)\frac{%
\Delta C_{n}^{(a)}(c)}{\Delta C_{n}^{(a)}(c)} \\
&=&\frac{-\gamma _{n}}{1+\lambda K_{n-1}^{(1,1)}(c,c)}\frac{\Delta
C_{n-1}^{(a)}(c)}{\Delta C_{n}^{(a)}(c)}\frac{\lambda \lbrack \Delta
C_{n}^{(a)}(c)]^{2}}{||C_{n}^{(a)}||^{2}}.
\end{eqnarray*}%
Next, from (\ref{S2-StructRel}) and (\ref{S2-Nucleos}) we deduce%
\begin{equation*}
A_{11}(n;c)=-\gamma _{n-1}\frac{1}{\frac{C_{n-1}^{(a)}(c)}{C_{n-2}^{(a)}(c)}}%
\frac{\left( 1+\lambda K_{n}^{(1,1)}(c,c)\right) -\left( 1+\lambda
K_{n-1}^{(1,1)}(c,c)\right) }{1+\lambda K_{n-1}^{(1,1)}(c,c)},
\end{equation*}%
and taking into account (\ref{S3-Parameters}) and the coefficients in (\ref%
{S2-C3TRR}), we observe%
\begin{equation*}
A_{11}(n;c)=-\frac{\gamma _{n-1}}{a_{n-2}}\left( b_{n}-1\right) =-a\frac{n-1%
}{a_{n-2}}\left( b_{n}-1\right) .
\end{equation*}%
Concerning $A_{10}(n;c)$, combining (\ref{S3-Bavinck(2.4)}), (\ref%
{S3-cocNorms}), (\ref{S2-StructRel}) and (\ref{S2-Nucleos}) in the same way
yields%
\begin{eqnarray*}
A_{10}(n;c) &=&-\lambda \frac{1}{1+\lambda K_{n-1}^{(1,1)}(c,c)}\frac{%
[\Delta C_{n}^{(a)}(c)]^{2}}{\frac{||C_{n}^{(a)}||^{2}}{\gamma _{n}}}\frac{a%
}{na} \\
&=&-\frac{\left( 1+\lambda K_{n}^{(1,1)}(c,c)\right) -\left( 1+\lambda
K_{n-1}^{(1,1)}(c,c)\right) }{1+\lambda K_{n-1}^{(1,1)}(c,c)}a \\
&=&-a(b_{n}-1).
\end{eqnarray*}%
By proceeding with few more steps in the same fashion, we obtain%
\begin{equation*}
B_{11}(n;c)=an(b_{n}-1)\quad \text{and}\quad B_{10}(n;c)=a\,a_{n-1}(b_{n}-1).
\end{equation*}%
Thus, we have proved the following result



\begin{proposition}
\label{[S3]-Proposition2}For every $n\geq 1$, $\lambda \in \mathbb{R}_{+}$, $%
c\in \mathbb{R}$ such that $\psi ^{(a)}$ has no points of increase in the
interval $(c,c+1)$, and $a>0$, the four coefficients $A_{11}(n;c)$, $%
A_{10}(n;c)$, $B_{11}(n;c)$, and $B_{10}(n;c)$ can be expressed in terms of $%
a_{n}$, $b_{n}$ given in (\ref{S3-Parameters}) as follows%
\begin{equation*}
\begin{array}{ll}
A_{11}(n;c)=-a\frac{n-1}{a_{n-2}}\left( b_{n}-1\right) , & 
A_{10}(n;c)=-a(b_{n}-1),\smallskip \\ 
B_{11}(n;c)=an(b_{n}-1), & B_{10}(n;c)=a\,a_{n-1}(b_{n}-1).%
\end{array}%
\end{equation*}%
and therefore%
\begin{equation*}
(x-c)(x-c-1)Q_{n}^{\lambda }(x)=
\end{equation*}%
\begin{eqnarray}
&&(x-c)(x-c-1)C_{n}^{(a)}(x)-a\frac{n-1}{a_{n-2}}\left( b_{n}-1\right)
(x-c)C_{n}^{(a)}(x)-a(b_{n}-1)C_{n}^{(a)}(x)  \label{S3-NewCForm} \\
&&+an(b_{n}-1)(x-c)C_{n-1}^{(a)}(x)+a\,a_{n-1}(b_{n}-1)C_{n-1}^{(a)}(x). 
\notag
\end{eqnarray}
\end{proposition}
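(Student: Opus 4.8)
The plan is to prove the four identities by reducing each coefficient to the two basic parameters $a_{n}$ and $b_{n}$ of (\ref{S3-Parameters}), using the classical Charlier facts collected in Proposition \ref{[S2]-Proposition1} together with the kernel identity (\ref{S2-Nucleos}). The starting observation is that each of $A_{11}$, $A_{10}$, $B_{11}$, $B_{10}$ carries the common prefactor $\pm\lambda\,\Delta Q_{n}^{\lambda}(c)/||C_{n-1}^{(a)}||^{2}$ multiplied by exactly one of $\Delta C_{n-1}^{(a)}(c)$, $C_{n-1}^{(a)}(c)$, $\Delta C_{n}^{(a)}(c)$, or $C_{n}^{(a)}(c)$; hence the whole computation is four instances of a single normalization argument. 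First I would substitute the closed form (\ref{S3-Bavinck(2.4)}) for $\Delta Q_{n}^{\lambda}(c)$, which introduces the denominator $1+\lambda K_{n-1}^{(1,1)}(c,c)$ and a factor $\Delta C_{n}^{(a)}(c)$ in the numerator.

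The key maneuver is to manufacture the combination $\lambda[\Delta C_{n}^{(a)}(c)]^{2}/||C_{n}^{(a)}||^{2}$ in each expression, because (\ref{S2-Nucleos}) identifies $[\Delta C_{n}^{(a)}(c)]^{2}/||C_{n}^{(a)}||^{2}$ with the kernel increment $K_{n}^{(1,1)}(c,c)-K_{n-1}^{(1,1)}(c,c)$; dividing this increment by $1+\lambda K_{n-1}^{(1,1)}(c,c)$ and telescoping the numerator as $(1+\lambda K_{n}^{(1,1)})-(1+\lambda K_{n-1}^{(1,1)})$ produces exactly $b_{n}-1$. To force this combination to surface I would use the norm ratio (\ref{S3-cocNorms}) to trade $||C_{n-1}^{(a)}||^{2}$ for $||C_{n}^{(a)}||^{2}/\gamma_{n}$, and, where only a single power of $\Delta C_{n}^{(a)}(c)$ is present, multiply and divide by $\Delta C_{n}^{(a)}(c)$ (as needed for $A_{11}$), while for $B_{11}$ the square appears directly. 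After this step every coefficient has the shape (constant)$\times(b_{n}-1)$ times a ratio of Charlier values evaluated at $c$.

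It then remains to identify those ratios with the $a_{n}$. For this I would apply the first-order relation (\ref{S2-StructRel}) in the form $\Delta C_{k}^{(a)}(c)=kC_{k-1}^{(a)}(c)$, so the leftover quotients become ratios of consecutive $C_{k}^{(a)}(c)$, which by definition (\ref{S3-Parameters}) are the $a_{n-2}$ or $a_{n-1}$; finally $\gamma_{n}=na$ together with the coefficients in (\ref{S2-C3TRR}) fixes the scalar constants and cancels the spurious factors of $n$. Substituting the four resulting values into (\ref{S3-CFExp}) yields the stated expansion (\ref{S3-NewCForm}). The one genuinely delicate point is the index bookkeeping: converting $\Delta C_{n-1}^{(a)}(c)$ into $(n-1)C_{n-2}^{(a)}(c)$ and recognizing the ratio $C_{n-1}^{(a)}(c)/C_{n-2}^{(a)}(c)$ as $a_{n-2}$ rather than $a_{n-1}$ is precisely where a careless shift would corrupt the formula for $A_{11}$, and matching the norm index against the difference-operator index throughout demands the most care.
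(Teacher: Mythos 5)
Your proposal is correct and follows essentially the same route as the paper: substitute (\ref{S3-Bavinck(2.4)}), trade $||C_{n-1}^{(a)}||^{2}$ for $||C_{n}^{(a)}||^{2}/\gamma_{n}$ via (\ref{S3-cocNorms}), multiply and divide by $\Delta C_{n}^{(a)}(c)$ to surface $\lambda[\Delta C_{n}^{(a)}(c)]^{2}/||C_{n}^{(a)}||^{2}$, telescope it into $b_{n}-1$ via (\ref{S2-Nucleos}), and reduce the remaining quotients to $a_{n-2}$ or $a_{n-1}$ through (\ref{S2-StructRel}). You also correctly flag the one delicate point, the index shift producing $a_{n-2}$ in $A_{11}$.
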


Concerning the norm of the polynomials $Q_{n}^{\lambda }(x)$ we can state
the following



\begin{theorem}
\label{[S3]-Theorem1}Let $\left\{ Q_{n}^{\lambda }\right\} _{n\geq 0}$ be
the sequence of Sobolev-type Charlier orthogonal polynomials defined by (\ref%
{S3-Bavinck(2.13)}). Then, for every $n\geq 1$, $\lambda \in \mathbb{R}_{+}$%
, $c\in \mathbb{R}$ such that $\psi ^{(a)}$ has no points of increase in the
interval $(c,c+1)$, and $a>0$, the norm of these polynomials, orthogonal
with respect to (\ref{S1-SobtypInnPr}) is%
\begin{equation}
||Q_{n}^{\lambda }||_{\lambda }^{2}=||C_{n}^{(a)}||^{2}+\gamma
_{n}(b_{n}-1)||C_{n-1}^{(a)}||^{2}.  \label{S3-NormSP}
\end{equation}
\end{theorem}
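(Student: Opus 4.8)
The plan is to collapse the Sobolev-type norm onto the classical Charlier norm by a double use of orthogonality, and then to read off the mass contribution directly from (\ref{S3-Bavinck(2.4)}). First I would note that $Q_{n}^{\lambda }$ is, by construction, orthogonal with respect to $\langle \cdot ,\cdot \rangle _{\lambda }$ to every polynomial of degree strictly less than $n$. Since both $Q_{n}^{\lambda }$ and $C_{n}^{(a)}$ are monic of degree $n$, their difference $C_{n}^{(a)}-Q_{n}^{\lambda }$ has degree at most $n-1$, whence $\langle Q_{n}^{\lambda },C_{n}^{(a)}-Q_{n}^{\lambda }\rangle _{\lambda }=0$. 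This yields the key reduction
\begin{equation*}
||Q_{n}^{\lambda }||_{\lambda }^{2}=\langle Q_{n}^{\lambda },Q_{n}^{\lambda }\rangle _{\lambda }=\langle Q_{n}^{\lambda },C_{n}^{(a)}\rangle _{\lambda }.
\end{equation*}

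Next I would expand the right-hand side through the definition (\ref{S1-SobtypInnPr}) of the inner product. For the continuous part $\int_{0}^{\infty }Q_{n}^{\lambda }(x)C_{n}^{(a)}(x)\,d\psi ^{(a)}(x)$, which is the classical Charlier inner product $\langle Q_{n}^{\lambda },C_{n}^{(a)}\rangle$, I would again write $Q_{n}^{\lambda }=C_{n}^{(a)}+r$ with $\deg r\leq n-1$ and invoke the classical orthogonality of $C_{n}^{(a)}$ to all lower degree polynomials, so that this integral reduces to $||C_{n}^{(a)}||^{2}$ by (\ref{S2-Norms}). The discrete part is simply $\lambda \,\Delta Q_{n}^{\lambda }(c)\,\Delta C_{n}^{(a)}(c)$, and substituting (\ref{S3-Bavinck(2.4)}) turns it into $\lambda\,[\Delta C_{n}^{(a)}(c)]^{2}/(1+\lambda K_{n-1}^{(1,1)}(c,c))$.

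Finally I would recognize this last quantity using the closed form for $b_{n}-1$ derived immediately after (\ref{S3-Parameters}), namely $b_{n}-1=\lambda\,[\Delta C_{n}^{(a)}(c)]^{2}/(||C_{n}^{(a)}||^{2}(1+\lambda K_{n-1}^{(1,1)}(c,c)))$, so that the mass term equals $(b_{n}-1)||C_{n}^{(a)}||^{2}$. Combining the two pieces gives $||Q_{n}^{\lambda }||_{\lambda }^{2}=||C_{n}^{(a)}||^{2}+(b_{n}-1)||C_{n}^{(a)}||^{2}$, and rewriting $||C_{n}^{(a)}||^{2}=\gamma _{n}||C_{n-1}^{(a)}||^{2}$ via (\ref{S3-cocNorms}) in the second summand produces exactly (\ref{S3-NormSP}).

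There is essentially no genuine obstacle here: the whole argument hinges on the single structural observation that orthogonality may be applied in \emph{both} the Sobolev-type product and the classical Charlier product, the first to replace one factor $Q_{n}^{\lambda }$ by $C_{n}^{(a)}$ and the second to evaluate the resulting integral. Once this is in place, the remaining manipulation is bookkeeping with the already-established identities for $b_{n}-1$ and the Charlier norm ratio, so I expect the proof to be short.
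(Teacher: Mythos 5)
Your proof is correct, and it takes a genuinely different route from the paper's. The paper pairs $Q_{n}^{\lambda }$ against $(x-c)(x-c-1)\pi _{n-2}(x)$ for a monic $\pi _{n-2}$, invokes the symmetry property (\ref{S2-POk}) to transfer the quadratic factor and drop the mass term, and then reads the answer off the connection formula (\ref{S3-NewCForm}) of Proposition \ref{[S3]-Proposition2} (the surviving coefficient being $B_{11}(n;c)=an(b_{n}-1)=\gamma _{n}(b_{n}-1)$). You instead pair $Q_{n}^{\lambda }$ directly against $C_{n}^{(a)}$ in the Sobolev product, split the result into its integral and mass parts, evaluate the integral by classical Charlier orthogonality, and identify the mass contribution via Bavinck's formula (\ref{S3-Bavinck(2.4)}) and the explicit expression for $b_{n}-1$ given after (\ref{S3-Parameters}). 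Your argument is shorter and more self-contained: it bypasses Proposition \ref{[S3]-Proposition2} and the symmetry (\ref{S2-POk}) entirely, needing only the definition of the inner product and two identities already available in the text. What the paper's route buys is a demonstration of the machinery ((\ref{S2-POk}) together with (\ref{S3-NewCForm})) that is reused almost verbatim in the proof of the five-term recurrence relation in Theorem \ref{[S5]-Theorem6}, so the two computations reinforce each other there. Both derivations land on the same identity, since $(b_{n}-1)||C_{n}^{(a)}||^{2}=\gamma _{n}(b_{n}-1)||C_{n-1}^{(a)}||^{2}$ by (\ref{S3-cocNorms}).
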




\begin{proof}
Clearly%
\begin{equation*}
||Q_{n}^{\lambda }||_{\lambda }^{2}=\langle Q_{n}^{\lambda
}(x),(x-c)(x-c-1)\pi _{n-2}(x)\rangle _{\lambda },
\end{equation*}%
for every monic polynomial $\pi _{n-2}$ of degree $n-2$. From (\ref{S2-POk})
we have%
\begin{equation*}
||Q_{n}^{\lambda }||_{\lambda }^{2}=\langle (x-c)(x-c-1)Q_{n}^{\lambda
}(x),\pi _{n-2}(x)\rangle .
\end{equation*}%
Taking into account (\ref{S3-NewCForm}) and again (\ref{S2-POk}), by
orthogonality we deduce%
\begin{eqnarray*}
||Q_{n}^{\lambda }||_{\lambda }^{2} &=&\langle
C_{n}^{(a)}(x),(x-c)(x-c-1)\pi _{n-2}(x)\rangle \\
&&+\gamma _{n}(b_{n}-1)\langle (x-c)C_{n-1}^{(a)}(x),\pi _{n-2}(x)\rangle \\
&=&||C_{n}^{(a)}||^{2}+\gamma _{n}(b_{n}-1)||C_{n-1}^{(a)}||^{2}.
\end{eqnarray*}%
This completes the proof.
\end{proof}


This can be used to derive the following result



\begin{corollary}
\label{[S3]-Corollary1}Under the assumptions of Theorem \ref{[S3]-Theorem1},
the following expression%
\begin{equation*}
\frac{||Q_{n}^{\lambda }||_{\lambda }^{2}}{||C_{n}^{(a)}||^{2}}=\frac{%
1+\lambda K_{n}^{(1,1)}(c,c)}{1+\lambda K_{n-1}^{(1,1)}(c,c)}
\end{equation*}%
holds.
\end{corollary}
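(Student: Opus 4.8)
The plan is simply to divide the norm identity of Theorem~\ref{[S3]-Theorem1} by $||C_{n}^{(a)}||^{2}$ and recognize the outcome as $b_{n}$, so the whole argument is a short chain of substitutions rather than any new idea.

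First I would take the formula (\ref{S3-NormSP}), namely $||Q_{n}^{\lambda }||_{\lambda }^{2}=||C_{n}^{(a)}||^{2}+\gamma _{n}(b_{n}-1)||C_{n-1}^{(a)}||^{2}$, and divide both sides by $||C_{n}^{(a)}||^{2}$. This yields
\begin{equation*}
\frac{||Q_{n}^{\lambda }||_{\lambda }^{2}}{||C_{n}^{(a)}||^{2}}=1+\gamma _{n}(b_{n}-1)\,\frac{||C_{n-1}^{(a)}||^{2}}{||C_{n}^{(a)}||^{2}}.
\end{equation*}

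The decisive simplification comes from the ratio of classical Charlier norms in (\ref{S3-cocNorms}), which gives $||C_{n}^{(a)}||^{2}/||C_{n-1}^{(a)}||^{2}=\gamma _{n}$, equivalently $||C_{n-1}^{(a)}||^{2}/||C_{n}^{(a)}||^{2}=1/\gamma _{n}$. Substituting this makes the two factors $\gamma _{n}$ cancel, leaving $1+(b_{n}-1)=b_{n}$. Finally I would invoke the definition of $b_{n}$ in (\ref{S3-Parameters}), namely $b_{n}=(1+\lambda K_{n}^{(1,1)}(c,c))/(1+\lambda K_{n-1}^{(1,1)}(c,c))$, to read off precisely the claimed expression.

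Since each step is a single-line substitution, there is no real obstacle; the only point requiring minor care is tracking the direction of the norm ratio in (\ref{S3-cocNorms}) so that the factors $\gamma _{n}$ cancel rather than multiply, and noting that $\gamma _{n}\neq 0$ (indeed $\gamma _{n}=na>0$ for $n\geq 1$) so the division is legitimate.
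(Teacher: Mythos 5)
Your proof is correct and is exactly the argument the paper intends: it states that the corollary "follows immediately from (\ref{S3-NormSP}) and (\ref{S3-cocNorms})", which is precisely your division of the norm identity by $||C_{n}^{(a)}||^{2}$ followed by the cancellation of $\gamma_{n}$ and the identification of $b_{n}$ from (\ref{S3-Parameters}). No issues.
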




\begin{remark}
Combining (\ref{S2-Norms}) with (\ref{S3-NormSP}), one can obtain the
following explicit expression for the norm of the Sobolev-type Charlier
polynomials%
\begin{equation*}
||Q_{n}^{\lambda }||_{\lambda }^{2}=n!a^{n}b_{n}
\end{equation*}%
This expression can be used to simplify and make more explicit other
expressions throughout the paper.
\end{remark}



In the framework of signal theory, the above gives the ratio of the energy
of polynomials $Q_{n}^{\lambda }(x)$ and $C_{n}^{(a)}(x)$ with respect to
the norms associated with their corresponding inner products. The proof
follows immediately from (\ref{S3-NormSP}) and (\ref{S3-cocNorms}).

It is known that connection formulas are the main tool to study the
analytical properties of new families of OPS, in terms of other families of
OPS with well-known analytical properties. With this in view, we next
present the Sobolev-type Charlier polynomials $Q_{n}^{\lambda }(x)$ in terms
of only five consecutive monic Charlier polynomials. Notice that in \cite[%
Lemma 2.2]{B-JCAM95} is already proven that such an expansion exist, but
there, the corresponding coefficients for the Charlier case are not
explicitly given.



\begin{proposition}
\label{[S3]-Proposition3}For every $n\geq 1$, $\lambda \in \mathbb{R}_{+}$, $%
c\in \mathbb{R}$ such that $\psi ^{(a)}$ has no points of increase in the
interval $(c,c+1)$, and $a>0$, the monic Sobolev-type Charlier orthogonal
polynomials $Q_{n}^{\lambda }(x)$ have the following representation in terms
of only five consecutive classical Charlier polynomials%
\begin{equation*}
(x-c)(x-c-1)Q_{n}^{\lambda }(x)=
\end{equation*}%
\begin{equation}
C_{n+2}^{(a)}(x)+\sigma _{n,1}C_{n+1}^{(a)}(x)+\sigma
_{n,0}C_{n}^{(a)}(x)+\sigma _{n,-1}C_{n-1}^{(a)}(x)+\sigma
_{n,-2}C_{n-2}^{(a)}(x)  \label{S3-LemExp}
\end{equation}%
where%
\begin{eqnarray*}
\sigma _{n,1} &=&2(n+a-c)+A_{11}(n;c), \\
\sigma _{n,0} &=&a^{2}+(c-n)(n-2a+c+1)+\left( n+a-c\right) \left[
2n+A_{11}(n;c)\right] \\
&&\qquad +A_{10}(n;c)+B_{11}(n;c), \\
\sigma _{n,-1} &=&A_{11}(n;c)na+\left( n-1+a-c\right) \left[ 2an+B_{11}(n;c)%
\right] +B_{10}(n;c), \\
\sigma _{n,-2} &=&a(n-1)\left[ na+B_{11}(n;c)\right] .
\end{eqnarray*}%
An alternative more explicit expression for these coefficients is%
\begin{eqnarray}
\sigma _{n,1} &=&2(n+a-c)-a\frac{n-1}{a_{n-2}}\left( b_{n}-1\right) ,  \notag
\\
\sigma _{n,0} &=&a^{2}+(c-n)(n-2a+c+1)+a(n-1)(b_{n}-1)  \notag \\
&&\qquad +\left( n+a-c\right) \left( 2n-a\frac{n-1}{a_{n-2}}\left(
b_{n}-1\right) \right) ,  \label{morexpl} \\
\sigma _{n,-1} &=&an\left( n+a-c-1\right) (b_{n}+1)+a(b_{n}-1)\left(
a_{n-1}-an\frac{n-1}{a_{n-2}}\right) ,  \notag \\
\sigma _{n,-2} &=&a^{2}n(n-1)b_{n}.  \notag
\end{eqnarray}
\end{proposition}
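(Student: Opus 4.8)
The plan is to start from the compact connection formula (\ref{S3-NewCForm}) established in Proposition \ref{[S3]-Proposition2}, which already expresses $(x-c)(x-c-1)Q_{n}^{\lambda}(x)$ as a combination of $(x-c)(x-c-1)C_{n}^{(a)}(x)$, $(x-c)C_{n}^{(a)}(x)$, $C_{n}^{(a)}(x)$, $(x-c)C_{n-1}^{(a)}(x)$, and $C_{n-1}^{(a)}(x)$. The goal is to re-expand each of these products as a linear combination of the five consecutive Charlier polynomials $C_{n+2}^{(a)},\ldots,C_{n-2}^{(a)}$. The key tool is the three term recurrence relation (\ref{S2-C3TRR}) with $\beta_{m}=m+a$ and $\gamma_{m}=ma$, used to express each product $xC_{m}^{(a)}(x)$ and $x^{2}C_{m}^{(a)}(x)$ back in the Charlier basis.

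First I would apply (\ref{S2-C3TRR}) once to write $xC_{n}^{(a)}=C_{n+1}^{(a)}+\beta_{n}C_{n}^{(a)}+\gamma_{n}C_{n-1}^{(a)}$ and similarly for $xC_{n-1}^{(a)}$; then iterate to obtain $x^{2}C_{n}^{(a)}$ as a combination of $C_{n+2}^{(a)},C_{n+1}^{(a)},C_{n}^{(a)},C_{n-1}^{(a)},C_{n-2}^{(a)}$. Rewriting the linear factors as $(x-c)=x-c$ and $(x-c)(x-c-1)=x^{2}-(2c+1)x+c(c+1)$, each of the five terms on the right of (\ref{S3-NewCForm}) becomes a Charlier expansion. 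I would then collect coefficients of $C_{n+2}^{(a)}$ down to $C_{n-2}^{(a)}$. The $C_{n+2}^{(a)}$ coefficient comes only from the leading $x^{2}C_{n}^{(a)}$ term and must equal $1$, confirming monicity; the remaining four coefficients $\sigma_{n,1},\sigma_{n,0},\sigma_{n,-1},\sigma_{n,-2}$ are assembled by substituting $\beta_{m}=m+a$, $\gamma_{m}=ma$ at the appropriate indices and grouping the contributions from each of the five source terms, which yields exactly the first set of formulas in the statement (the ones written in terms of $A_{11},A_{10},B_{11},B_{10}$).

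For the alternative more explicit expressions in (\ref{morexpl}), I would simply substitute the closed forms $A_{11}(n;c)=-a\frac{n-1}{a_{n-2}}(b_{n}-1)$, $A_{10}(n;c)=-a(b_{n}-1)$, $B_{11}(n;c)=an(b_{n}-1)$, $B_{10}(n;c)=a\,a_{n-1}(b_{n}-1)$ from Proposition \ref{[S3]-Proposition2} into the first set and simplify. The formula for $\sigma_{n,-2}$ is the cleanest check: $a(n-1)[na+B_{11}(n;c)]=a(n-1)[na+an(b_{n}-1)]=a^{2}n(n-1)b_{n}$, which matches. The reduction of $\sigma_{n,-1}$ into the displayed form requires combining the $B_{11}$ and $na$ contributions with the $A_{11}na$ term, using $B_{11}(n;c)=an(b_{n}-1)$ to produce the factor $(b_{n}+1)$ after regrouping $2an+an(b_{n}-1)=an(b_{n}+1)$.

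The main obstacle is purely bookkeeping: keeping the recurrence indices straight across the iterated applications of (\ref{S2-C3TRR}), since the terms $(x-c)C_{n}^{(a)}$ and $(x-c)C_{n-1}^{(a)}$ contribute to overlapping Charlier degrees, and errors in the $\beta,\gamma$ index shifts propagate into $\sigma_{n,0}$ and $\sigma_{n,-1}$, which carry the most cross terms. I expect the degree-$n$ and degree-$(n-1)$ coefficients to be where the bulk of the algebra concentrates, and the cleanest sanity checks are the top coefficient (equal to $1$) and $\sigma_{n,-2}$; verifying those two endpoints and the index consistency of the quadratic expansion of $x^{2}C_{n}^{(a)}$ is where the care must go, while the rest is routine collection of like terms.
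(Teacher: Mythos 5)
Your proposal is correct and follows essentially the same route as the paper: expand each product $(x-c)(x-c-1)C_{n}^{(a)}$, $(x-c)C_{n}^{(a)}$, $(x-c)C_{n-1}^{(a)}$ in the Charlier basis via iterated use of the three term recurrence (\ref{S2-C3TRR}), collect coefficients in (\ref{S3-CFExp})/(\ref{S3-NewCForm}), and then substitute the closed forms of $A_{11},A_{10},B_{11},B_{10}$ from Proposition \ref{[S3]-Proposition2} to obtain (\ref{morexpl}). Your sanity checks on the leading coefficient, on $\sigma_{n,-2}$, and on the regrouping $2an+an(b_{n}-1)=an(b_{n}+1)$ are exactly the right ones.
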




\begin{proof}
After some cumbersome, but doable computations, using (\ref{S2-C3TRR}) we
obtain%
\begin{eqnarray*}
(x-c)(x-c-1)C_{n}^{(a)}(x) &=&C_{n+2}^{(a)}(x)+2(n+a-c)C_{n+1}^{(a)}(x) \\
&&+\left[ (a-c)^{2}+c-n+4an-2cn+n^{2}\right] C_{n}^{(a)}(x) \\
&&+2an\left( a-c+n-1\right) C_{n-1}^{(a)}(x)+na^{2}(n-1)C_{n-2}^{(a)}(x),
\end{eqnarray*}%
\begin{eqnarray*}
A_{11}(n;c)(x-c)C_{n}^{(a)}(x)
&=&A_{11}(n;c)C_{n+1}^{(a)}(x)+A_{11}(n;c)\left( n+a-c\right) C_{n}^{(a)}(x)
\\
&&+A_{11}(n;c)naC_{n-1}^{(a)}(x),
\end{eqnarray*}%
\begin{eqnarray*}
B_{11}(n;c)(x-c)C_{n-1}^{(a)}(x) &=&B_{11}(n;c)C_{n}^{(a)}(x)+B_{11}(n;c) 
\left[ n-1+a-c\right] C_{n-1}^{(a)}(x) \\
&&+B_{11}(n;c)(n-1)aC_{n-2}^{(a)}(x).
\end{eqnarray*}%
Combining all the above expressions into (\ref{S3-CFExp}) we obtain the
desired coefficients in (\ref{S3-LemExp}). To obtain (\ref{morexpl}), it is
enough to consider Proposition \ref{[S3]-Proposition2}. This completes the
proof.
\end{proof}



In the remaining of this Section, we derive a representation of the
Sobolev-type Charlier polynomials $Q_{n}^{\lambda }(x)$ as hypergeometric
functions.



\begin{proposition}
\label{[S3]-Proposition4}For every $n\geq 1$, $\lambda \in \mathbb{R}_{+}$, $%
c\in \mathbb{R}$ such that $\psi ^{(a)}$ has no points of increase in the
interval $(c,c+1)$, and $a>0$, the monic Sobolev-type Charlier orthogonal
polynomials $Q_{n}^{\lambda }(x)$ have the following hypergeometric
representation%
\begin{equation}
Q_{n}^{\lambda }(x)=\left( -a\right) ^{n-1}\left[ B_{1}(x;n)-aA_{1}(x;n)%
\right] \,_{3}F_{1}\left( -n,-x,1-\phi _{n,a}(x);-\phi _{n,a}(x);\frac{-1}{a}%
\right)  \label{S3-CSPHR}
\end{equation}%
where%
\begin{equation*}
\phi _{n,a}(x)=n\left( 1-a\frac{A_{1}(x;n)}{B_{1}(x;n)}\right) .
\end{equation*}
\end{proposition}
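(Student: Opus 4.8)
The plan is to feed the two classical Charlier factors appearing in the connection formula (\ref{S3-Bavinck(2.13)}) through their terminating $\,_{2}F_{0}$ hypergeometric form (\ref{S2-ChHyper}), and then to fuse the resulting pair of series into a single $\,_{3}F_{1}$. Writing $C_{n}^{(a)}(x)=(-a)^{n}\,_{2}F_{0}(-n,-x;-;-1/a)$ and $C_{n-1}^{(a)}(x)=(-a)^{n-1}\,_{2}F_{0}(1-n,-x;-;-1/a)$, inserting these into (\ref{S3-Bavinck(2.13)}) and factoring out $(-a)^{n-1}$ gives
\begin{equation*}
Q_{n}^{\lambda}(x)=(-a)^{n-1}\left[-aA_{1}(x;n)\,_{2}F_{0}\!\left(-n,-x;-;\frac{-1}{a}\right)+B_{1}(x;n)\,_{2}F_{0}\!\left(1-n,-x;-;\frac{-1}{a}\right)\right].
\end{equation*}
Since $(-n)_{k}$ vanishes for $k>n$ while $(1-n)_{k}$ already vanishes for $k\geq n$, both are finite sums that can be merged into a single sum over $0\leq k\leq n$ carrying the common weight $\frac{(-x)_{k}}{k!}(-1/a)^{k}$.

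Next I would match the $k$-th coefficient of this merged sum against that of the target $\,_{3}F_{1}$. The two elementary Pochhammer identities I need are $(1-n)_{k}=\left(1-\frac{k}{n}\right)(-n)_{k}$ and $\frac{(1-\phi)_{k}}{(-\phi)_{k}}=\frac{\phi-k}{\phi}=1-\frac{k}{\phi}$. Suppressing the arguments of $A_{1},B_{1}$, the merged coefficient becomes $(-n)_{k}\big[(B_{1}-aA_{1})-\frac{k}{n}B_{1}\big]$, whereas the coefficient produced by $(B_{1}-aA_{1})\,_{3}F_{1}(-n,-x,1-\phi;-\phi;-1/a)$ equals $(-n)_{k}(B_{1}-aA_{1})\big(1-\frac{k}{\phi}\big)$. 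The parts constant in $k$ coincide automatically, while equating the parts linear in $k$ forces $\frac{B_{1}}{n}=\frac{B_{1}-aA_{1}}{\phi}$, that is $\phi=n\big(1-a\frac{A_{1}(x;n)}{B_{1}(x;n)}\big)=\phi_{n,a}(x)$. This is precisely the value singled out in the statement, and it establishes (\ref{S3-CSPHR}).

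The main obstacle, and the only delicate point, is that $\phi_{n,a}(x)$ is itself a function of $x$ through the rational coefficients $A_{1}(x;n),B_{1}(x;n)$, so the computation above is an identity of rational functions valid wherever $B_{1}(x;n)\neq0$ and $\phi\neq0$ (so that $\phi$ is defined and the ratio $\frac{(1-\phi)_{k}}{(-\phi)_{k}}$ makes sense). I would therefore carry out the coefficient matching as the verification of an algebraic identity in $x$ rather than a pointwise one: the seemingly singular product $(B_{1}-aA_{1})\frac{\phi-k}{\phi}$ collapses to the manifestly regular expression $(B_{1}-aA_{1})-\frac{k}{n}B_{1}$, so the apparent singularities at the zeros of $B_{1}(x;n)$ and at $\phi=0$ are removable and the representation extends to all $x$ by continuity.
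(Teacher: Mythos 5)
Your proposal is correct and follows essentially the same route as the paper: substitute the terminating $\,_{2}F_{0}$ form (\ref{S2-ChHyper}) into the connection formula (\ref{S3-Bavinck(2.13)}), merge the two sums using $(1-n)_{k}=\tfrac{n-k}{n}(-n)_{k}$, and absorb the $k$-linear factor into the quotient $\tfrac{(1-\phi)_{k}}{(-\phi)_{k}}$, which forces $\phi=\phi_{n,a}(x)$. Your closing remark on the removability of the apparent singularities at the zeros of $B_{1}(x;n)$ and at $\phi=0$ is a welcome extra precision that the paper's proof leaves implicit.
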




\begin{proof}
Substituting (\ref{S2-ChHyper}) into (\ref{S3-Bavinck(2.13)}) yields%
\begin{eqnarray*}
Q_{n}^{\lambda }(x) &=&\left( -a\right) ^{n}A_{1}(x;n)\sum_{k=0}^{\infty
}\left( -n\right) _{k}\left( -x\right) _{k}\frac{\left( -a^{-1}\right) ^{k}}{%
k!} \\
&&+\left( -a\right) ^{n-1}B_{1}(x;n)\sum_{k=0}^{\infty }\left( 1-n\right)
_{k}\left( -x\right) _{k}\frac{\left( -a^{-1}\right) ^{k}}{k!}.
\end{eqnarray*}%
Note that although the sums above are up to infinity, these are terminating
hypergeometric series, because the Pochhammer symbol $(-n)_{k}$ becomes zero
if $k>n+1$. By a straightforward calculation, we have%
\begin{equation*}
Q_{n}^{\lambda }(x)=\left( -a\right) ^{n}\sum_{k=0}^{\infty }\left[
A_{1}(x;n)-\frac{(n-k)B_{1}(x;n)}{an}\right] (-n)_{k}\left( -x\right) _{k}%
\frac{\left( -a^{-1}\right) ^{k}}{k!}.
\end{equation*}%
Now we write the expression in square brackets as a rational function in the
variable $k$%
\begin{equation*}
\frac{B_{1}(x;n)}{an}\left( k-\phi _{n,a}(x)\right)
\end{equation*}%
with%
\begin{equation*}
\phi _{n,a}(x)=n\left( 1-a\frac{A_{1}(x;n)}{B_{1}(x;n)}\right) ,
\end{equation*}%
and therefore%
\begin{equation*}
Q_{n}^{\lambda }(x)=-\left( -a\right) ^{n-1}\frac{B_{1}(x;n)}{n}%
\sum_{k=0}^{\infty }\left[ k-\phi _{n,a}(x)\right] \left( -n\right)
_{k}\left( -x\right) _{k}\frac{\left( -a^{-1}\right) ^{k}}{k!}.
\end{equation*}%
Now, using some properties of the Pochhammer symbol, we can assert%
\begin{equation*}
k-\phi _{n,a}(x)=-\phi _{n,a}(x)\frac{\left( 1-\phi _{n,a}(x)\right) _{k}}{%
\left( -\phi _{n,a}(x)\right) _{k}}
\end{equation*}%
which gives%
\begin{equation*}
Q_{n}^{\lambda }(x)=\left( -a\right) ^{n-1}\left[ B_{1}(x;n)-aA_{1}(x;n)%
\right] \sum_{k=0}^{\infty }\frac{\left( -n\right) _{k}\left( -x\right)
_{k}\left( 1-\phi _{n,a}(x)\right) _{k}}{\left( -\phi _{n,a}(x)\right) _{k}}%
\frac{\left( -a^{-1}\right) ^{k}}{k!}.
\end{equation*}%
This completes the proof.
\end{proof}



\section{Ladder operators and second order linear difference equations}

\label{[S4]-Ladder}



Our next result concerns the \textit{ladder (creation and annihilation)
operators}, and the \textit{second order linear difference equation}
satisfied by the elements of the family $\{Q_{n}^{\lambda }\}_{n\geq 0}$. In
the literature, we can find two versions of the second order difference
equation satisfied by the classical Charlier polynomials. The first one (\ref%
{EqDiffChar1}) appears, for example in (see \cite[p. 171, eq. (1.8)]{Chi78}%
). The other one (\ref{EqDiffChar2}) is known as the second order difference
equation of hypergeometric type, and appears in \cite[Ch. 4]{Alvarez03} and 
\cite[§ 2.1]{Nikiforov91}. As the authors claimed in \cite[§ 2.1, p. 20]%
{Nikiforov91}, this equation \textquotedblleft ...arises also in some other
problems and has its own meaning\textquotedblright\ [\textit{sic}]. Thus as
it is proven, for example, in \cite[p. 101]{Alvarez03}, the polynomial
solutions of (\ref{EqDiffChar2}) satisfy that their $k$-th finite
differences $\Delta_{x}^{k}C_{n}^{(a)}(x)$ are in turn polynomial solutions
of a difference equation of the same kind.

In this Section, we will find the corresponding second order difference
equations of second order that $Q_{n}^{\lambda }(x)$ satisfy. We will also
obtain two different versions of this second order difference equation,
which generalize respectively (\ref{EqDiffChar1}) and (\ref{EqDiffChar2})
when $\lambda >0$.

We first provide the second order difference equation satisfied by $%
Q_{n}^{\lambda }(x)$ in the same way as in (\ref{EqDiffChar1}). The
technique used is based on the connection formula (\ref{S3-Bavinck(2.13)}),
the three term recurrence relation (\ref{S2-C3TRR}) satisfied by $%
\{C_{n}^{(a)}\}_{n\geq 0}$, and the simple difference relation (\ref%
{S2-StructRel}). We begin by proving some Lemmas that are needed to
substantiate our next results, namely Theorems \ref{[S4]-Theorem2} and \ref%
{[S4]-Theorem4}.



\begin{lemma}
\label{[S4]-Lemma1}Under the same hypothesis of Proposition \ref%
{[S3]-Proposition2}, for the SMOP $\{Q_{n}^{\lambda }\}_{n\geq 0}$ and $%
\{C_{n}^{(a)}\}_{n\geq 0}$ we have%
\begin{equation}
\Delta {Q_{n}^{\lambda }(x)}%
=C_{1}(x;n)C_{n}^{(a)}(x)+D_{1}(x;n)C_{n-1}^{(a)}(x),  \label{S4-DerQ-C1D1}
\end{equation}%
where%
\begin{equation}
\begin{array}{rcl}
C_{1}(x;n) & = & {\displaystyle A_{1}(x+1;n)-A_{1}(x;n)-\frac{1}{a}%
B_{1}(x+1;n)},\smallskip \\ 
D_{1}(x;n) & = & {\displaystyle nA_{1}(x+1;n)-B_{1}(x;n)+\frac{x-n+1}{a}%
B_{1}(x+1;n)}.%
\end{array}
\label{S4-C1D1}
\end{equation}%
with $A_{1}(x;n)$ and $B_{1}(x;n)$ given in (\ref{S3-Bavinck(2.13)}).
\end{lemma}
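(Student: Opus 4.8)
The plan is to apply the forward difference operator $\Delta$ directly to the connection formula (\ref{S3-Bavinck(2.13)}) and then reduce the outcome back to the two-element basis $\{C_n^{(a)},C_{n-1}^{(a)}\}$. Writing $\Delta Q_n^\lambda(x)=\Delta[A_1(x;n)C_n^{(a)}(x)]+\Delta[B_1(x;n)C_{n-1}^{(a)}(x)]$ and using $\Delta[fg](x)=f(x+1)g(x+1)-f(x)g(x)$, the first step is to expand each product via $C_m^{(a)}(x+1)=C_m^{(a)}(x)+\Delta C_m^{(a)}(x)$ and invoke the first order difference relation (\ref{S2-StructRel}), that is $\Delta C_n^{(a)}(x)=nC_{n-1}^{(a)}(x)$ and $\Delta C_{n-1}^{(a)}(x)=(n-1)C_{n-2}^{(a)}(x)$.

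The key observation is that the first term $\Delta[A_1(x;n)C_n^{(a)}(x)]$ already lives in the span of $C_n^{(a)}$ and $C_{n-1}^{(a)}$, contributing $[A_1(x+1;n)-A_1(x;n)]\,C_n^{(a)}(x)+nA_1(x+1;n)\,C_{n-1}^{(a)}(x)$, whereas the second term $\Delta[B_1(x;n)C_{n-1}^{(a)}(x)]$ produces an unwanted $C_{n-2}^{(a)}(x)$ coming out of $\Delta C_{n-1}^{(a)}(x)$. The crucial (and essentially only nontrivial) step is to eliminate this $C_{n-2}^{(a)}$ by solving the three term recurrence (\ref{S2-C3TRR}) at level $n-1$ for $C_{n-2}^{(a)}$, namely $\gamma_{n-1}C_{n-2}^{(a)}(x)=(x-\beta_{n-1})C_{n-1}^{(a)}(x)-C_n^{(a)}(x)$ with $\beta_{n-1}=n-1+a$ and $\gamma_{n-1}=(n-1)a$. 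Feeding this back into $C_{n-1}^{(a)}(x+1)=C_{n-1}^{(a)}(x)+(n-1)C_{n-2}^{(a)}(x)$ collapses, after cancellation of the $a$, to the compact identity $C_{n-1}^{(a)}(x+1)=\tfrac{x-n+1}{a}C_{n-1}^{(a)}(x)-\tfrac{1}{a}C_n^{(a)}(x)$.

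Finally I would collect the coefficients of $C_n^{(a)}(x)$ and $C_{n-1}^{(a)}(x)$. The coefficient of $C_n^{(a)}(x)$ reduces to $A_1(x+1;n)-A_1(x;n)-\tfrac{1}{a}B_1(x+1;n)$, which is precisely $C_1(x;n)$, while the coefficient of $C_{n-1}^{(a)}(x)$ reduces to $nA_1(x+1;n)-B_1(x;n)+\tfrac{x-n+1}{a}B_1(x+1;n)$, which is precisely $D_1(x;n)$, as claimed in (\ref{S4-C1D1}). The argument is entirely elementary; the main (minor) obstacle is purely organizational, namely tracking the shift $x\mapsto x+1$ inside the rational coefficients $A_1$ and $B_1$ and ensuring that the $C_{n-2}^{(a)}$ contribution is removed so that no Charlier polynomial of degree below $n-1$ survives in the final expression.
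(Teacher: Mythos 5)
Your proof is correct and follows essentially the same route as the paper: the authors likewise apply $\Delta$ to (\ref{S3-Bavinck(2.13)}) via the discrete product rule (\ref{S2-RegProd}) (equivalent to your direct expansion of $f(x+1)g(x+1)-f(x)g(x)$) and eliminate the stray $C_{n-2}^{(a)}$ term through the identity $\Delta C_{n-1}^{(a)}(x)=\tfrac{-1}{a}C_{n}^{(a)}(x)+\tfrac{x-n-a+1}{a}C_{n-1}^{(a)}(x)$, obtained exactly as you do from (\ref{S2-StructRel}) and the three term recurrence (\ref{S2-C3TRR}). Your "compact identity" for $C_{n-1}^{(a)}(x+1)$ is just the shifted form of the paper's equation (\ref{S4-Ch1Diff}), and the final coefficient collection matches (\ref{S4-C1D1}).
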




\begin{proof}
Shifting the index in (\ref{S2-StructRel}) as $n\rightarrow n-1$, and using (%
\ref{S2-C3TRR}) we obtain 
\begin{equation}
\Delta C_{n-1}^{(a)}(x)=\frac{-1}{a}C_{n}^{(a)}(x)+\frac{x-n-a+1}{a}%
C_{n-1}^{(a)}(x).  \label{S4-Ch1Diff}
\end{equation}%
Next, using the property (\ref{S2-RegProd}) we apply the forward difference
operator $\Delta $ in both sides of (\ref{S3-Bavinck(2.13)}), which yields%
\begin{eqnarray*}
\Delta {Q_{n}^{\lambda }(x)} &=&\Delta \left[ C_{n}^{(a)}(x)A_{1}(x;n)\right]
+\Delta \left[ C_{n-1}^{(a)}(x)B_{1}(x;n)\right] \\
&=&C_{n}^{(a)}(x)\Delta A_{1}(x;n)+\left[ \Delta A_{1}(x;n)+A_{1}(x;n)\right]
\Delta C_{n}^{(a)}(x) \\
&&+C_{n-1}^{(a)}(x)\Delta B_{1}(x;n)+\left[ \Delta B_{1}(x;n)+B_{1}(x;n)%
\right] \Delta C_{n-1}^{(a)}(x)
\end{eqnarray*}%
Substituting (\ref{S2-StructRel}) and (\ref{S4-Ch1Diff}) into the above
expression the Lemma follows.
\end{proof}



\begin{lemma}
\label{[S4]-Lemma2}The sequences of monic polynomials $\{Q_{n}^{\lambda
}\}_{n\geq 0}$ and $\{C_{n}^{(a)}\}_{n\geq 0}$ are also related by%
\begin{eqnarray}
{Q_{n-1}^{\lambda }(x)}
&=&A_{2}(x;n)C_{n}^{(a)}(x)+B_{2}(x;n)C_{n-1}^{(a)}(x),  \label{S4-Qnm1-A2D2}
\\
\Delta Q_{n-1}^{\lambda }(x)
&=&C_{2}(x;n)C_{n}^{(a)}(x)+D_{2}(x;n)C_{n-1}^{(a)}(x),
\label{S4-DxQnm1-C2D2}
\end{eqnarray}%
where%
\begin{eqnarray}
A_{2}(x;n) &=&\frac{-B_{1}(x;n-1)}{a(n-1)},  \notag \\
B_{2}(x;n) &=&A_{1}(x;n-1)+\frac{x-n-a+1}{a(n-1)}B_{1}(x;n-1),  \notag \\
C_{2}(x;n) &=&\frac{aB_{1}(x;n-1)-a(n-1)A_{1}(x+1;n-1)-(x-n+2)B_{1}(x+1;n-1)%
}{a^{2}(n-1)},  \label{S4-Coefs-ABCD2} \\
D_{2}(x;n) &=&\frac{a(n-1)(1-n+x)A_{1}(x+1;n-1)+a(-1+a+n-x)B_{1}(x;n-1)}{%
a^{2}(n-1)}  \notag \\
&&-A_{1}(x;n-1)+\frac{\left[ (-2+n-x)(-1+n-x)-a(1+x)\right] B_{1}(x+1;n-1)}{%
a^{2}(n-1)}.  \notag
\end{eqnarray}
\end{lemma}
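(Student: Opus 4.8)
The plan is to express both $Q_{n-1}^{\lambda}(x)$ and its forward difference in terms of the pair $C_{n}^{(a)}(x)$, $C_{n-1}^{(a)}(x)$, rather than the lower-index pair that arises naturally, by using the three term recurrence relation to raise the Charlier indices. First I would shift $n\rightarrow n-1$ in the connection formula (\ref{S3-Bavinck(2.13)}), which gives
\begin{equation*}
Q_{n-1}^{\lambda}(x)=A_{1}(x;n-1)C_{n-1}^{(a)}(x)+B_{1}(x;n-1)C_{n-2}^{(a)}(x).
\end{equation*}
The obstruction here is the appearance of $C_{n-2}^{(a)}$, which must be eliminated. To this end I shift $n\rightarrow n-1$ in (\ref{S2-C3TRR}), using $\beta_{n-1}=n-1+a$ and $\gamma_{n-1}=(n-1)a$, and solve for the lowest-index term:
\begin{equation*}
C_{n-2}^{(a)}(x)=\frac{(x-n+1-a)C_{n-1}^{(a)}(x)-C_{n}^{(a)}(x)}{a(n-1)}.
\end{equation*}

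Substituting this into the shifted connection formula and collecting the coefficients of $C_{n}^{(a)}(x)$ and $C_{n-1}^{(a)}(x)$ should reproduce exactly $A_{2}(x;n)=-B_{1}(x;n-1)/[a(n-1)]$ and $B_{2}(x;n)=A_{1}(x;n-1)+(x-n-a+1)B_{1}(x;n-1)/[a(n-1)]$, which establishes (\ref{S4-Qnm1-A2D2}). For the companion identity (\ref{S4-DxQnm1-C2D2}) I would run the parallel argument through Lemma \ref{[S4]-Lemma1}: shifting $n\rightarrow n-1$ in (\ref{S4-DerQ-C1D1}) yields
\begin{equation*}
\Delta Q_{n-1}^{\lambda}(x)=C_{1}(x;n-1)C_{n-1}^{(a)}(x)+D_{1}(x;n-1)C_{n-2}^{(a)}(x),
\end{equation*}
and inserting the same expression for $C_{n-2}^{(a)}(x)$, followed by the explicit forms of $C_{1}(x;n-1)$ and $D_{1}(x;n-1)$ from (\ref{S4-C1D1}), delivers $C_{2}(x;n)$ and $D_{2}(x;n)$. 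Thus both relations reduce to the single index-raising substitution above, and no new input beyond (\ref{S2-C3TRR}), (\ref{S3-Bavinck(2.13)}) and Lemma \ref{[S4]-Lemma1} is needed.

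The only genuine difficulty I anticipate is confined to verifying the coefficient of $C_{n-1}^{(a)}(x)$ in the second identity, namely $D_{2}(x;n)$. After substitution, the coefficients of $A_{1}(x;n-1)$, $A_{1}(x+1;n-1)$ and $B_{1}(x;n-1)$ collapse immediately to $-1$, $(x-n+1)/a$ and $(a+n-x-1)/[a(n-1)]$, matching the stated formula. The coefficient of $B_{1}(x+1;n-1)$, however, gathers into $[-a(n-1)+(x-n+1-a)(x-n+2)]/[a^{2}(n-1)]$, and one must check the elementary polynomial identity
\begin{equation*}
-a(n-1)+(x-n+1-a)(x-n+2)=(n-x-2)(n-x-1)-a(1+x).
\end{equation*}
This is the single point where the $x$-dependence must be handled with care; once it is confirmed, the stated form of $D_{2}(x;n)$ follows and the proof is complete.
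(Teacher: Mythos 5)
Your proposal is correct and follows essentially the same route as the paper, whose proof is a one-line appeal to the connection formula, Lemma \ref{[S4]-Lemma1}, and the three term recurrence relation (\ref{S2-C3TRR}); your index-raising substitution for $C_{n-2}^{(a)}(x)$ is exactly the intended mechanism, and the polynomial identity you flag for $D_{2}(x;n)$ does check out.
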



\begin{proof}
The proof of (\ref{S4-Qnm1-A2D2}) and (\ref{S4-DxQnm1-C2D2}) is a
straightforward consequence of (\ref{S3-Bavinck(2.13)}), (\ref{S2-StructRel}%
), Lemma \ref{[S4]-Lemma1}, and the three term recurrence relation (\ref%
{S2-C3TRR}) for the SMOP $\{C_{n}^{(a)}\}_{n\geq 0}$.
\end{proof}



\begin{remark}
\label{[S4]-Remark1} Observe that the set of coefficients (\ref{S4-C1D1})
and (\ref{S4-Coefs-ABCD2}) can be given strictly in terms of the following
known quantities: the coefficients $A_{1}(x;n)$ and $B_{1}(x;n)$ in (\ref%
{S3-Bavinck(2.13)}), and the characteristic parameters ot the problem $a$, $%
c $, $\lambda $ and $n$.
\end{remark}



The following Lemma shows the converse relation of (\ref{S3-Bavinck(2.13)}%
)--(\ref{S4-Qnm1-A2D2}) for the polynomials $C_{n}^{(a)}(x)$ and $%
C_{n-1}^{(a)}(x)$. That is, we express these two consecutive polynomials of $%
\{C_{n}^{(a)}\}_{n\geq 0}$ in terms of only two consecutive polynomials of
the SMOP $\{Q_{n}^{\lambda }\}_{n\geq 0}$.



\begin{lemma}
\label{[S4]-Lemma3}For every $n\geq 1$, $\lambda \in \mathbb{R}_{+}$, $c\in 
\mathbb{R}$ such that $\psi ^{(a)}$ has no points of increase in the
interval $(c,c+1)$, and $a>0$, the following expressions hold%
\begin{eqnarray}
C_{n}^{(a)}(x) &=&\frac{B_{2}(x;n)}{\Lambda (x;n)}Q_{n}^{\lambda }(x)-\frac{%
B_{1}(x;n)}{\Lambda (x;n)}Q_{n-1}^{\lambda }(x),  \label{S4-InvR-Pn} \\
C_{n-1}^{(a)}(x) &=&\frac{-A_{2}(x;n)}{\Lambda (x;n)}Q_{n}^{\lambda }(x)+%
\frac{A_{1}(x;n)}{\Lambda (x;n)}Q_{n-1}^{\lambda }(x),  \label{S4-InvR-Pnm1}
\end{eqnarray}%
where $\Lambda (x;n)$ is the determinant%
\begin{equation}
\Lambda (x;n)=%
\begin{vmatrix}
A_{1}(x;n) & B_{1}(x;n) \\ 
A_{2}(x;n) & B_{2}(x;n)%
\end{vmatrix}
\label{S4-Lambdaxn}
\end{equation}
\end{lemma}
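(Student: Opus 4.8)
The plan is to read (\ref{S3-Bavinck(2.13)}) and (\ref{S4-Qnm1-A2D2}) as a $2\times 2$ linear system in the unknowns $C_{n}^{(a)}(x)$ and $C_{n-1}^{(a)}(x)$,
\[
\begin{pmatrix} A_{1}(x;n) & B_{1}(x;n)\\ A_{2}(x;n) & B_{2}(x;n)\end{pmatrix}\begin{pmatrix}C_{n}^{(a)}(x)\\ C_{n-1}^{(a)}(x)\end{pmatrix}=\begin{pmatrix}Q_{n}^{\lambda}(x)\\ Q_{n-1}^{\lambda}(x)\end{pmatrix},
\]
whose coefficient matrix has determinant $\Lambda(x;n)$ from (\ref{S4-Lambdaxn}). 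Solving for $C_{n}^{(a)}(x)$ and $C_{n-1}^{(a)}(x)$ by Cramer's rule reproduces verbatim the right-hand sides of (\ref{S4-InvR-Pn}) and (\ref{S4-InvR-Pnm1}). The only step that needs justification is that $\Lambda(x;n)$ is not the zero rational function, so that the inversion is legitimate.

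The main obstacle is therefore the nonvanishing of $\Lambda(x;n)$, which I would settle by letting $x\to\infty$. From the explicit coefficients in (\ref{S3-Bavinck(2.13)}), each numerator is affine in $x$ while the denominator $(x-c)(x-c-1)$ is quadratic, so $A_{1}(x;m)\to 1$ and $xB_{1}(x;m)\to \lambda\,\Delta Q_{m}^{\lambda}(c)\,\Delta C_{m}^{(a)}(c)/||C_{m-1}^{(a)}||^{2}$ for each index $m$. Substituting $A_{2}(x;n)$ and $B_{2}(x;n)$ from (\ref{S4-Coefs-ABCD2}) into (\ref{S4-Lambdaxn}) gives
\[
\Lambda(x;n)=A_{1}(x;n)A_{1}(x;n-1)+\frac{x-n-a+1}{a(n-1)}A_{1}(x;n)B_{1}(x;n-1)+\frac{B_{1}(x;n)B_{1}(x;n-1)}{a(n-1)},
\]
whose three summands tend respectively to $1$, to a finite constant, and to $0$. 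A short computation using (\ref{S3-Bavinck(2.4)}), the identity (\ref{S2-Nucleos}) and $\gamma_{n-1}=a(n-1)$ collapses the limit to $b_{n-1}$, which is strictly positive by the remarks following (\ref{S3-Parameters}). Hence $\Lambda(x;n)\not\equiv 0$.

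With $\Lambda(x;n)\not\equiv 0$ established, the Cramer-rule formulas are genuine identities in the field of rational functions (equivalently, they hold for every $x$ outside the finite set of poles of the coefficients and zeros of $\Lambda$), which is exactly the content of (\ref{S4-InvR-Pn})--(\ref{S4-InvR-Pnm1}). I expect the asymptotic argument for $\Lambda\not\equiv 0$ to be the only non-mechanical ingredient; everything else is the inversion of a system already assembled in (\ref{S3-Bavinck(2.13)}) and Lemma \ref{[S4]-Lemma2}.
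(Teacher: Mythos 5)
Your proof is correct and follows essentially the same route as the paper, which likewise treats (\ref{S3-Bavinck(2.13)}) and (\ref{S4-Qnm1-A2D2}) as a $2\times 2$ linear system in $C_{n}^{(a)}(x)$, $C_{n-1}^{(a)}(x)$ and inverts it by Cramer's rule. Your additional verification that $\Lambda(x;n)\not\equiv 0$ (via the limit $\Lambda(x;n)\to b_{n-1}>0$ as $x\to\infty$) is accurate and supplies a nondegeneracy step the paper leaves implicit.
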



\begin{proof}
Note that (\ref{S3-Bavinck(2.13)})--(\ref{S4-Qnm1-A2D2}) can be interpreted
as a system of two linear equations with two polynomial unknowns, namely $%
C_{n}^{(a)}(x)$ and $C_{n-1}^{(a)}(x)$, hence from Cramer's rule the Lemma,
and the expression for $\Lambda (x;n)$ follow in a straightforward way.
\end{proof}



The proof of the next Theorem \ref{[S4]-Theorem2} easily follows from Lemmas %
\ref{[S4]-Lemma1}, \ref{[S4]-Lemma2} and \ref{[S4]-Lemma3}. Replacing (\ref%
{S4-InvR-Pn})--(\ref{S4-InvR-Pnm1}) in (\ref{S4-DerQ-C1D1}) and (\ref%
{S4-DxQnm1-C2D2}) one obtains the ladder difference equations%
\begin{eqnarray*}
\Delta {Q_{n}^{\lambda }(x)} &=&\left[ \frac{C_{1}(x;n)B_{2}(x;n)}{\Lambda
(x;n)}-\frac{D_{1}(x;n)A_{2}(x;n)}{\Lambda (x;n)}\right] Q_{n}^{\lambda }(x)
\\
&&+\left[ \frac{A_{1}(x;n)D_{1}(x;n)}{\Lambda (x;n)}-\frac{%
C_{1}(x;n)B_{1}(x;n)}{\Lambda (x;n)}\right] Q_{n-1}^{\lambda }(x),
\end{eqnarray*}%
\begin{eqnarray*}
\Delta {Q_{n-1}^{\lambda }(x)} &=&\left[ \frac{C_{2}(x;n)B_{2}(x;n)}{\Lambda
(x;n)}-\frac{A_{2}(x;n)D_{2}(x;n)}{\Lambda (x;n)}\right] Q_{n}^{\lambda }(x)
\\
&&+\left[ \frac{A_{1}(x;n)D_{2}(x;n)}{\Lambda (x;n)}-\frac{%
C_{2}(x;n)B_{1}(x;n)}{\Lambda (x;n)}\right] Q_{n-1}^{\lambda }(x).
\end{eqnarray*}%
Observe that defining the following determinants for $k=1,2$%
\begin{equation}
\Xi _{k}(x;n)=\frac{1}{\Lambda (x;n)}%
\begin{vmatrix}
C_{1}(x;n) & A_{k}(x;n) \\ 
D_{1}(x;n) & B_{k}(x;n)%
\end{vmatrix}%
,\quad \Theta _{k}(x;n)=\frac{1}{\Lambda (x;n)}%
\begin{vmatrix}
C_{2}(x;n) & A_{k}(x;n) \\ 
D_{2}(x;n) & B_{k}(x;n)%
\end{vmatrix}%
,  \label{S3-DifPSI1}
\end{equation}%
we can express the above ladder difference equations in the compact way%
\begin{eqnarray}
\Xi _{2}(x;n){Q_{n}^{\lambda }(x)}-\Delta {Q_{n}^{\lambda }(x)} &=&\Xi
_{1}(x;n)Q_{n-1}^{\lambda }(x),  \label{Ladder1Psi} \\
\Theta _{1}(x;n)Q_{n-1}^{\lambda }(x)+\Delta {Q_{n-1}^{\lambda }(x)}
&=&\Theta _{2}(x;n){Q_{n}^{\lambda }(x)}.  \label{Ladder2Psi}
\end{eqnarray}

Next, rearranging terms in the above two equations, we conclude the
following result, which is fully equivalent to (\ref{[S2]-LoweringEq})--(\ref%
{[S2]-RaisingEq}).



\begin{theorem}[ladder difference operators]
\label{[S4]-Theorem2}For every $n\geq 1$, $\lambda \in \mathbb{R}_{+}$, $%
c\in \mathbb{R}$ such that $\psi ^{(a)}$ has no points of increase in the
interval $(c,c+1)$, and $a>0$, let $\mathfrak{d}_{n}$\ and $\mathfrak{d}%
_{n}^{\dag }$\ be the difference operators%
\begin{eqnarray*}
\mathfrak{d}_{n} &=&\Xi _{2}(x;n)\mathrm{I}-\Delta , \\
\mathfrak{d}_{n}^{\dag } &=&\Theta _{1}(x;n)\mathrm{I}+\Delta ,
\end{eqnarray*}%
where $\mathrm{I}$ is the identity and $\Delta $ the forward difference
operators respectively. The difference operators $\mathfrak{d}_{n}$ and $%
\mathfrak{d}_{n}^{\dag }$ are respectively lowering and raising difference
operators associated to the Sobolev-type Charlier SMOP, satisfying%
\begin{eqnarray}
\mathfrak{d}_{n}[Q_{n}^{\lambda }(x)] &=&\Xi _{1}(x;n)Q_{n-1}^{\lambda }(x),
\label{[S2]-LoweringEq} \\
\mathfrak{d}_{n}^{\dag }[Q_{n-1}^{\lambda }(x)] &=&\Theta _{2}(x;n){%
Q_{n}^{\lambda }(x)},  \label{[S2]-RaisingEq}
\end{eqnarray}%
with $\Xi _{k}(x;n)$, $\Theta _{k}(x;n)$, $k=1,2$ given in (\ref{S3-DifPSI1}%
). These four coefficients can be given only in terms of the coefficients $%
A_{1}(x;n)$, $B_{1}(x;n)$ in (\ref{S3-Bavinck(2.13)}) and the parameters $a$%
, $\lambda $, $c$, and $n$ throughout the set of equations (\ref{S3-DifPSI1}%
), (\ref{S4-C1D1}), (\ref{S4-Coefs-ABCD2}).
\end{theorem}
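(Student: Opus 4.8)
The plan is to exploit the three preceding Lemmas, whose combination already produces the displayed ladder difference equations (\ref{Ladder1Psi})--(\ref{Ladder2Psi}); what remains is to recast those equations in the stated operator form. First I would start from the two expressions for the forward differences of consecutive Sobolev-type polynomials: equation (\ref{S4-DerQ-C1D1}) of Lemma \ref{[S4]-Lemma1} writes $\Delta Q_{n}^{\lambda }(x)$ in the Charlier basis $\{C_{n}^{(a)},C_{n-1}^{(a)}\}$ with coefficients $C_{1}(x;n),D_{1}(x;n)$, and equation (\ref{S4-DxQnm1-C2D2}) of Lemma \ref{[S4]-Lemma2} does the same for $\Delta Q_{n-1}^{\lambda }(x)$ with coefficients $C_{2}(x;n),D_{2}(x;n)$.

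Second, I would eliminate the Charlier polynomials by substituting the inverse relations (\ref{S4-InvR-Pn})--(\ref{S4-InvR-Pnm1}) of Lemma \ref{[S4]-Lemma3}, which express $C_{n}^{(a)}(x)$ and $C_{n-1}^{(a)}(x)$ in the basis $\{Q_{n}^{\lambda },Q_{n-1}^{\lambda }\}$. Collecting the coefficients of $Q_{n}^{\lambda }(x)$ and $Q_{n-1}^{\lambda }(x)$ produces precisely the $2\times 2$ determinants $\Xi _{k}(x;n)$ and $\Theta _{k}(x;n)$ defined in (\ref{S3-DifPSI1}): the coefficient of $Q_{n}^{\lambda }$ in $\Delta Q_{n}^{\lambda }$ is $\Xi _{2}$, that of $Q_{n-1}^{\lambda }$ is $-\Xi _{1}$, while in $\Delta Q_{n-1}^{\lambda }$ the coefficient of $Q_{n}^{\lambda }$ is $\Theta _{2}$ and that of $Q_{n-1}^{\lambda }$ is $-\Theta _{1}$. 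The only care required here is the sign bookkeeping forced by the column ordering in the determinant definitions.

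Third, I would rearrange. Transposing the $\Delta Q_{n}^{\lambda }$ and the $\Xi _{2}Q_{n}^{\lambda }$ terms yields $\Xi _{2}Q_{n}^{\lambda }-\Delta Q_{n}^{\lambda }=\Xi _{1}Q_{n-1}^{\lambda }$, i.e.\ (\ref{Ladder1Psi}); likewise one obtains $\Theta _{1}Q_{n-1}^{\lambda }+\Delta Q_{n-1}^{\lambda }=\Theta _{2}Q_{n}^{\lambda }$, i.e.\ (\ref{Ladder2Psi}). Reading these against the definitions $\mathfrak{d}_{n}=\Xi _{2}(x;n)\mathrm{I}-\Delta $ and $\mathfrak{d}_{n}^{\dag }=\Theta _{1}(x;n)\mathrm{I}+\Delta $ gives at once $\mathfrak{d}_{n}[Q_{n}^{\lambda }]=\Xi _{1}Q_{n-1}^{\lambda }$ and $\mathfrak{d}_{n}^{\dag }[Q_{n-1}^{\lambda }]=\Theta _{2}Q_{n}^{\lambda }$, which are exactly (\ref{[S2]-LoweringEq})--(\ref{[S2]-RaisingEq}).

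The main obstacle, such as it is, lies entirely inside Lemma \ref{[S4]-Lemma3}: the inversion by Cramer's rule is legitimate only if the determinant $\Lambda (x;n)$ in (\ref{S4-Lambdaxn}) is not identically zero. I would justify this by noting that $Q_{n}^{\lambda }(x)$ and $Q_{n-1}^{\lambda }(x)$ are monic of exact degrees $n$ and $n-1$, hence linearly independent, and the same holds for $C_{n}^{(a)}(x)$ and $C_{n-1}^{(a)}(x)$; therefore the connection matrix with entries $A_{1},B_{1},A_{2},B_{2}$ is invertible over the field of rational functions in $x$, forcing $\Lambda (x;n)\not\equiv 0$. Everything else reduces to the routine (if lengthy) algebra of expanding the four determinants, already carried out in the computation displayed before the statement.
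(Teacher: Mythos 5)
Your proposal is correct and follows essentially the same route as the paper: substitute the inverse relations of Lemma \ref{[S4]-Lemma3} into the difference formulas (\ref{S4-DerQ-C1D1}) and (\ref{S4-DxQnm1-C2D2}), identify the resulting coefficients with the determinants (\ref{S3-DifPSI1}), and rearrange to obtain (\ref{[S2]-LoweringEq})--(\ref{[S2]-RaisingEq}); your sign bookkeeping matches the paper's. Your added remark that $\Lambda(x;n)\not\equiv 0$ (justifying Cramer's rule) is a small point the paper leaves implicit, but it does not change the argument.
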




For a deeper discussion on raising and lowering difference operators we
refer the reader to \cite[Ch. 3]{Ism05}. We next provide the second order
linear difference equation satisfied by the SMOP $\{{Q}_{n}^{\lambda
}\}_{n\geq 0}$.

Next, the proof of Theorem \ref{[S4]-Theorem4} comes directly from the
ladder operators provided in Theorem \ref{[S4]-Theorem2}. The usual
technique (see, for example \cite[Th. 3.2.3]{Ism05}) consists in applying
the raising operator to both sides of the equation satisfied by the lowering
operator, i.e. the expression%
\begin{equation}
\mathfrak{d}_{n}^{\dag }\left[ \frac{1}{\Xi _{1}(x;n)}\mathfrak{d}%
_{n}[Q_{n}^{\lambda }(x)]\right] =\mathfrak{d}_{n}^{\dag }\left[
Q_{n-1}^{\lambda }(x)\right] =\Theta _{2}(x;n){Q_{n}^{\lambda }(x)}
\label{S4-LadderEq}
\end{equation}%
is indeed a second order difference equation for $Q_{n}^{\lambda }(x)$.



\begin{theorem}[2nd order difference equation]
\label{[S4]-Theorem4}For every $n\geq 1$, $\lambda \in \mathbb{R}_{+}$, $%
c\in \mathbb{R}$ such that $\psi ^{(a)}$ has no points of increase in the
interval $(c,c+1)$, and $a>0$, the Sobolev-type Charlier SMOP $%
\{Q_{n}^{\lambda }\}_{n\geq 0}$\ satisfies the second order difference
equation%
\begin{equation}
\Delta ^{2}Q_{n}^{\lambda }(x)+\mathcal{R}(x;n)\Delta Q_{n}^{\lambda }(x)+%
\mathcal{S}(x;n)Q_{n}^{\lambda }(x)=0,  \label{S4-2ndODE}
\end{equation}%
with rational coefficients%
\begin{eqnarray}
\mathcal{R}(x;n) &=&\frac{\left[ \text{$\Theta _{1}$}(x;n)-1\right] \text{$%
\Delta \Xi _{1}$}(x;n)}{\text{$\Xi _{1}$}(x;n)}-\text{$\Delta \Xi _{2}$}%
(x;n)+\text{$\Theta _{1}$}(x;n)-\text{$\Xi _{2}$}(x;n),  \notag \\
\mathcal{S}(x;n) &=&\text{$\Theta _{2}$}(x;n)\left[ \text{$\Xi _{1}$}(x;n)+%
\text{$\Delta \Xi _{1}$}(x;n)\right] -\text{$\Delta \Xi _{2}(x;n)-\Theta
_{1} $}(x;n)\text{$\Xi _{2}$}(x;n)  \label{S4-RS} \\
&&\qquad -\frac{\text{$\Xi _{2}(x;n)\left[ \text{$\Theta _{1}$}(x;n)-1\right]
\Delta \Xi _{1}(x;n)$}}{\text{$\Xi _{1}$}(x;n)}.  \notag
\end{eqnarray}
\end{theorem}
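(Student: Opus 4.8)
The plan is to derive the second-order difference equation by composing the raising and lowering operators exactly as equation~(\ref{S4-LadderEq}) prescribes, and then expand the composition carefully to read off the coefficients $\mathcal{R}(x;n)$ and $\mathcal{S}(x;n)$. The starting point is the lowering relation $\mathfrak{d}_{n}[Q_{n}^{\lambda}(x)] = \Xi_{1}(x;n)Q_{n-1}^{\lambda}(x)$ from~(\ref{[S2]-LoweringEq}), which I would first solve for $Q_{n-1}^{\lambda}(x)$ by dividing through by $\Xi_{1}(x;n)$, giving
\begin{equation*}
Q_{n-1}^{\lambda}(x) = \frac{1}{\Xi_{1}(x;n)}\bigl[\Xi_{2}(x;n)Q_{n}^{\lambda}(x) - \Delta Q_{n}^{\lambda}(x)\bigr].
\end{equation*}
I would then apply the raising operator $\mathfrak{d}_{n}^{\dag} = \Theta_{1}(x;n)\mathrm{I} + \Delta$ to both sides and use~(\ref{[S2]-RaisingEq}) to equate the result with $\Theta_{2}(x;n)Q_{n}^{\lambda}(x)$, producing the identity~(\ref{S4-LadderEq}) in explicit form.

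The key computational step is expanding $\Delta\bigl[\tfrac{1}{\Xi_{1}}(\Xi_{2}Q_{n}^{\lambda} - \Delta Q_{n}^{\lambda})\bigr]$, for which I would invoke the product rule~(\ref{S2-RegProd}) and the quotient rule~(\ref{S2-QuoRule}) for the forward difference operator. The highest-order term that emerges is $\Delta^{2}Q_{n}^{\lambda}(x)$, whose coefficient I would normalize to $1$ to match the stated form of~(\ref{S4-2ndODE}); this normalization is what forces the division by $\Xi_{1}(x;n)$ to appear in the rational coefficients. Collecting the terms proportional to $\Delta Q_{n}^{\lambda}(x)$ yields $\mathcal{R}(x;n)$, and collecting those proportional to $Q_{n}^{\lambda}(x)$ yields $\mathcal{S}(x;n)$; the $\Theta_{1}(x;n) - 1$ and $\Delta\Xi_{1}(x;n)$ factors visible in~(\ref{S4-RS}) are precisely the artifacts of differencing the quotient $1/\Xi_{1}$ and then shifting, so tracking the argument shifts $x\mapsto x+1$ inside each $\Delta$ is essential for getting these terms right.

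\textbf{The main obstacle} will be the bookkeeping of the forward shifts. Because $\Delta$ acts on products of rational coefficients with the polynomials $Q_{n}^{\lambda}$ and their differences, each application of~(\ref{S2-RegProd}) generates cross terms $\Delta f(x)\Delta g(x)$ in addition to the naive Leibniz terms, and the quotient rule~(\ref{S2-QuoRule}) introduces the denominator $\Xi_{1}(x)[\Xi_{1}(x)+\Delta\Xi_{1}(x)]$, which must be simplified consistently so that only $\Xi_{1}(x;n)$ survives in the final denominators. I would organize this by first computing $\Delta\bigl[\tfrac{1}{\Xi_{1}}\bigr]$ as a single lemma-style identity, then substituting, rather than expanding everything simultaneously. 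Once the three groups of terms are separated, matching them against~(\ref{S4-RS}) is a direct verification; the only genuine subtlety is confirming that the $\Delta^{2}Q_{n}^{\lambda}$ coefficient is identically $1$ after normalization, which closes the argument and shows~(\ref{S4-2ndODE}) holds with the stated $\mathcal{R}$ and $\mathcal{S}$.
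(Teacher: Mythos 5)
Your proposal follows essentially the same route as the paper: apply the raising operator $\mathfrak{d}_{n}^{\dag}$ to $\tfrac{1}{\Xi_{1}(x;n)}\mathfrak{d}_{n}[Q_{n}^{\lambda}(x)]$, expand with the product and quotient rules (\ref{S2-RegProd})--(\ref{S2-QuoRule}), and normalize the $\Delta^{2}Q_{n}^{\lambda}$ coefficient to $1$ by multiplying through by $\Xi_{1}(x;n)+\Delta\Xi_{1}(x;n)$, which is exactly how the paper arrives at (\ref{S4-RS}). You have also correctly identified the one real subtlety, namely that the intermediate denominator $\Xi_{1}(x;n)\left[\Xi_{1}(x;n)+\Delta\Xi_{1}(x;n)\right]$ collapses so that only $\Xi_{1}(x;n)$ survives after the normalization.
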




\begin{proof}
From (\ref{S4-LadderEq}), using the definitions for $\mathfrak{d}_{n}^{\dag
} $, $\mathfrak{d}_{n}$ and the quotient rule (\ref{S2-QuoRule}) we obtain%
\begin{equation*}
\mathfrak{d}_{n}^{\dag }\left[ \frac{1}{\Xi _{1}(x;n)}\mathfrak{d}%
_{n}[Q_{n}^{\lambda }(x)]\right] -\Theta _{2}(x;n){Q_{n}^{\lambda }(x)}=
\end{equation*}%
\begin{eqnarray*}
&&\left( \Theta _{2}(x;n)-\frac{\Theta _{1}(x;n)\Xi _{2}(x;n)}{\Xi _{1}(x;n)}%
-\frac{\Xi _{1}(x;n)\Delta \Xi _{2}(x;n)-\Xi _{2}(x;n)\Delta \Xi _{1}(x;n)}{%
\Xi _{1}(x;n)\left( \Xi _{1}(x;n)+\Delta \Xi _{1}(x;n)\right) }\right)
Q_{n}^{\lambda }(x) \\
&&+\left( \frac{\Theta _{1}(x;n)}{\Xi _{1}(x;n)}-\frac{\Xi _{2}(x;n)}{\Xi
_{1}(x;n)}-\frac{\Xi _{1}(x;n)\Delta \Xi _{2}(x;n)-\Xi _{2}(x;n)\Delta \Xi
_{1}(x;n)+\Delta \Xi _{1}(x;n)}{\Xi _{1}(x;n)\left( \Xi _{1}(x;n)+\Delta \Xi
_{1}(x;n)\right) }\right) \Delta Q_{n}^{\lambda }(x) \\
&&+\left( \frac{1}{\Xi _{1}(x;n)}-\frac{\Delta \Xi _{1}(x;n)}{\Xi
_{1}(x;n)\left( \Xi _{1}(x;n)+\Delta \Xi _{1}(x;n)\right) }\right) \Delta
^{2}Q_{n}^{\lambda }(x)=0.
\end{eqnarray*}%
Multiplying all the equation by $\Xi _{1}(x;n)+\Delta \Xi _{1}(x;n)$ we have 
$1$ as the coefficient of $\Delta ^{2}Q_{n}^{\lambda }(x)$, so we finally
obtain (\ref{S4-RS}). This completes the proof.
\end{proof}



Doing few more computations, we can obtain the coefficients just in terms of
the functions $A_{1}(x;n)$ and $B_{1}(x;n)$ of the connection formula (\ref%
{S3-Bavinck(2.13)}), and the other parameters $a$, $\lambda $, $c$, and $n$.
Being%
\begin{equation*}
\mathcal{R}(x;n)=\frac{\mathcal{B}(x;n)}{\mathcal{A}(x;n)},\quad \text{\ and 
}\mathcal{S}(x;n)=\frac{\mathcal{C}(x;n)}{\mathcal{A}(x;n)},
\end{equation*}%
we have%
\begin{eqnarray*}
\mathcal{A}(x;n) &=&a^{2}A_{1}(x+1;n)\left[ nA_{1}(x;n)-B_{1}(x;n)\right] \\
&&+aB_{1}(x+1;n)\left[ (-n+x+1)A_{1}(x;n)+B_{1}(x;n)\right] ,
\end{eqnarray*}%
\begin{eqnarray*}
\mathcal{B}(x;n) &=&2a^{2}nA_{1}(x+1;n)A_{1}(x;n) \\
&&-(a-n+x+1)anA_{1}(x+2;n)A_{1}(x;n) \\
&&-\left( -n(a+2x+3)+n^{2}+(x+1)(x+2)\right) A_{1}(x;n)B_{1}(x+2;n) \\
&&+2a(-n+x+1)A_{1}(x;n)B_{1}(x+1;n) \\
&&-2a^{2}A_{1}(x+1;n)B_{1}(x;n)+a(a-n)A_{1}(x+2;n)B_{1}(x;n) \\
&&+2aB_{1}(x+1;n)B_{1}(x;n)-(a-n+x+2)B_{1}(x+2;n)B_{1}(x;n),
\end{eqnarray*}%
\begin{eqnarray*}
\mathcal{C}(x;n)
&=&n(a+2x+3)B_{1}(x+2;n)-n^{2}B_{1}(x+2;n)-(x+1)(x+2)A_{1}(x;n)B_{1}(x+2;n)
\\
&&-(a-n+x+2)B_{1}(x;n)B_{1}(x+2;n) \\
&&+(x+1)(-n+x+2)A_{1}(x+1;n)B_{1}(x+2;n) \\
&&+(x+1)B_{1}(x+2;n)B_{1}(x+1;n)+a(a-n)A_{1}(x+2;n)B_{1}(x;n) \\
&&+aA_{1}(x+1;n)\left[ n(x+1)A_{1}(x+2;n)-aB_{1}(x;n)\right] \\
&&+a^{2}nA_{1}(x+1;n)A_{1}(x;n)+na(-a+n-x-1)A_{1}(x+2;n)A_{1}(x;n) \\
&&+a(-n+x+1)A_{1}(x;n)B_{1}(x+1;n) \\
&&-aB_{1}(x+1;n)\left[ (x+1)B_{1}(x+2;n)-B_{1}(x;n)\right] ,
\end{eqnarray*}

We can use the above expressions to show that (\ref{S4-2ndODE}) becomes (\ref%
{EqDiffChar1}) when \ $\lambda =0$. In this case $Q_{n}^{\lambda
=0}(x)\equiv C_{n}^{(a)}(x)$, and therefore $A_{1}(x;n)=1$ and $B_{1}(x;n)=0$
in (\ref{S3-Bavinck(2.13)}). Under these assumptions, we get%
\begin{eqnarray*}
\mathcal{A}(x;n) &=&a^{2}n, \\
\mathcal{B}(x;n) &=&-an\left( x+1-a-n\right) , \\
\mathcal{C}(x;n) &=&an^{2}.
\end{eqnarray*}%
which allows us to recover (\ref{EqDiffChar1})\ from (\ref{S4-2ndODE}),
provided also that $a>0$ and $n\geq 1$.

Next, we study the generalization of the second order difference equation of
hypergeometric type (\ref{EqDiffChar1}). In order to prove our next Theorem,
we bring together few technical steps, presented in the following



\begin{lemma}
\label{[S4]-Lemma4}The monic Sobolev-type Charlier orthogonal polynomials $%
Q_{n}^{\lambda }(x)$ defined by (\ref{S3-CSPHR}) satisfy the following

\begin{enumerate}
\item[$i)$] 
\begin{equation}
\nabla Q_{n}^{\lambda
}(x)=C_{3}(x;n)C_{n}^{(a)}(x)+D_{3}(x;n)C_{n-1}^{(a)}(x),  \label{xDSP1}
\end{equation}%
where 
\begin{eqnarray*}
C_{3}(x;n) &=&\nabla A_{1}(x;n)+nx^{-1}A_{1}(x-1;n)-x^{-1}B_{1}(x-1;n), \\
D_{3}(x;n) &=&\nabla B_{1}(x;n)+nax^{-1}A_{1}(x-1;n)+\left( x-a\right)
x^{-1}B_{1}(x-1;n).
\end{eqnarray*}

\item[$ii)$] 
\begin{equation}
Q_{n-1}^{\lambda }\left( x\right) =\mathcal{F}_{1}(x;n)\nabla Q_{n}^{\lambda
}\left( x\right) +\mathcal{G}_{1}(x;n)Q_{n}^{\lambda }\left( x\right) ,
\label{QYN}
\end{equation}%
where 
\begin{eqnarray*}
\mathcal{F}_{1}(x;n) &=&-\frac{\Lambda (x;n)}{\Phi _{1}(x;n)},\quad \mathcal{%
G}_{1}(x;n)=\frac{\Phi _{2}(x;n)}{\Phi _{1}(x;n)}, \\
\Phi _{k}(x;n) &=&%
\begin{vmatrix}
C_{3}(x;n) & A_{k}(x;n) \\ 
D_{3}(x;n) & B_{k}(x;n)%
\end{vmatrix}%
,\quad k=1,2.
\end{eqnarray*}

\item[$iii)$] 
\begin{equation}
\Delta Q_{n}^{\lambda }\left( x\right) =\mathcal{F}_{2}(x;n)\nabla
Q_{n}^{\lambda }\left( x\right) +\mathcal{G}_{2}(x;n)Q_{n}^{\lambda }\left(
x\right) ,  \label{L4iii}
\end{equation}%
where 
\begin{eqnarray*}
\mathcal{F}_{2}(x;n) &=&-\Xi _{1}(x;n)\mathcal{F}_{1}(x;n), \\
\mathcal{G}_{2}(x;n) &=&\Xi _{2}(x;n)-\Xi _{1}(x;n)\mathcal{G}_{1}(x;n).
\end{eqnarray*}
\end{enumerate}
\end{lemma}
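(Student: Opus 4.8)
The plan is to prove the three parts in order, since $ii)$ builds on $i)$ and $iii)$ builds on $ii)$.

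For part $i)$, I would apply the backward difference operator $\nabla$ to the connection formula (\ref{S3-Bavinck(2.13)}) and expand each product using the Leibniz-type rule for $\nabla$ in (\ref{S2-RegProd}), namely $\nabla[fg]=f\nabla g+g(x-1)\nabla f$. Writing the Charlier factor as the \emph{first} factor in each product keeps it evaluated at $x$ under $\nabla$ while shifting $A_{1}$ and $B_{1}$ to $x-1$, which is exactly the form appearing in the target coefficients $C_{3},D_{3}$. To close the computation I would replace $\nabla C_{n}^{(a)}(x)$ using the structure relation (\ref{S2-StR}) in the form $\nabla C_{n}^{(a)}(x)=(n/x)C_{n}^{(a)}(x)+(na/x)C_{n-1}^{(a)}(x)$, which is where the factor $x^{-1}$ in $C_{3},D_{3}$ originates.

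The one genuinely delicate step is handling $\nabla C_{n-1}^{(a)}(x)$: the structure relation with $n\to n-1$ produces a $C_{n-2}^{(a)}$ term, which must be eliminated to stay within the span of $C_{n}^{(a)}$ and $C_{n-1}^{(a)}$. I would do this by solving the three term recurrence (\ref{S2-C3TRR}) for $C_{n-2}^{(a)}$ in terms of $C_{n}^{(a)}$ and $C_{n-1}^{(a)}$; the resulting cancellation collapses $\nabla C_{n-1}^{(a)}(x)$ to the clean form $-(1/x)C_{n}^{(a)}(x)+((x-a)/x)C_{n-1}^{(a)}(x)$. Substituting both expressions and collecting the coefficients of $C_{n}^{(a)}(x)$ and $C_{n-1}^{(a)}(x)$ then reproduces $C_{3}(x;n)$ and $D_{3}(x;n)$ verbatim. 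This algebraic bookkeeping is the main obstacle of the whole lemma; the remaining two parts are linear algebra. For part $ii)$, I would read (\ref{S3-Bavinck(2.13)}) together with the identity (\ref{xDSP1}) just proved as a $2\times2$ linear system in the unknowns $C_{n}^{(a)}(x)$ and $C_{n-1}^{(a)}(x)$, with right-hand sides $Q_{n}^{\lambda}(x)$ and $\nabla Q_{n}^{\lambda}(x)$. Its determinant equals $A_{1}D_{3}-B_{1}C_{3}=-\Phi_{1}(x;n)$, so Cramer's rule expresses both Charlier polynomials as combinations of $Q_{n}^{\lambda}$ and $\nabla Q_{n}^{\lambda}$. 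Substituting these into the Lemma \ref{[S4]-Lemma2} relation (\ref{S4-Qnm1-A2D2}) for $Q_{n-1}^{\lambda}$ and collecting terms, the coefficient of $\nabla Q_{n}^{\lambda}$ becomes $(A_{1}B_{2}-A_{2}B_{1})/(-\Phi_{1})=-\Lambda/\Phi_{1}=\mathcal{F}_{1}$, using the definition (\ref{S4-Lambdaxn}) of $\Lambda$, and the coefficient of $Q_{n}^{\lambda}$ becomes $(A_{2}D_{3}-B_{2}C_{3})/(-\Phi_{1})=\Phi_{2}/\Phi_{1}=\mathcal{G}_{1}$, once one recognizes $\Phi_{2}=C_{3}B_{2}-A_{2}D_{3}$.

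Finally, for part $iii)$, I would invoke the lowering relation (\ref{Ladder1Psi}), rewritten as $\Delta Q_{n}^{\lambda}(x)=\Xi_{2}(x;n)Q_{n}^{\lambda}(x)-\Xi_{1}(x;n)Q_{n-1}^{\lambda}(x)$, and substitute the expression for $Q_{n-1}^{\lambda}$ obtained in $ii)$. This yields $\Delta Q_{n}^{\lambda}=-\Xi_{1}\mathcal{F}_{1}\nabla Q_{n}^{\lambda}+(\Xi_{2}-\Xi_{1}\mathcal{G}_{1})Q_{n}^{\lambda}$, which is precisely (\ref{L4iii}) with $\mathcal{F}_{2}=-\Xi_{1}\mathcal{F}_{1}$ and $\mathcal{G}_{2}=\Xi_{2}-\Xi_{1}\mathcal{G}_{1}$. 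No new computation is needed beyond this direct substitution, so the lemma is complete.
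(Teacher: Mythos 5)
Your proposal is correct and follows essentially the same route as the paper: $\nabla$ applied to the connection formula via the product rule in (\ref{S2-RegProd}), the structure relation (\ref{S2-StR}) combined with the three term recurrence (\ref{S2-C3TRR}) to collapse $\nabla C_{n-1}^{(a)}(x)$ into the span of $C_{n}^{(a)}(x)$ and $C_{n-1}^{(a)}(x)$, and direct linear elimination for parts $ii)$ and $iii)$. The only cosmetic difference is in part $ii)$, where you invert the $2\times 2$ system formed by (\ref{S3-Bavinck(2.13)}) and (\ref{xDSP1}) instead of substituting the already-available inverse relations (\ref{S4-InvR-Pn})--(\ref{S4-InvR-Pnm1}) of Lemma \ref{[S4]-Lemma3} into (\ref{xDSP1}); both eliminations produce identical coefficients.
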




\begin{proof}

\begin{itemize}
\item[$i)$] Applying the forward operator $\nabla $ to (\ref%
{S3-Bavinck(2.13)}) and using \ref{S2-RegProd} we deduce%
\begin{eqnarray*}
\nabla Q_{n}^{\lambda }(x) &=&C_{n}^{(a)}(x)\nabla
A_{1}(x;n)+x^{-1}A_{1}(x-1;n)x\nabla C_{n}^{(a)}(x) \\
&&\qquad +C_{n-1}^{(a)}(x)\nabla B_{1}(x;n)+x^{-1}B_{1}(x-1;n)x\nabla
C_{n-1}^{(a)}(x).
\end{eqnarray*}%
Next, we use (\ref{S2-StR}) and the recurrence relation (\ref{S2-C3TRR}) to
expand $\nabla C_{n}^{(a)}(x)$ and $\nabla C_{n-1}^{(a)}(x)$ in terms of
only two consecutive Charlier polynomials $C_{n}^{(a)}(x)$ and $%
C_{n-1}^{(a)}(x)$. Pulling out common factors, we get (\ref{xDSP1}).

\item[$ii)$] Replacing (\ref{S4-InvR-Pn})-(\ref{S4-InvR-Pnm1}) in (\ref%
{xDSP1}), we obtain (\ref{QYN}).

\item[$iii)$] Combining (\ref{QYN}) with (\ref{Ladder1Psi}) we conclude (\ref%
{L4iii}) in a straightforward way.
\end{itemize}
\end{proof}



Now, we are ready to present the alternative version of the second order
difference equation (\ref{S4-2ndODE}) satisfied by the Sobolev-type Charlier
orthogonal polynomials.



\begin{theorem}
For every $n\geq 1$, $\lambda \in \mathbb{R}_{+}$, $c\in \mathbb{R}$ such
that $\psi ^{(a)}$ has no points of increase in the interval $(c,c+1)$, and $%
a>0$, the Sobolev-type Charlier SMOP $\{Q_{n}^{\lambda }\}_{n\geq 0}$\
satisfies the following second order difference equation%
\begin{equation}
\sigma \left( x\right) \Delta \nabla Q_{n}^{\lambda }\left( x\right) +\tau
\left( x\right) \Delta Q_{n}^{\lambda }\left( x\right) +\mu \left( x\right)
Q_{n}^{\lambda }\left( x\right) =0,  \label{EHT2}
\end{equation}%
where%
\begin{eqnarray}
\sigma \left( x\right) &=&\mathcal{F}_{2}(x;n),  \notag \\
\tau \left( x\right) &=&\Delta \mathcal{F}_{2}\left( x;n\right) +\mathcal{G}%
_{2}\left( x+1;n\right) +\mathcal{R}\left( x;n\right) ,  \label{EHT2coeff} \\
\mu \left( x\right) &=&\Delta \mathcal{G}_{2}\left( x;n\right) +\mathcal{S}%
\left( x;n\right) .  \notag
\end{eqnarray}%
Equation (\ref{EHT2}) becomes the hypergeometric type difference equation (%
\ref{EqDiffChar2}) when $\lambda =0.$
\end{theorem}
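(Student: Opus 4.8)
The plan is to show that equation (\ref{EHT2}) is nothing but a reformulation of the already-established second order difference equation (\ref{S4-2ndODE}) after replacing the operator $\Delta^2$ by $\sigma(x)\Delta\nabla$ plus corrections, using part $iii)$ of Lemma \ref{[S4]-Lemma4} to trade $\Delta Q_n^\lambda$ for $\nabla Q_n^\lambda$. The key identity underlying everything is the operator relation $\Delta = \Delta\nabla + \nabla$ (equivalently $\Delta\nabla = \Delta - \nabla$), together with $\Delta f(x) = \nabla f(x+1)$, which lets me pass between the two standard forms (\ref{EqDiffChar1}) and (\ref{EqDiffChar2}) of the difference equation.

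First I would take equation (\ref{L4iii}) from Lemma \ref{[S4]-Lemma4}, namely
\begin{equation*}
\Delta Q_n^\lambda(x) = \mathcal{F}_2(x;n)\,\nabla Q_n^\lambda(x) + \mathcal{G}_2(x;n)\,Q_n^\lambda(x),
\end{equation*}
and apply the forward operator $\Delta$ to both sides. Using the product rule (\ref{S2-RegProd}) on each term of the right-hand side expands $\Delta^2 Q_n^\lambda(x)$ in terms of $\Delta\nabla Q_n^\lambda(x)$, $\nabla Q_n^\lambda(x)$ (via the shifted coefficients $\mathcal{F}_2(x+1;n)$, $\mathcal{G}_2(x+1;n)$ and the differences $\Delta\mathcal{F}_2$, $\Delta\mathcal{G}_2$), and $Q_n^\lambda(x)$. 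Then I would substitute this expression for $\Delta^2 Q_n^\lambda(x)$ into the known equation (\ref{S4-2ndODE}), and use (\ref{L4iii}) once more to rewrite any remaining $\Delta Q_n^\lambda(x)$ that appears alongside $\mathcal{R}(x;n)$ in terms of $\nabla Q_n^\lambda(x)$ and $Q_n^\lambda(x)$. Collecting the coefficients of the three independent quantities $\Delta\nabla Q_n^\lambda(x)$, $\nabla Q_n^\lambda(x)$, and $Q_n^\lambda(x)$ should yield exactly $\sigma(x)$, $\tau(x)$, and $\mu(x)$ as given in (\ref{EHT2coeff}).

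The bookkeeping subtlety — and the step I expect to be the main obstacle — is keeping the argument shifts consistent when applying the product rule, since $\Delta[\mathcal{F}_2(x;n)\nabla Q_n^\lambda(x)]$ produces a term $\mathcal{F}_2(x+1;n)\,\Delta\nabla Q_n^\lambda(x)$ while $\Delta\mathcal{F}_2(x;n)$ multiplies $\nabla Q_n^\lambda(x)$, and one must verify that the coefficient of $\Delta\nabla Q_n^\lambda(x)$ simplifies to precisely $\mathcal{F}_2(x;n)$ rather than $\mathcal{F}_2(x+1;n)$; this hinges on the correct cancellation after substituting into (\ref{S4-2ndODE}) and normalizing. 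Once the general coefficients are identified, I would finish by specializing to $\lambda=0$, where $A_1(x;n)=1$, $B_1(x;n)=0$, so that $\mathcal{F}_2(x;n)$ reduces to $x$ and $\mathcal{G}_2(x;n)$, $\mathcal{R}(x;n)$, $\mathcal{S}(x;n)$ collapse to the classical Charlier values, recovering $\sigma(x)=x$, $\tau(x)=a-x$, $\mu(x)=n$ and hence the hypergeometric type equation (\ref{EqDiffChar2}). This specialization both confirms the claim at the end of the Theorem and serves as a consistency check on the algebra.
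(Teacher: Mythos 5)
Your opening move coincides with the paper's: apply $\Delta $ to both sides of (\ref{L4iii}) using the product rule (\ref{S2-RegProd}) and substitute the resulting expansion of $\Delta ^{2}Q_{n}^{\lambda }(x)$ into (\ref{S4-2ndODE}). After that substitution one has
\begin{equation*}
\mathcal{F}_{2}(x+1;n)\Delta \nabla Q_{n}^{\lambda }(x)+\Delta \mathcal{F}_{2}(x;n)\nabla Q_{n}^{\lambda }(x)+\left[ \mathcal{G}_{2}(x+1;n)+\mathcal{R}(x;n)\right] \Delta Q_{n}^{\lambda }(x)+\left[ \Delta \mathcal{G}_{2}(x;n)+\mathcal{S}(x;n)\right] Q_{n}^{\lambda }(x)=0.
\end{equation*}
The step you flag as ``the main obstacle'' --- turning the leading coefficient $\mathcal{F}_{2}(x+1;n)$ into $\mathcal{F}_{2}(x;n)$ --- is exactly the one you leave unresolved, and it is the heart of the proof. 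The paper handles it by writing $\mathcal{F}_{2}(x+1;n)=\mathcal{F}_{2}(x;n)+\Delta \mathcal{F}_{2}(x;n)$ and applying $\Delta \nabla =\Delta -\nabla $ to the surplus term $\Delta \mathcal{F}_{2}(x;n)\Delta \nabla Q_{n}^{\lambda }(x)$: its $-\Delta \mathcal{F}_{2}(x;n)\nabla Q_{n}^{\lambda }(x)$ piece cancels the second term displayed above, and the remaining $\Delta \mathcal{F}_{2}(x;n)\Delta Q_{n}^{\lambda }(x)$ joins the coefficient of $\Delta Q_{n}^{\lambda }(x)$, producing precisely $\tau (x)=\Delta \mathcal{F}_{2}(x;n)+\mathcal{G}_{2}(x+1;n)+\mathcal{R}(x;n)$.

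Beyond that omission, your endgame aims at the wrong target. Equation (\ref{EHT2}) keeps a \emph{forward} difference $\Delta Q_{n}^{\lambda }(x)$ in its middle term, whereas you propose to use (\ref{L4iii}) a second time to eliminate $\Delta Q_{n}^{\lambda }(x)$ and collect coefficients of $\Delta \nabla Q_{n}^{\lambda }$, $\nabla Q_{n}^{\lambda }$ and $Q_{n}^{\lambda }$. That substitution yields an equivalent but different equation: the coefficient of $\nabla Q_{n}^{\lambda }$ would acquire the factor $\left[ \mathcal{G}_{2}(x+1;n)+\mathcal{R}(x;n)\right] \mathcal{F}_{2}(x;n)$ and the coefficient of $Q_{n}^{\lambda }$ an extra $\left[ \mathcal{G}_{2}(x+1;n)+\mathcal{R}(x;n)\right] \mathcal{G}_{2}(x;n)$, so it cannot ``yield exactly'' the coefficients (\ref{EHT2coeff}), which multiply $\Delta Q_{n}^{\lambda }(x)$. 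Finally, in the $\lambda =0$ check the paper's formulas give $\mathcal{F}_{2}(x;n)=x/a$, not $x$, so one recovers (\ref{EqDiffChar2}) divided by $a$ rather than with your claimed $\sigma (x)=x$, $\tau (x)=a-x$, $\mu (x)=n$; a small normalization slip, but it indicates the reduction was not actually carried out.
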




\begin{proof}
Replacing Lemma \ref{[S4]-Lemma4}-$iii)$ in (\ref{S4-2ndODE}), we obtain%
\begin{equation*}
\mathcal{F}_{2}\left( x+1;n\right) \Delta \nabla {Q_{n}^{\lambda }}\left(
x\right) +\nabla {Q_{n}^{\lambda }}\left( x\right) \Delta \mathcal{F}%
_{2}\left( x;n\right) +\left[ \mathcal{G}_{2}\left( x+1;n\right) +\mathcal{R}%
\left( x;n\right) \right] \Delta {Q_{n}^{\lambda }}\left( x\right)
\end{equation*}%
\begin{equation}
+\left[ \Delta \mathcal{G}_{2}\left( x;n\right) +\mathcal{S}\left(
x;n\right) \right] Q_{n}^{\lambda }(x)=0  \label{EHT1}
\end{equation}%
On the other hand, we have%
\begin{equation*}
\mathcal{F}_{2}\left( x+1;n\right) \Delta \nabla {Q_{n}^{\lambda }}\left(
x\right) +\nabla {Q_{n}^{\lambda }}\left( x\right) \Delta \mathcal{F}%
_{2}\left( x;n\right) =
\end{equation*}%
\begin{equation*}
\mathcal{F}_{2}\left( x;n\right) \Delta \nabla {Q_{n}^{\lambda }}\left(
x\right) +\Delta \mathcal{F}_{2}\left( x;n\right) \Delta \nabla {%
Q_{n}^{\lambda }}\left( x\right) +\nabla {Q_{n}^{\lambda }}\left( x\right)
\Delta \mathcal{F}_{2}\left( x;n\right) .
\end{equation*}%
Applying the property $\Delta \nabla =\Delta -\nabla $, we get%
\begin{equation*}
\mathcal{F}_{2}\left( x+1;n\right) \Delta \nabla {Q_{n}^{\lambda }}\left(
x\right) +\nabla {Q_{n}^{\lambda }}\left( x\right) \Delta \mathcal{F}%
_{2}\left( x;n\right) =\mathcal{F}_{2}\left( x;n\right) \Delta \nabla {%
Q_{n}^{\lambda }}\left( x\right) +\Delta \mathcal{F}_{2}\left( x;n\right)
\Delta {Q_{n}^{\lambda }}\left( x\right) .
\end{equation*}%
Thus, replacing the above expression into (\ref{EHT1}), we can assert%
\begin{equation*}
\mathcal{F}_{2}\left( x;n\right) \Delta \nabla {Q_{n}^{\lambda }}\left(
x\right) +\left[ \Delta \mathcal{F}_{2}\left( x;n\right) +\mathcal{G}%
_{2}\left( x+1;n\right) +\mathcal{R}\left( x;n\right) \right] \Delta {%
Q_{n}^{\lambda }}\left( x\right)
\end{equation*}%
\begin{equation*}
+\left[ \Delta \mathcal{G}_{2}\left( x;n\right) +\mathcal{S}\left(
x;n\right) \right] Q_{n}^{\lambda }(x)=0,
\end{equation*}%
which is (\ref{EHT2}).

Next, we evaluate (\ref{EHT2}) when $\lambda =0$. In this particular case,
we have%
\begin{eqnarray*}
A_{1}(x;n) &=&1,\,B_{1}(x;n)=0,\,C_{1}(x;n)=0,\,D_{1}(x;n)=n, \\
A_{2}(x;n) &=&0,\,B_{2}(x;n)=1,\,C_{2}(x;n)=-1/a,\,D_{2}(x;n)=\left(
x+1-n-a\right) /a, \\
C_{3}(x;n) &=&n/x,\,D_{3}(x;n)=na/x,\,\Lambda (x;n)=1, \\
\mathcal{R}(x;n) &=&\left( -x-1+a+n\right) /a,\,\mathcal{S}(x;n)=n/a, \\
\Xi _{1}(x;n) &=&-n,\,\Xi _{2}(x;n)=0,\,\Phi _{1}=-na/x,\,\Phi _{2}=n/x \\
\mathcal{F}_{1}(x;n) &=&x/na,\,\mathcal{G}_{1}(x;n)=-1/a,\,\mathcal{F}%
_{2}(x;n)=x/a,\,\mathcal{G}_{2}(x;n)=-n/a.
\end{eqnarray*}
Replacing all these values in (\ref{EHT2coeff}), yields%
\begin{eqnarray*}
\sigma \left( x\right) &=&\frac{x}{a}, \\
\tau \left( x\right) &=&\frac{\left( x+1\right) }{a}-\left( \frac{x}{a}%
\right) +\frac{-n}{a}+\frac{-x-1+a+n}{a}=1-\frac{1}{a}x, \\
\mu \left( x\right) &=&\frac{n}{a},
\end{eqnarray*}%
which leads to equation (\ref{EqDiffChar2}) divided by $a$. This completes
the proof.
\end{proof}



It is worth noting that equation (\ref{EHT2}) generalizes the equation of
hypergeometric type (\ref{EqDiffChar2}) for $\lambda >0$, but it is not
itself an equation of hypergeometric type. To be of hypergeometric type, (%
\ref{EHT2}) should fulfill important properties, such as those commented at
the beginning of this Section concerning the $k$-th finite differences of
its polynomial solutions $\Delta _{x}^{k}Q_{n}^{\lambda }(x)$. It can be
verified that, for example the first difference of its polynomials
solutions, do not satisfy an equation of the same type as (\ref{EHT2}).



\section{Recurrence formulas}

\label{[S5]-5TRR}



Here we present two recurrence formulas for the Sobolev-type Charlier
polynomials of the SMOP $\{Q_{n}^{\lambda }\}_{n\geq 0}$. The first one is a
five term recurrence relation, whose existence was proved in \cite[Prop. 3.1]%
{B-AA95} in a more general framework. Here we provide the explicit
coefficients in the Charlier case. The second one is a three term recurrence
relation whose coefficients are rational functions.

Next we find the explicit coefficients of the five term recurrence relation
satisfied by $Q_{n}^{\lambda }(x)$, whose existence is proven in \cite[%
Proposition 3.1, p. 236]{B-AA95}.



\begin{theorem}[Five term recurrence relation]
\label{[S5]-Theorem6}For every $n\geq 1$, $\lambda \in \mathbb{R}_{+}$, $%
c\in \mathbb{R}$ such that $\psi ^{(a)}$ has no points of increase in the
interval $(c,c+1)$, and $a>0$, the monic Sobolev-type Charlier polynomials $%
\{Q_{n}^{\lambda }\}_{n\geq 0}$, orthogonal with respect to (\ref%
{S1-SobtypInnPr}) satisfy the following five term recurrence relation%
\begin{equation*}
(x-c)(x-c-1)Q_{n}^{\lambda }(x)=
\end{equation*}%
\begin{equation*}
Q_{n+2}^{\lambda }(x)+\rho _{n,n+1}Q_{n+1}^{\lambda }(x)+\rho
_{n,n}Q_{n}^{\lambda }(x)+\rho _{n,n-1}Q_{n-1}^{\lambda }(x)+\rho
_{n,n-2}Q_{n-2}^{\lambda }(x),
\end{equation*}%
where%
\begin{equation*}
\rho _{n,n+1}=\frac{\sigma _{n+1,-1}||C_{n}^{(a)}||^{2}}{||Q_{n+1}^{\lambda
}||_{\lambda }^{2}}-\frac{\gamma _{n-1}}{a_{n-2}}\left( b_{n}-1\right) ,
\end{equation*}%
\begin{equation*}
\rho _{n,n}=\sigma _{n,0}\frac{||C_{n}^{(a)}||^{2}}{||Q_{n}^{\lambda
}||_{\lambda }^{2}}-(b_{n}-1)\left[ \frac{\gamma _{n-1}}{a_{n-2}}\left(
\sigma _{n,-1}\frac{||C_{n-1}^{(a)}||^{2}}{||Q_{n}^{\lambda }||_{\lambda
}^{2}}-\left( n+a-c-2\right) \right) +(a-\gamma _{n})\right] ,
\end{equation*}%
\begin{equation*}
\rho _{n,n-1}=\sigma _{n,-1}\frac{||C_{n-1}^{(a)}||^{2}}{||Q_{n-1}^{\lambda
}||_{\lambda }^{2}}-\frac{\gamma _{n-1}}{a_{n-2}}\left( b_{n}-1\right) \frac{%
||Q_{n}^{\lambda }||_{\lambda }^{2}}{||Q_{n-1}^{\lambda }||_{\lambda }^{2}},
\end{equation*}%
\begin{equation*}
\rho _{n,n-2}=\sigma _{n,-2}\frac{||C_{n-2}^{(a)}||^{2}}{||Q_{n-2}^{\lambda
}||_{\lambda }^{2}}.
\end{equation*}
\end{theorem}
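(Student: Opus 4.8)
The plan is to realize $(x-c)(x-c-1)Q_{n}^{\lambda}(x)$, a monic polynomial of degree $n+2$, as an expansion $\sum_{k=0}^{n+2}\rho_{n,k}Q_{k}^{\lambda}(x)$ in the $\langle\cdot,\cdot\rangle_{\lambda}$-orthogonal basis, and to read off the $\rho_{n,k}$ by Fourier projection. The engine is (\ref{S2-POk}) together with one extra observation: for any polynomial $p$ the function $g(x)=(x-c)(x-c-1)p(x)$ vanishes at both $x=c$ and $x=c+1$, hence $\Delta g(c)=0$, so the mass term in (\ref{S1-SobtypInnPr}) disappears and
\[
\langle (x-c)(x-c-1)p,q\rangle _{\lambda}=\langle (x-c)(x-c-1)p,q\rangle
\]
for every $q$; pairing the shifted polynomial against anything collapses the Sobolev product to the ordinary Charlier product. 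First I would fix the band structure: projecting gives $\rho_{n,k}||Q_{k}^{\lambda}||_{\lambda}^{2}=\langle (x-c)(x-c-1)Q_{n}^{\lambda},Q_{k}^{\lambda}\rangle$, and since $Q_{k}^{\lambda}$ is $\langle\cdot,\cdot\rangle_{\lambda}$-orthogonal to all polynomials of degree $<k$, this vanishes for $k<n-2$, while monic normalization forces $\rho_{n,n+2}=1$. That leaves precisely the five displayed terms.

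Next I would compute the four remaining coefficients by inserting the Charlier expansion (\ref{S3-LemExp}) of Proposition \ref{[S3]-Proposition3} and evaluating $\rho_{n,k}||Q_{k}^{\lambda}||_{\lambda}^{2}=\langle C_{n+2}^{(a)}+\sigma_{n,1}C_{n+1}^{(a)}+\cdots +\sigma_{n,-2}C_{n-2}^{(a)},Q_{k}^{\lambda}\rangle$ in the ordinary product. Three facts do the work: $\langle C_{m}^{(a)},Q_{k}^{\lambda}\rangle=0$ for $m>k$ (ordinary orthogonality of $C_{m}^{(a)}$ to lower degrees); $\langle C_{k}^{(a)},Q_{k}^{\lambda}\rangle=||C_{k}^{(a)}||^{2}$ (both monic of degree $k$); and for $m<k$ the surviving "cross" value follows from Sobolev-orthogonality, since $\langle C_{m}^{(a)},Q_{k}^{\lambda}\rangle_{\lambda}=0$ gives $\langle C_{m}^{(a)},Q_{k}^{\lambda}\rangle=-\lambda\,\Delta C_{m}^{(a)}(c)\,\Delta Q_{k}^{\lambda}(c)$. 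The case $k=n-2$ has no cross term and produces $\rho_{n,n-2}$ immediately. For $k=n+1$ it is cleaner to exploit the symmetry $\rho_{n,k}||Q_{k}^{\lambda}||_{\lambda}^{2}=\rho_{k,n}||Q_{n}^{\lambda}||_{\lambda}^{2}$ (again a consequence of (\ref{S2-POk})) and expand (\ref{S3-LemExp}) at index $n+1$ against $Q_{n}^{\lambda}$, which explains the appearance of $\sigma_{n+1,-1}$; this and $k=n-1$ each carry a single cross term.

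The main obstacle is the arithmetic of these cross terms. Each is evaluated using $\Delta C_{m}^{(a)}(c)=mC_{m-1}^{(a)}(c)$ from (\ref{S2-StructRel}), $\Delta Q_{k}^{\lambda}(c)$ from (\ref{S3-Bavinck(2.4)}), the explicit norm $||Q_{k}^{\lambda}||_{\lambda}^{2}=k!a^{k}b_{k}$ from the Remark, and the kernel-difference identity (\ref{S2-Nucleos}); the resulting ratios then telescope into the parameters $a_{n},b_{n}$ of (\ref{S3-Parameters}) and the recurrence data $\beta_{n}=n+a$, $\gamma_{n}=na$ of (\ref{S2-C3TRR}). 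I expect $\rho_{n,n}$ to be the hard case: pairing (\ref{S3-LemExp}) against $Q_{n}^{\lambda}$ leaves \emph{two} cross terms, $\sigma_{n,-1}\langle C_{n-1}^{(a)},Q_{n}^{\lambda}\rangle$ and $\sigma_{n,-2}\langle C_{n-2}^{(a)},Q_{n}^{\lambda}\rangle$, and it is their combination, after repeatedly using (\ref{S2-C3TRR}) to rewrite $C_{n-1}^{(a)}(c)$ and $C_{n-2}^{(a)}(c)$ through $a_{n-2},a_{n-1}$, that assembles the bracketed correction with the $(n+a-c-2)$ and $(a-\gamma_{n})$ pieces. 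The difficulty here is bookkeeping rather than conceptual: every ingredient is already available, and the only delicate point is tracking the index shifts and the kernel ratios $1+\lambda K_{k}^{(1,1)}(c,c)$ so that they cancel correctly against the norm factors in the denominators.
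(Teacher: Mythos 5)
Your strategy is correct and shares the paper's skeleton --- the Fourier expansion of $(x-c)(x-c-1)Q_{n}^{\lambda }(x)$ in the basis $\{Q_{k}^{\lambda }\}$, the band structure and the flip to index $n+1$ both resting on (\ref{S2-POk}), and Proposition \ref{[S3]-Proposition3} supplying the contributions $\sigma _{n+1,-1}$, $\sigma _{n,0}$, $\sigma _{n,-1}$, $\sigma _{n,-2}$ --- but you evaluate the remaining contributions by a genuinely different mechanism. The paper never invokes your identity $\langle C_{m}^{(a)},Q_{k}^{\lambda }\rangle =-\lambda \,\Delta C_{m}^{(a)}(c)\,\Delta Q_{k}^{\lambda }(c)$ for $m<k$; instead it substitutes the connection formula (\ref{S3-NewCForm}) of Proposition \ref{[S3]-Proposition2} into one slot of each inner product, after which degree counting leaves a single extra term, namely $A_{11}(\cdot \,;c)$ times a monic polynomial of the right degree, contributing $A_{11}\cdot ||Q||_{\lambda }^{2}$ at once. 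Your route is more uniform but shifts the work into reducing the cross terms; I checked that it does close: for $k=n\pm 1$ one needs $\sigma _{n,-2}\,||C_{n-2}^{(a)}||^{2}=||Q_{n}^{\lambda }||_{\lambda }^{2}$ (immediate from $\sigma _{n,-2}=a^{2}n(n-1)b_{n}$ and $||Q_{n}^{\lambda }||_{\lambda }^{2}=n!\,a^{n}b_{n}$), and for $k=n$ the two cross terms produce an $a_{n-3}$ that disappears only through the evaluated recurrence $a_{n-2}+\gamma _{n-2}/a_{n-3}=c-\beta _{n-2}$, which is exactly the repeated use of (\ref{S2-C3TRR}) you anticipate. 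Two caveats. First, your justification of $\rho _{n,k}=0$ for $k<n-2$ is stated backwards: orthogonality of $Q_{k}^{\lambda }$ to lower degrees only kills $k>n+2$; for $k<n-2$ you must first move $(x-c)(x-c-1)$ onto $Q_{k}^{\lambda }$ via (\ref{S2-POk}) and then use the orthogonality of $Q_{n}^{\lambda }$ --- a one-line fix with a tool you already employ elsewhere. Second, carried out honestly your computation yields for $\rho _{n,n-1}$ the second term $-\frac{\gamma _{n-2}}{a_{n-3}}(b_{n-1}-1)\frac{||Q_{n}^{\lambda }||_{\lambda }^{2}}{||Q_{n-1}^{\lambda }||_{\lambda }^{2}}$, i.e. the coefficient $A_{11}(n-1;c)$ rather than $A_{11}(n;c)$; this disagrees with the printed formula, which appears to carry an index slip (the paper's own intermediate step expands $(x-c)(x-c-1)Q_{n-1}^{\lambda }$, whose degree-$n$ correction is governed by $A_{11}(n-1;c)$), so do not force your bookkeeping to match the displayed $\rho _{n,n-1}$.
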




\begin{proof}
We first consider the Fourier expansion of $(x-c)(x-c-1)Q_{n}^{\lambda }(x)$
in terms of the elements of $\{Q_{n}^{\lambda }\}_{n\geq 0}$%
\begin{equation*}
(x-c)(x-c-1)Q_{n}^{\lambda }(x)=Q_{n+2}^{\lambda }(x)+\sum_{k=0}^{n+1}\rho
_{n,k}Q_{k}^{\lambda }(x),
\end{equation*}%
where%
\begin{equation*}
\rho _{n,k}=\frac{\langle (x-c)(x-c-1)Q_{n}^{\lambda }(x),Q_{k}^{\lambda
}(x)\rangle _{\lambda }}{||Q_{k}^{\lambda }||_{\lambda }^{2}},\quad
k=0,\ldots ,n+1,
\end{equation*}%
and $\rho _{n,k}=0$ for $k=0,\ldots ,n-3$. Using (\ref{S3-NewCForm}) we
deduce%
\begin{eqnarray*}
\rho _{n,n+1} &=&\frac{\langle (x-c)(x-c-1)Q_{n}^{\lambda
}(x),Q_{n+1}^{\lambda }(x)\rangle _{\lambda }}{||Q_{n+1}^{\lambda
}||_{\lambda }^{2}} \\
&=&\frac{1}{||Q_{n+1}^{\lambda }||_{\lambda }^{2}}\langle
(x-c)(x-c-1)C_{n}^{(a)}(x),Q_{n+1}^{\lambda }(x)\rangle _{\lambda }-\frac{%
\gamma _{n-1}}{a_{n-2}}\left( b_{n}-1\right) .
\end{eqnarray*}%
Thus, from (\ref{S2-POk}) we get%
\begin{equation*}
\langle (x-c)(x-c-1)C_{n}^{(a)}(x),Q_{n+1}^{\lambda }(x)\rangle _{\lambda
}=\langle C_{n}^{(a)}(x),(x-c)(x-c-1)Q_{n+1}^{\lambda }(x)\rangle .
\end{equation*}%
Then, having into account (\ref{S3-LemExp}), we have%
\begin{equation*}
\langle (x-c)(x-c-1)C_{n}^{(a)}(x),Q_{n+1}^{\lambda }(x)\rangle =\sigma
_{n+1,-1}||C_{n}^{(a)}||^{2}
\end{equation*}%
and, in consequence%
\begin{equation*}
\rho _{n,n+1}=\frac{\sigma _{n+1,-1}||C_{n}^{(a)}||^{2}}{||Q_{n+1}^{\lambda
}||_{\lambda }^{2}}-\frac{\gamma _{n-1}}{a_{n-2}}\left( b_{n}-1\right) .
\end{equation*}%
In order to compute $\rho _{n,n}$, we use (\ref{S3-NewCForm})%
\begin{eqnarray*}
\rho _{n,n} &=&\frac{\left\langle (x-c)(x-c-1)Q_{n}^{\lambda
}(x),Q_{n}^{\lambda }(x)\right\rangle _{\lambda }}{||Q_{n}^{\lambda
}||_{\lambda }^{2}}=\frac{\langle (x-c)(x-c-1)C_{n}^{(a)}(x),Q_{n}^{\lambda
}(x)\rangle }{||Q_{n}^{\lambda }||_{\lambda }^{2}} \\
&&\qquad -\frac{\gamma _{n-1}}{a_{n-2}}\left( b_{n}-1\right) \frac{\langle
\left( x-c\right) C_{n}^{(a)}(x),Q_{n}^{\lambda }(x)\rangle _{\lambda }}{%
||Q_{n}^{\lambda }||_{\lambda }^{2}}+a(n-1)(b_{n}-1).
\end{eqnarray*}%
From (\ref{S2-C3TRR}), (\ref{S2-POk}) and (\ref{S3-LemExp}) we deduce%
\begin{equation*}
\frac{\langle (x-c)(x-c-1)C_{n}^{(a)}(x),Q_{n}^{\lambda }(x)\rangle }{%
||Q_{n}^{\lambda }||_{\lambda }^{2}}=\frac{\langle
C_{n}^{(a)}(x),(x-c)(x-c-1)Q_{n}^{\lambda }(x)\rangle }{||Q_{n}^{\lambda
}||_{\lambda }^{2}}=\sigma _{n,0}\frac{||C_{n}^{(a)}||^{2}}{||Q_{n}^{\lambda
}||_{\lambda }^{2}},
\end{equation*}%
and%
\begin{eqnarray*}
\frac{\langle \left( x-c\right) C_{n}^{(a)}(x),Q_{n}^{\lambda }(x)\rangle
_{\lambda }}{||Q_{n}^{\lambda }||_{\lambda }^{2}} &=&\frac{\langle
C_{n-1}^{(a)}(x),(x-c)(x-c-1)Q_{n}^{\lambda }(x)\rangle }{||Q_{n}^{\lambda
}||_{\lambda }^{2}}-\left( n+a-c-2\right) \\
&=&\sigma _{n,-1}\frac{||C_{n-1}^{(a)}||^{2}}{||Q_{n}^{\lambda }||_{\lambda
}^{2}}-\left( n+a-c-2\right) .
\end{eqnarray*}%
Hence%
\begin{eqnarray*}
\rho _{n,n} &=&\sigma _{n,0}\frac{||C_{n}^{(a)}||^{2}}{||Q_{n}^{\lambda
}||_{\lambda }^{2}}-\frac{\gamma _{n-1}\sigma _{n,-1}}{a_{n-2}}\left(
b_{n}-1\right) \frac{||C_{n-1}^{(a)}||^{2}}{||Q_{n}^{\lambda }||_{\lambda
}^{2}} \\
&&+\frac{\gamma _{n-1}}{a_{n-2}}\left( b_{n}-1\right) \left( n+a-c-2\right)
+\gamma _{n}(b_{n}-1)-a(b_{n}-1) \\
&=&\sigma _{n,0}\frac{||C_{n}^{(a)}||^{2}}{||Q_{n}^{\lambda }||_{\lambda
}^{2}}-(b_{n}-1)\left[ \frac{\gamma _{n-1}}{a_{n-2}}\left( \sigma _{n,-1}%
\frac{||C_{n-1}^{(a)}||^{2}}{||Q_{n}^{\lambda }||_{\lambda }^{2}}-\left(
n+a-c-2\right) \right) +(a-\gamma _{n})\right]
\end{eqnarray*}%
Similarly, we compute%
\begin{eqnarray*}
\rho _{n,n-1} &=&\frac{\langle (x-c)(x-c-1)Q_{n}^{\lambda
}(x),Q_{n-1}^{\lambda }(x)\rangle _{\lambda }}{||Q_{n-1}^{\lambda
}||_{\lambda }^{2}}=\frac{\langle Q_{n}^{\lambda
}(x),(x-c)(x-c-1)Q_{n-1}^{\lambda }(x)\rangle _{\lambda }}{%
||Q_{n-1}^{\lambda }||_{\lambda }^{2}} \\
&=&\frac{\langle Q_{n}^{\lambda }(x),(x-c)(x-c-1)C_{n-1}^{(a)}(x)\rangle
_{\lambda }}{||Q_{n-1}^{\lambda }||_{\lambda }^{2}}-\frac{\gamma _{n-1}}{%
a_{n-2}}\left( b_{n}-1\right) \frac{||Q_{n}^{\lambda }||_{\lambda }^{2}}{%
||Q_{n-1}^{\lambda }||_{\lambda }^{2}},
\end{eqnarray*}%
where%
\begin{eqnarray*}
\frac{\langle Q_{n}^{\lambda }(x),(x-c)(x-c-1)C_{n-1}^{(a)}(x)\rangle
_{\lambda }}{||Q_{n-1}^{\lambda }||_{\lambda }^{2}} &=&\frac{\langle
(x-c)(x-c-1)Q_{n}^{\lambda }(x),C_{n-1}^{(a)}(x)\rangle }{||Q_{n-1}^{\lambda
}||_{\lambda }^{2}} \\
&=&\sigma _{n,-1}\frac{||C_{n-1}^{(a)}||^{2}}{||Q_{n-1}^{\lambda
}||_{\lambda }^{2}}.
\end{eqnarray*}%
Thus%
\begin{equation*}
\rho _{n,n-1}=\sigma _{n,-1}\frac{||C_{n-1}^{(a)}||^{2}}{||Q_{n-1}^{\lambda
}||_{\lambda }^{2}}-\frac{\gamma _{n-1}}{a_{n-2}}\left( b_{n}-1\right) \frac{%
||Q_{n}^{\lambda }||_{\lambda }^{2}}{||Q_{n-1}^{\lambda }||_{\lambda }^{2}}.
\end{equation*}%
Finally, using the same arguments, we conclude%
\begin{eqnarray*}
\rho _{n,n-2} &=&\frac{\langle (x-c)(x-c-1)Q_{n}^{\lambda
}(x),Q_{n-2}^{\lambda }(x)\rangle _{\lambda }}{||Q_{n-2}^{\lambda
}||_{\lambda }^{2}} \\
&=&\frac{\langle Q_{n}^{\lambda }(x),(x-c)(x-c-1)C_{n-2}^{(a)}(x)\rangle
_{\lambda }}{||Q_{n-2}^{\lambda }||_{\lambda }^{2}} \\
&=&\frac{\langle (x-c)(x-c-1)Q_{n}^{\lambda }(x),C_{n-2}^{(a)}(x)\rangle }{%
||Q_{n-2}^{\lambda }||_{\lambda }^{2}} \\
&=&\sigma _{n,-2}\frac{||C_{n-2}^{(a)}||^{2}}{||Q_{n-2}^{\lambda
}||_{\lambda }^{2}}.
\end{eqnarray*}%
This completes the proof.
\end{proof}



It is worth emphasizing that the values of the coefficients in the above
five-term recurrence relation depend on a number of quantities involved in
the present problem, such the coefficients $\beta _{n}$, $\gamma _{n}$ in (%
\ref{S2-C3TRR}), parameters $a_{n}$, $b_{n}$ in (\ref{S3-Parameters}),
coefficients $\sigma _{n,1}$ to $\sigma _{n,-2}$ in (\ref{[S3]-Proposition3}%
), etc.

Next, we prove an alternative recurrence formula with rational coefficients,
that can be used to find the next polynomial ${Q_{n+1}^{\lambda }(x)}$, from
the two former consecutive polynomials ${Q_{n}^{\lambda }(x)}$ and ${%
Q_{n-1}^{\lambda }(x)}$ in the Sobolev-type Charlier SMOP $\{Q_{n}^{\lambda
}\}_{n\geq 0}$. Using the ladder difference equations (\ref{Ladder1Psi}) and
(\ref{Ladder2Psi}) we have%
\begin{eqnarray*}
\Xi _{2}(x;n){Q_{n}^{\lambda }(x)}-\Delta {Q_{n}^{\lambda }(x)} &=&\Xi
_{1}(x;n)Q_{n-1}^{\lambda }(x), \\
\Theta _{1}(x;n+1)Q_{n}^{\lambda }(x)+\Delta {Q_{n}^{\lambda }(x)} &=&\Theta
_{2}(x;n+1){Q_{n+1}^{\lambda }(x)}.
\end{eqnarray*}%
Simply adding the above two equations, for every $n\geq 1$ we obtain the
aforesaid three term recurrence relation%
\begin{equation}
{Q_{n+1}^{\lambda }(x)}=\tilde{\beta}(x;n)Q_{n}^{\lambda }(x)+\tilde{\gamma}%
(x;n)Q_{n-1}^{\lambda }(x),  \label{TTRRratcoeff}
\end{equation}%
with rational coefficients%
\begin{equation*}
\tilde{\beta}(x;n)=\frac{\Xi _{2}(x;n)+\Theta _{1}(x;n+1)}{\Theta _{2}(x;n+1)%
},\quad \text{and\quad }\tilde{\gamma}(x;n)=-\frac{\Xi _{1}(x;n)}{\Theta
_{2}(x;n+1)}.
\end{equation*}%
Having in mind Remark \ref{[S4]-Remark1}, we can express $\tilde{\beta}(x;n)$
and $\tilde{\gamma}(x;n)$ exclusively in terms of $A_{1}(x;n)$ and $%
B_{1}(x;n)$ in (\ref{S3-Bavinck(2.13)}), and the rest of involved parameters 
$a$, $\lambda $, $c$, and $n$. From (\ref{S3-DifPSI1}), (\ref{S4-Coefs-ABCD2}%
) and (\ref{S4-Lambdaxn}) we conclude%
\begin{equation*}
\tilde{\beta}(x;n)=\frac{a(n-1)A_{1}(x;n-1)\left[
(-a-n+x)A_{1}(x;n+1)+B_{1}(x;n+1)\right] }{B_{1}(x;n-1)\left[
(-a-n+x+1)A_{1}(x;n)+B_{1}(x;n)\right] +a(n-1)A_{1}(x;n-1)A_{1}(x;n)}+
\end{equation*}%
\begin{equation*}
\frac{B_{1}(x;n-1)\left[ a^{2}+a(n-2x-1)+(n-x)^{2}-n+x\right]
A_{1}(x;n+1)-(a+n-x-1)B_{1}(x;n+1)}{B_{1}(x;n-1)\left[
(-a-n+x+1)A_{1}(x;n)+B_{1}(x;n)\right] +a(n-1)A_{1}(x;n-1)A_{1}(x;n)}
\end{equation*}%
and%
\begin{equation*}
\tilde{\gamma}(x;n)=\frac{a(n-1)\left\{ (A_{1}(x;n+1)\left[
(a+n-x)B_{1}(x;n)-anA_{1}(x;n)\right] -B_{1}(x;n)B_{1}(x;n+1)\right\} }{%
B_{1}(x;n-1)\left[ (-a-n+x+1)A_{1}(x;n)+B_{1}(x;n)\right]
+a(n-1)A_{1}(x;n-1)A_{1}(x;n)}.
\end{equation*}%
As might be expected, when $\lambda =0$, we have $A_{1}(x;n)=1$ and $%
B_{1}(x;n)=0$ above, and therefore we recover the corresponding coefficients 
$\beta _{n}$ and $\gamma_{n}$ in (\ref{S2-C3TRR}). Thus,%
\begin{eqnarray*}
\tilde{\beta}(x;n) &=&\frac{a(n-1)(-a-n+x)}{a(n-1)}=x-n-a=x-\beta _{n}, \\
\tilde{\gamma}(x;n) &=&\frac{-a^{2}n(n-1)}{a(n-1)}=-an=-\gamma _{n}.
\end{eqnarray*}



\section{Asymptotic behavior of the zeros}

\label{[S6]-Zeros}



In this Section we obtain new results on zero behavior of the Sobolev-type
Charlier orthogonal polynomials $Q_{n}^{\lambda }(x)$, which are different
and complementary to that encountered in the literature so far. We will
analyze the behavior of zeros of $Q_{n}^{\lambda }(x)$ as a function of the
mass $\lambda $, when $\lambda $ tends from zero to infinity as well as we
characterize the exact values of $\lambda $ such the smallest (respectively,
the largest) zero of $\{{Q_{n}^{\lambda }}\}_{n\geq 0}$ is located outside
of $I=\mathrm{supp}(\psi ^{(a)})$. In order to do that, we use a technique
developed and proved in \cite[Lemma 1]{BDR-JCAM02} and \cite[Lemmas 1 and 2]%
{DMR-ANM10}, concerning the behavior and the asymptotics of the zeros of
linear combinations of two polynomials $h,g\in \mathbb{P}$ with interlacing
zeros, such that $f(x)=h_{n}(x)+\lambda g_{n}(x)$. From now on, we will
refer to this technique as the \textit{Interlacing Lemma}, and for the
convenience of the reader, we include here the part in which we are
interested.



\begin{lemma}[Interlacing Lemma]
\label{[S6]-InterlacingLemma8} Let $h_{n}(x)=a(x-x_{1})\cdots (x-x_{n})$ and 
$g_{n}(x)=b(x-y_{1})\cdots (x-y_{n})$ be polynomials with real and simple
zeros, where $a$ and $b$ are real positive constants. If%
\begin{equation*}
y_{1}<x_{1}<\cdots <y_{n}<x_{n},
\end{equation*}%
then, for any real constant $\lambda >0$, the polynomial%
\begin{equation*}
f(x)=h_{n}(x)+\lambda g_{n}(x)
\end{equation*}%
has $n$ real zeros $\eta _{1}<\cdots <\eta _{n}$ which interlace with the
zeros of $h_{n}(x)$ and $g_{n}(x)$ in the following way 
\begin{equation*}
y_{1}<\eta _{1}<x_{1}<\cdots <y_{n}<\eta _{n}<x_{n}.
\end{equation*}%
Moreover, each $\eta _{k}=\eta _{k}(\lambda )$ is a decreasing function of $%
\lambda $ and, for each $k=1,\ldots ,n$, 
\begin{equation*}
\lim_{\lambda \rightarrow \infty }\eta _{k}=y_{k}\quad \text{and}\quad
\lim_{\lambda \rightarrow \infty }\lambda \lbrack \eta _{k}-y_{k}]=\dfrac{%
-h_{n}(y_{k})}{g_{n}^{\prime }(y_{k})}.
\end{equation*}
\end{lemma}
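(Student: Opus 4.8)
The plan is to prove existence and interlacing of the zeros by a sign-counting argument combined with the intermediate value theorem, then to establish monotonicity in $\lambda$ via implicit differentiation, and finally to extract both limits from the defining relation $h_n(\eta_k)+\lambda g_n(\eta_k)=0$ by elementary estimates.

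First I would record the signs of $h_n$ and $g_n$ at the interlacing nodes. Since $a,b>0$, for a point $t$ that is not a zero the sign of each polynomial equals $(-1)^m$ times its (positive) leading coefficient, where $m$ is the number of its zeros lying to the right of $t$. Using $y_1<x_1<\cdots<y_n<x_n$, exactly the nodes $x_k,\dots,x_n$ lie to the right of $y_k$, so $\mathrm{sgn}\,h_n(y_k)=(-1)^{\,n-k+1}$; likewise exactly $y_{k+1},\dots,y_n$ lie to the right of $x_k$, so $\mathrm{sgn}\,g_n(x_k)=(-1)^{\,n-k}$. Because $f(y_k)=h_n(y_k)$ and $f(x_k)=\lambda g_n(x_k)$ with $\lambda>0$, these two values have opposite signs, so $f$ vanishes at some $\eta_k\in(y_k,x_k)$. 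The intervals $(y_k,x_k)$, $k=1,\dots,n$, are pairwise disjoint (since $x_k<y_{k+1}$), hence this produces $n$ distinct real zeros $\eta_1<\cdots<\eta_n$; as $\deg f=n$ (its leading coefficient $a+\lambda b$ is positive), these are all the zeros and each is simple, which also yields the asserted interlacing chain.

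Next, for the monotonicity, I would exploit that each $\eta_k$ is a simple zero, so $f'(\eta_k)\neq0$ and the implicit function theorem makes $\eta_k=\eta_k(\lambda)$ differentiable. Differentiating $h_n(\eta_k)+\lambda g_n(\eta_k)=0$ in $\lambda$ gives $\eta_k'(\lambda)=-g_n(\eta_k)/f'(\eta_k)$. The sign of each factor is again fixed by the node configuration: since $y_k<\eta_k<x_k<y_{k+1}$ one has $\mathrm{sgn}\,g_n(\eta_k)=(-1)^{\,n-k}$, while the sign change of $f$ across the simple zero $\eta_k$ forces $\mathrm{sgn}\,f'(\eta_k)=\mathrm{sgn}\,f(x_k)=(-1)^{\,n-k}$. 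The quotient is therefore negative, so $\eta_k$ decreases in $\lambda$.

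Finally, monotone decrease together with the bound $\eta_k>y_k$ guarantees that $L:=\lim_{\lambda\to\infty}\eta_k$ exists. Rewriting the defining relation as $g_n(\eta_k)=-h_n(\eta_k)/\lambda$ and letting $\lambda\to\infty$ (the right side tends to $0$ because $\eta_k$ stays in the bounded interval $[y_k,x_k]$) gives $g_n(L)=0$; as $y_k$ is the only zero of $g_n$ in $[y_k,x_k)$, we get $L=y_k$. For the rate I would solve for $\lambda=-h_n(\eta_k)/g_n(\eta_k)$ and use $g_n(y_k)=0$ to write
\[
\lambda[\eta_k-y_k]=\frac{-h_n(\eta_k)}{\bigl(g_n(\eta_k)-g_n(y_k)\bigr)/(\eta_k-y_k)}\;\xrightarrow[\lambda\to\infty]{}\;\frac{-h_n(y_k)}{g_n'(y_k)},
\]
the denominator converging to $g_n'(y_k)\neq0$ by the definition of the derivative (recall $y_k$ is a simple zero of $g_n$). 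I expect the only delicate point to be the consistent bookkeeping of the parities $(-1)^{\,n-k}$ and the verification that $f'(\eta_k)$ carries the sign of $f(x_k)$; everything else is routine.
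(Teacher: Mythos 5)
Your proof is correct. Note that the paper itself gives no proof of this lemma: it is imported verbatim from the cited references (Bracciali--Dimitrov--Sri Ranga, Lemma 1, and Dimitrov--Mello--Rafaeli, Lemmas 1 and 2), so there is no in-paper argument to compare against. Your self-contained derivation is the standard one and all steps check out: the parity bookkeeping $\mathrm{sgn}\,h_{n}(y_{k})=(-1)^{n-k+1}$ and $\mathrm{sgn}\,g_{n}(x_{k})=(-1)^{n-k}$ is right, the intermediate value theorem plus the degree count $\deg f=n$ with leading coefficient $a+\lambda b>0$ accounts for all $n$ zeros and their simplicity, the implicit differentiation $\eta_{k}'(\lambda)=-g_{n}(\eta_{k})/f'(\eta_{k})$ together with the observation that $f'(\eta_{k})$ carries the sign of $f(x_{k})$ gives monotonicity, and the rewriting $\lambda=-h_{n}(\eta_{k})/g_{n}(\eta_{k})$ combined with the difference quotient for $g_{n}$ at its simple zero $y_{k}$ yields both limits. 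The one point worth making explicit, which you implicitly use, is that $\eta_{k}>y_{k}$ strictly for every $\lambda$ (so that $g_{n}(\eta_{k})\neq0$ and the division is legitimate); this follows from $f(y_{k})=h_{n}(y_{k})\neq0$, which your interlacing chain already guarantees.
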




Concerning the classical Charlier polynomials $C_{n}^{(a)}(x)$, we denote by 
$\{x_{n,r}\}_{r=1}^{n}$ their zeros, all arranged in an increasing order.
When $a>0$\ we have $\psi ^{(a)}$ is positive-definite so all the zeros $%
\{x_{n,r}\}_{r=1}^{n}$ are simple and lie in $(0,+\infty )$. At the same
time, it is known that $Q_{n}^{\lambda }(x)$ can have complex zeros, and
this fact depends entirely on the choice of the parameter $c$. For example,
in Figure \ref{S6-figura1} are shown the two complex zeros of $%
Q_{4}^{100}(x) $ (for $a=0.34$) mentioned in the last Remark of (\cite[p.27]%
{B-JCAM95}), whose numerical values are $0.00403781$, $1.12129$, and $%
2.74947\pm 0.403581\,i$.

\begin{figure}[th]
\centerline{\includegraphics[width=11cm,keepaspectratio]{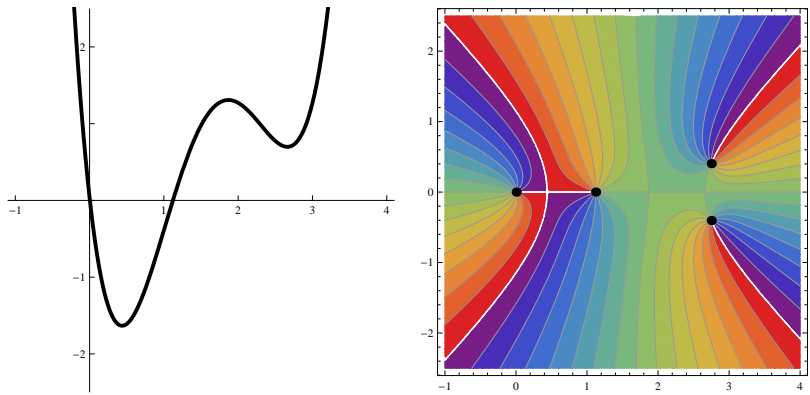}}
\caption{Graphs of $Q_{4}^{100}$ and its zeros for $a=0.34$ and $c=2$.}
\label{S6-figura1}
\end{figure}
As was proved in \cite[Corollary 3.4 b), p. 24]{B-JCAM95}, if $c+1<\inf I$,
then $Q_{n}^{\lambda }(x)$ has $n$\ real zeros $\{\eta _{n,r}^{\lambda
}\}_{r=1}^{n}$ satisfying the interlacing property%
\begin{equation}
\eta _{n,1}^{\lambda }<x_{n,1}<\eta _{n,2}^{\lambda }<x_{n,2}<\cdots <\eta
_{n,n}^{\lambda }<x_{n,n}  \label{S6-Interlac1}
\end{equation}%
where $I$ is the interval containing the spectrum of the Poisson
distribution $\psi ^{(a)}$, as was introduced in (\ref{S1-CharlierMeasure}).
Note that in the present case, $\inf I=0$.

The Interlacing Lemma deals only with real and simple zeros, so in order to
get the results of this Section, it will be necessary to put some extra
restriction on the values of $c$. From now on we make the assumption $c\in 
\mathbb{R}\diagdown \lbrack -1,+\infty )$, and therefore we restrict
ourselves to the case in which the set of zeros of $Q_{n}^{\lambda }(x)$ are
all simple and real.

On the other hand, from the connection formula \cite[(2.5)]{B-JCAM95}%
\begin{equation}
Q_{n}^{\lambda }(x)=C_{n}^{(a)}(x)-\frac{\lambda \Delta C_{n}^{(a)}(c)}{%
1+\lambda K_{n-1}^{(1,1)}(c,c)}K_{n-1}^{(0,1)}(x,c)  \label{S6-QconK}
\end{equation}%
we define the following \textit{limit}\ polynomials%
\begin{equation}
G_{n}^{(a)}(x)=\lim_{\lambda \rightarrow \infty }Q_{n}^{\lambda
}(x)=C_{n}^{(a)}(x)-\frac{\Delta C_{n}^{(a)}(c)}{K_{n-1}^{(1,1)}(c,c)}%
K_{n-1}^{(0,1)}(x,c),  \label{S6-GconK}
\end{equation}%
where $K_{n-1}^{(0,1)}(x,c)$ is given by (see \cite[p. 21]{B-JCAM95})%
\begin{equation}
K_{n-1}^{(0,1)}(x,c)=\frac{\mathfrak{C}_{n-1}^{(a)}(x,c)}{%
||C_{n-1}^{(a)}||^{2}}C_{n}^{(a)}(x)-\frac{\mathfrak{C}_{n}^{(a)}(x,c)}{%
||C_{n-1}^{(a)}||^{2}}C_{n-1}^{(a)}(x),  \label{S6-Knm1xc(01)}
\end{equation}%
with%
\begin{equation*}
\mathfrak{C}_{n}^{(a)}(x,c)=\frac{C_{n}^{(a)}(c)+(x-c)\Delta C_{n}^{(a)}(c)}{%
(x-c)(x-c-1)}.
\end{equation*}%
In order to prove the interlacing between the zeros of $C_{n}^{(a)}(x)$ and $%
G_{n}^{(a)}(x)$, we follow a two step process.



\begin{lemma}
\label{[S6]-Lemma9}Let $c\in \mathbb{R}\diagdown \lbrack -1,+\infty )$, and
let $\{x_{n,r}\}_{r=1}^{n}$, $\{\kappa _{n-1,r}\}_{r=1}^{n-1}$\ denote the
zeros of $C_{n}^{(a)}(x)$ and $K_{n-1}^{(0,1)}(x,c)$, respectively, all
arranged in an increasing order. Then, the zeros of $K_{n-1}^{(0,1)}(x,c)$
are all real, and the inequalities%
\begin{equation*}
x_{n,1}<x_{n-1,1}<\kappa _{n-1,1}<x_{n,2}<\cdots
<x_{n,n-1}<x_{n-1,n-1}<\kappa _{n-1,n-1}<x_{n,n+1}
\end{equation*}%
hold for every $n\in \mathbb{N}$.
\end{lemma}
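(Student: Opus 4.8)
The plan is to locate the $n-1$ zeros of the degree-$(n-1)$ polynomial $K_{n-1}^{(0,1)}(\cdot,c)$ by a sign-change count over the $2n-1$ points obtained by interlacing the zeros of $C_{n}^{(a)}$ and $C_{n-1}^{(a)}$. First I would record that $K_{n-1}^{(0,1)}(x,c)=\sum_{k=0}^{n-1}\frac{\Delta C_{k}^{(a)}(c)}{||C_{k}^{(a)}||^{2}}C_{k}^{(a)}(x)$ has degree exactly $n-1$, since its leading coefficient $\frac{(n-1)C_{n-2}^{(a)}(c)}{||C_{n-1}^{(a)}||^{2}}$ is nonzero because $c<-1$ lies to the left of the positive spectrum, so $C_{n-2}^{(a)}(c)\neq 0$. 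I would also invoke the classical interlacing $x_{n,r}<x_{n-1,r}<x_{n,r+1}$ of two consecutive Charlier polynomials, which follows from the three term recurrence (\ref{S2-C3TRR}).

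Next I would evaluate $K_{n-1}^{(0,1)}(x,c)$ at the two families of nodes using the closed form (\ref{S6-Knm1xc(01)}). Since $(x-c)(x-c-1)>0$ on $[0,\infty)$ when $c+1<0$, at a zero $x_{n,r}$ of $C_{n}^{(a)}$ one finds $K_{n-1}^{(0,1)}(x_{n,r},c)=-||C_{n-1}^{(a)}||^{-2}\,\mathfrak{C}_{n}^{(a)}(x_{n,r},c)\,C_{n-1}^{(a)}(x_{n,r})$, and at a zero $x_{n-1,r}$ of $C_{n-1}^{(a)}$ one finds $K_{n-1}^{(0,1)}(x_{n-1,r},c)=||C_{n-1}^{(a)}||^{-2}\,\mathfrak{C}_{n-1}^{(a)}(x_{n-1,r},c)\,C_{n}^{(a)}(x_{n-1,r})$. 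The signs $\mathrm{sgn}\,C_{n-1}^{(a)}(x_{n,r})=(-1)^{n-r}$ and $\mathrm{sgn}\,C_{n}^{(a)}(x_{n-1,r})=(-1)^{n-r}$ are read off from the classical interlacing. The one remaining ingredient is the sign of the factor $\mathfrak{C}_{m}^{(a)}$ at the zeros of $C_{m}^{(a)}$, for $m=n$ and $m=n-1$, and establishing it is the heart of the matter.

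Writing $\mathfrak{C}_{m}^{(a)}(x,c)=L_{m}(x)/[(x-c)(x-c-1)]$ with $L_{m}(x)=C_{m}^{(a)}(c)+(x-c)\Delta C_{m}^{(a)}(c)$ the secant line of $C_{m}^{(a)}$ through $x=c$ and $x=c+1$, I would prove that $\mathrm{sgn}\,L_{m}(x)=(-1)^{m-1}$ for every $x\geq x_{m,1}$, whence $\mathrm{sgn}\,\mathfrak{C}_{m}^{(a)}(x_{m,j},c)=(-1)^{m-1}$ for all $j$. Since $\Delta C_{m}^{(a)}(c)=mC_{m-1}^{(a)}(c)$ by (\ref{S2-StructRel}) and $c<0$ lies left of all positive zeros, the alternation $\mathrm{sgn}\,C_{k}^{(a)}(c)=(-1)^{k}$ makes the slope have sign $(-1)^{m-1}$, so it suffices to show $\mathrm{sgn}\,L_{m}(x_{m,1})=(-1)^{m-1}$, i.e. that the root of $L_{m}$ lies to the left of $x_{m,1}$. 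This is exactly where the hypothesis $c+1<0$ is essential: on $(-\infty,x_{m,1}]$ the zeros of $(C_{m}^{(a)})''$ all exceed $x_{m,1}$, so $C_{m}^{(a)}$ is strictly convex or concave there with $\mathrm{sgn}\,(C_{m}^{(a)})''=(-1)^{m}$; as both interpolation nodes satisfy $c<c+1<0\leq x_{m,1}$, the line $L_{m}$ can meet the strictly convex or concave graph of $C_{m}^{(a)}$ only at $c$ and $c+1$, which forces $L_{m}(x_{m,1})\neq 0$ with the sign of the slope, namely $(-1)^{m-1}$, and this sign then persists for all $x\geq x_{m,1}$. The hard part is precisely this convexity argument; note it needs $m\geq 2$, so that $C_{m}^{(a)}$ is genuinely nonlinear, the smallest cases being degenerate.

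Feeding these signs into the two evaluations above gives $\mathrm{sgn}\,K_{n-1}^{(0,1)}(x_{n,r},c)=(-1)^{r}$ and $\mathrm{sgn}\,K_{n-1}^{(0,1)}(x_{n-1,r},c)=(-1)^{r}$. Along the ordered nodes $x_{n,1}<x_{n-1,1}<x_{n,2}<x_{n-1,2}<\cdots$ the polynomial therefore keeps its sign across each consecutive pair $x_{n,r},x_{n-1,r}$ and changes sign across each gap $(x_{n-1,r},x_{n,r+1})$; this yields $n-1$ sign changes, hence at least one real zero in every interval $(x_{n-1,r},x_{n,r+1})$. Because $K_{n-1}^{(0,1)}(\cdot,c)$ has degree $n-1$, these exhaust its zeros: all are real and simple, with exactly one $\kappa_{n-1,r}$ in each such interval, and combining this with the classical interlacing of $C_{n}^{(a)}$ and $C_{n-1}^{(a)}$ produces the full chain. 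Everything outside the convexity step is routine sign bookkeeping.
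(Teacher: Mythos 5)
Your proof is correct, but it follows a genuinely different route from the paper. The paper's proof is a two-line appeal to an external result (Corollary 1.3 of Jordaan--To\'{o}kos, \cite{JT-ANM09}) applied to the representation (\ref{S6-Knm1xc(01)}), resting on the assertion that the coefficient functions $\mathfrak{C}_{n-1}^{(a)}(x,c)/||C_{n-1}^{(a)}||^{2}$ and $\mathfrak{C}_{n}^{(a)}(x,c)/||C_{n-1}^{(a)}||^{2}$ are continuous and of constant sign on $(0,+\infty)$ --- a claim the paper does not justify. You instead run a self-contained sign-change count at the $2n-1$ interlaced zeros of $C_{n}^{(a)}$ and $C_{n-1}^{(a)}$, and the heart of your argument is precisely a proof of the constant-sign property of $\mathfrak{C}_{m}^{(a)}(\cdot,c)$ on $[x_{m,1},+\infty)$ via the secant-line/convexity argument on $(-\infty,x_{m,1}]$. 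That is exactly the missing ingredient in the paper's proof, and your version is in fact more careful: the paper's literal claim of constant sign on all of $(0,+\infty)$ can fail (e.g.\ for $m=2$, $a=4$ and $c$ slightly below $-1$ the zero of the secant line $L_{2}$ lands inside $(0,x_{2,1})$), whereas constant sign on the interval actually containing the zeros --- which is what both the cited corollary and your sign bookkeeping require --- does hold, by your argument. What the paper's route buys is brevity and reuse of a general interlacing lemma; what yours buys is a complete, verifiable proof that also pins down the exact sign pattern $\mathrm{sgn}\,K_{n-1}^{(0,1)}(x_{n,r},c)=\mathrm{sgn}\,K_{n-1}^{(0,1)}(x_{n-1,r},c)=(-1)^{r}$. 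Your caveat about the degenerate small cases is also well taken: for $n=2$ one has $L_{1}\equiv C_{1}^{(a)}$, hence $\kappa_{1,1}=x_{1,1}$ and the strict inequality in the lemma actually fails, so the statement should be read for $n\geq 3$ (and the terminal subscript $x_{n,n+1}$ in the displayed chain is a typo for $x_{n,n}$); these are defects of the lemma as stated, not of your proof.
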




\begin{proof}
The main tool here will be \cite[Corollary 1.3]{JT-ANM09} applied to (\ref%
{S6-Knm1xc(01)}). The zeros of $C_{n}^{(a)}(x)$ and $C_{n-1}^{(a)}(x)$ are
all real, and interlace on the interval $\left( 0,+\infty \right) $. Their
respective coefficients in (\ref{S6-Knm1xc(01)}), namely $\mathfrak{C}%
_{n-1}^{(a)}(x,c)||C_{n-1}^{(a)}||^{-2}$ and $\mathfrak{C}%
_{n}^{(a)}(x,c)||C_{n-1}^{(a)}||^{-2}$ are both continuous and have constant
sign on $\left( 0,+\infty \right) $.

Thus, (\ref{S6-Knm1xc(01)}) satisfies the hypothesis of \cite[Corollary 1.3]%
{JT-ANM09} and therefore the Lemma follows.
\end{proof}



\begin{lemma}
\label{[S6]-Lemma10}Let $c\in \mathbb{R}\diagdown \lbrack -1,+\infty )$, and
let $\{x_{n,r}\}_{r=1}^{n}$, $\{\kappa _{n-1,r}\}_{r=1}^{n-1}$\ and $%
\{y_{n,r}\}_{r=1}^{n}$ be the zeros of $C_{n}^{(a)}(x)$, $%
K_{n-1}^{(0,1)}(x,c)$ and $G_{n}^{(a)}(x)$, respectively, all arranged in an
increasing order. Then, the $n$\ zeros of $G_{n}^{(a)}(x)$ are all real, and
the inequalities%
\begin{equation}
y_{n,1}<x_{n,1}<\kappa _{n-1,1}<y_{n,2}<\cdots <x_{n,n-1}<\kappa
_{n-1,n-1}<y_{n,n}<x_{n,n}  \label{S6-Interlac-1}
\end{equation}%
hold for every $n\in \mathbb{N}$.
\end{lemma}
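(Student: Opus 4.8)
The plan is to treat the limit polynomial $G_{n}^{(a)}(x)$ from (\ref{S6-GconK}) as a constant-coefficient linear combination of two polynomials whose zeros already interlace, and then to pin down each zero of $G_{n}^{(a)}$ by a sign-change argument. Writing $A=\Delta C_{n}^{(a)}(c)/K_{n-1}^{(1,1)}(c,c)$, we have $G_{n}^{(a)}(x)=C_{n}^{(a)}(x)-A\,K_{n-1}^{(0,1)}(x,c)$, which is monic of degree $n$. By Lemma \ref{[S6]-Lemma9} the zeros $\{x_{n,r}\}$ of $C_{n}^{(a)}$ and the zeros $\{\kappa_{n-1,r}\}$ of $K_{n-1}^{(0,1)}(\cdot,c)$ satisfy $x_{n,1}<\kappa_{n-1,1}<x_{n,2}<\cdots<\kappa_{n-1,n-1}<x_{n,n}$. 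This is exactly the configuration handled by \cite[Corollary 1.3]{JT-ANM09}, so one route is to invoke it directly (as in the proof of Lemma \ref{[S6]-Lemma9}); I prefer instead to make the location of the zeros explicit through the sign count below, since the same information is what places each $y_{n,r}$ in its interval.

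First I would fix all signs using the hypothesis $c<-1$, hence $c<0$. Since each $C_{m}^{(a)}$ is monic with all zeros in $(0,\infty)$, we get $\operatorname{sign}C_{m}^{(a)}(c)=(-1)^{m}$, and by (\ref{S2-StructRel}) $\Delta C_{m}^{(a)}(c)=mC_{m-1}^{(a)}(c)$, so $\operatorname{sign}\Delta C_{m}^{(a)}(c)=(-1)^{m-1}$. As $K_{n-1}^{(1,1)}(c,c)>0$, this gives $\operatorname{sign}A=(-1)^{n-1}$, and the leading coefficient of $K_{n-1}^{(0,1)}(\cdot,c)$, namely $\Delta C_{n-1}^{(a)}(c)/||C_{n-1}^{(a)}||^{2}$, has sign $(-1)^{n}$. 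Evaluating $G_{n}^{(a)}$ at the interlacing nodes then yields clean alternations: at $\kappa_{n-1,r}$ one has $G_{n}^{(a)}(\kappa_{n-1,r})=C_{n}^{(a)}(\kappa_{n-1,r})$, whose sign is $(-1)^{n-r}$ because $\kappa_{n-1,r}\in(x_{n,r},x_{n,r+1})$; while at $x_{n,r}$ one has $G_{n}^{(a)}(x_{n,r})=-A\,K_{n-1}^{(0,1)}(x_{n,r},c)$, and reading the sign of $K_{n-1}^{(0,1)}$ off its leading coefficient together with the position of $x_{n,r}$ relative to the $\kappa$'s gives $\operatorname{sign}G_{n}^{(a)}(x_{n,r})=(-1)^{n+r}$. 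Thus $G_{n}^{(a)}$ has sign $(-1)^{n+r}$ at \emph{both} $x_{n,r}$ and $\kappa_{n-1,r}$.

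With these signs in hand the zeros are located by the intermediate value theorem. Along the ordered chain $x_{n,1}<\kappa_{n-1,1}<x_{n,2}<\cdots<\kappa_{n-1,n-1}<x_{n,n}$ the sign of $G_{n}^{(a)}$ is constant on each pair $\{x_{n,r},\kappa_{n-1,r}\}$ and flips when $r$ increases by one, so there is a sign change, hence a zero, in each interval $(\kappa_{n-1,r-1},x_{n,r})$, producing $n-1$ real zeros $y_{n,r}$ with $\kappa_{n-1,r-1}<y_{n,r}<x_{n,r}$ for $r=2,\dots,n$. The remaining zero is the one that is easy to miss: since $G_{n}^{(a)}$ is monic of degree $n$, its sign as $x\to-\infty$ is $(-1)^{n}$, opposite to its sign $(-1)^{n+1}$ at $x_{n,1}$, which forces a further zero $y_{n,1}<x_{n,1}$. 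These $n$ zeros are distinct and exhaust $G_{n}^{(a)}$, so all its zeros are real and simple; combining the two families of inequalities with Lemma \ref{[S6]-Lemma9} assembles the full chain (\ref{S6-Interlac-1}). The main obstacle is precisely this bookkeeping: getting $\operatorname{sign}A=(-1)^{n-1}$ right (this is where $c<0$ is used) and capturing the leftmost zero $y_{n,1}$ from the behavior at $-\infty$ rather than from a sign change between two listed nodes.
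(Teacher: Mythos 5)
Your argument is correct, and at bottom it is the same strategy the paper uses: regard $G_{n}^{(a)}$ via (\ref{S6-GconK}) as a linear combination, with constant (hence constant-sign) coefficients, of $C_{n}^{(a)}(x)$ and $K_{n-1}^{(0,1)}(x,c)$, whose zeros interlace by Lemma \ref{[S6]-Lemma9}. The difference is that the paper settles the matter in two lines by citing \cite[Lemma 1.1]{JT-ANM09} for exactly this configuration, whereas you reprove that lemma in situ by an explicit sign count. Your bookkeeping checks out: for $c<-1$ one indeed has $\operatorname{sign}C_{m}^{(a)}(c)=(-1)^{m}$ and, by (\ref{S2-StructRel}), $\operatorname{sign}\Delta C_{m}^{(a)}(c)=(-1)^{m-1}$, so $A=\Delta C_{n}^{(a)}(c)/K_{n-1}^{(1,1)}(c,c)$ has sign $(-1)^{n-1}$ while the leading coefficient $\Delta C_{n-1}^{(a)}(c)/||C_{n-1}^{(a)}||^{2}$ of $K_{n-1}^{(0,1)}(\cdot,c)$ has sign $(-1)^{n}$; the common sign $(-1)^{n+r}$ of $G_{n}^{(a)}$ at both $x_{n,r}$ and $\kappa_{n-1,r}$ then produces the $n-1$ sign changes in the intervals $(\kappa_{n-1,r-1},x_{n,r})$, and the monic behavior of $G_{n}^{(a)}$ at $-\infty$ accounts for the remaining zero $y_{n,1}<x_{n,1}$, for a total of $n$ distinct real zeros of a degree-$n$ polynomial. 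What your version buys is transparency: it shows precisely where the hypothesis $c<-1$ enters (through the signs of $C_{m}^{(a)}(c)$) and why the extra zero falls to the \emph{left} of $x_{n,1}$ rather than to the right of $x_{n,n}$ --- neither of which is visible from the citation alone. The cost is length; the paper's reference-based proof is shorter and reuses the same tool already invoked in Lemma \ref{[S6]-Lemma9}.
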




\begin{proof}
Here we use \cite[Lemma 1.1]{JT-ANM09} together with (\ref{S6-GconK}). In
the former Lemma \ref{[S6]-Lemma9} we proved that the zeros of $%
C_{n}^{(a)}(x)$ and $K_{n-1}^{(0,1)}(x,c)$ are all real, and interlace in
the interval $\left( 0,+\infty \right) $. Also, the coefficient of $%
C_{n}^{(a)}(x)$ is always constant, and $-\Delta C_{n}^{(a)}(c)\diagup
K_{n-1}^{(1,1)}(c,c)$ has constant sign (for a fixed $n$) at each of the
zeros of $C_{n}^{(a)}(x)$, so therefore the Lemma follows.
\end{proof}



Next we normalize the connection formula (\ref{S6-QconK}) in a more useful
way, in order to apply the Interlacing Lemma and obtain some results
concerning monotonicity, asymptotics, and speed of convergence for the zeros 
$\{\eta _{n,r}^{\lambda }\}_{r=1}^{n}$\ in terms of the mass $\lambda $.



\begin{proposition}
\label{[S6]-Proposition5} The polynomials in $\{\tilde{Q}_{n}^{\lambda
}\}_{n\geq 0}$ , with $\tilde{Q}_{n}^{\lambda }(x)=\lambda
_{n-1}^{c}Q_{n}^{\lambda }(x)$, can be represented as%
\begin{equation}
\tilde{Q}_{n}^{\lambda }(x)=C_{n}^{(a)}(x)+\lambda
K_{n-1}^{(1,1)}(c,c)G_{n}^{(a)}(x)  \label{S6-QnHat}
\end{equation}%
where $\lambda _{n-1}^{c}=1+\lambda K_{n-1}^{(1,1)}(c,c)$, and $%
K_{n-1}^{(1,1)}(c,c)>0$ for every $n\in \mathbb{N}$.
\end{proposition}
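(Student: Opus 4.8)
The plan is to derive (\ref{S6-QnHat}) by a direct algebraic rescaling of the connection formula (\ref{S6-QconK}), reading off $G_{n}^{(a)}(x)$ from its definition (\ref{S6-GconK}). The guiding observation is that the factor $K_{n-1}^{(1,1)}(c,c)$ sitting in the denominator of $G_{n}^{(a)}(x)$ is exactly the factor that multiplication by $\lambda _{n-1}^{c}=1+\lambda K_{n-1}^{(1,1)}(c,c)$ will reorganize, trading the $\lambda$-dependent denominator of (\ref{S6-QconK}) for the $\lambda$-independent normalization of $G_{n}^{(a)}(x)$.

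First I would multiply both sides of (\ref{S6-QconK}) by $\lambda _{n-1}^{c}$. This clears the denominator on the right-hand side and gives
\begin{equation*}
\tilde{Q}_{n}^{\lambda }(x)=\lambda _{n-1}^{c}Q_{n}^{\lambda }(x)=\left( 1+\lambda K_{n-1}^{(1,1)}(c,c)\right) C_{n}^{(a)}(x)-\lambda \,\Delta C_{n}^{(a)}(c)\,K_{n-1}^{(0,1)}(x,c).
\end{equation*}
Next I would expand the candidate right-hand side of (\ref{S6-QnHat}) by inserting the definition (\ref{S6-GconK}) of $G_{n}^{(a)}(x)$, obtaining
\begin{equation*}
C_{n}^{(a)}(x)+\lambda K_{n-1}^{(1,1)}(c,c)\left[ C_{n}^{(a)}(x)-\frac{\Delta C_{n}^{(a)}(c)}{K_{n-1}^{(1,1)}(c,c)}K_{n-1}^{(0,1)}(x,c)\right] .
\end{equation*}
Here the prefactor $K_{n-1}^{(1,1)}(c,c)$ cancels the identical factor in the denominator of the second summand, leaving $-\lambda \,\Delta C_{n}^{(a)}(c)\,K_{n-1}^{(0,1)}(x,c)$, and collecting the two multiples of $C_{n}^{(a)}(x)$ reproduces precisely the expression of the previous display. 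Hence the two sides agree and (\ref{S6-QnHat}) follows.

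The remaining assertion $K_{n-1}^{(1,1)}(c,c)>0$ I would settle using the sum-of-squares representation (\ref{S2-S2-Kij}), which reads $K_{n-1}^{(1,1)}(c,c)=\sum_{k=0}^{n-1}[\Delta C_{k}^{(a)}(c)]^{2}/||C_{k}^{(a)}||^{2}\geq 0$, together with the first order relation (\ref{S2-StructRel}) giving $\Delta C_{k}^{(a)}(c)=kC_{k-1}^{(a)}(c)$. Under the standing hypothesis $c\in \mathbb{R}\setminus [-1,+\infty )$ the point $c$ is negative, while all zeros of the classical Charlier polynomials lie in $(0,+\infty )$, so $C_{k-1}^{(a)}(c)\neq 0$ and every term with $k\geq 1$ is strictly positive, forcing the sum to be positive. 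There is no genuine obstacle in this proposition: it is essentially a normalization, and the only point requiring care is the cancellation of $K_{n-1}^{(1,1)}(c,c)$ between the scaling factor and the denominator of $G_{n}^{(a)}(x)$, which is what makes the two representations match.
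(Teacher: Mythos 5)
Your proposal is correct and follows essentially the same route as the paper: both arguments amount to clearing the denominator $1+\lambda K_{n-1}^{(1,1)}(c,c)$ in (\ref{S6-QconK}) and matching the result against the definition (\ref{S6-GconK}) of $G_{n}^{(a)}(x)$, with the positivity of $K_{n-1}^{(1,1)}(c,c)$ read off from the sum-of-squares form (\ref{S2-S2-Kij}). Your treatment of the positivity is in fact slightly more careful than the paper's (which merely calls the summands positive), since you note that the strict inequality needs $\Delta C_{k}^{(a)}(c)=kC_{k-1}^{(a)}(c)\neq 0$, guaranteed because $c<-1$ lies outside $(0,+\infty)$ where all Charlier zeros live.
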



\begin{proof}
From (\ref{S6-GconK}) we have%
\begin{equation*}
K_{n-1}^{(0,1)}(x,c)=\frac{K_{n-1}^{(1,1)}(c,c)}{\Delta C_{n}^{(a)}(c)}\left[
C_{n}^{(a)}(x)-G_{n}^{(a)}(x)\right] .
\end{equation*}%
Combining the above expression with (\ref{S6-QconK})%
\begin{equation*}
Q_{n}^{\lambda }(x)=C_{n}^{(a)}(x)-\frac{\lambda K_{n-1}^{(1,1)}(c,c)}{%
\left( 1+\lambda K_{n-1}^{(1,1)}(c,c)\right) }\left[
C_{n}^{(a)}(x)-G_{n}^{(a)}(x)\right]
\end{equation*}%
Next we multiply the above by $1+\lambda K_{n-1}^{(1,1)}(c,c)$%
\begin{eqnarray*}
Q_{n}^{\lambda }(x)\left( 1+\lambda K_{n-1}^{(1,1)}(c,c)\right)
&=&C_{n}^{(a)}(x)\left( 1+\lambda K_{n-1}^{(1,1)}(c,c)\right) -\lambda
K_{n-1}^{(1,1)}(c,c)\left[ C_{n}^{(a)}(x)-G_{n}^{(a)}(x)\right] \\
&=&C_{n}^{(a)}(x)+\lambda K_{n-1}^{(1,1)}(c,c)G_{n}^{(a)}(x)
\end{eqnarray*}%
which yields (\ref{S6-QnHat}). Next, from (\ref{S2-S2-Kij}) one has%
\begin{equation*}
K_{n-1}^{(1,1)}(c,c)=\sum_{k=0}^{n-1}\frac{[\Delta C_{k}^{(a)}(c)]^{2}}{%
||C_{k}^{(a)}||^{2}}.
\end{equation*}%
As the right hand side of the above formula is always the sum of positive
quantities, the proof is completed.
\end{proof}



Taking into account that the positive constant $K_{n-1}^{(1,1)}(c,c)$ does
not depend on $\lambda $, we can now use (\ref{S6-QnHat}) to obtain results
about monotonicity, asymptotics, and speed of convergence for the zeros of $%
Q_{n}^{\lambda }(x)$ in terms of the mass $\lambda $. Thus, from (\ref%
{S6-QnHat}), Lemma \ref{[S6]-Lemma10}, (\ref{S6-Interlac-1}), we are in the
hypothesis of the Interlacing Lemma, and we immediately conclude the
following results.



\begin{theorem}
\label{[S6]-Theorem7} If $c\in \mathbb{R}\diagdown \lbrack -1,+\infty )$,
then the following inequalities%
\begin{equation*}
y_{n,1}<\eta _{n,1}^{\lambda }<x_{n,1}<y_{n,2}<\eta _{n,2}^{\lambda
}<x_{n,2}<\cdots <y_{n,n}<\eta _{n,n}^{\lambda }<x_{n,n}
\end{equation*}%
hold for every $n\in \mathbb{N}$. Moreover, each $\eta _{k}^{\lambda }=\eta
_{k}^{\lambda }(\lambda )$ is a decreasing function of $\lambda $ and, for
each $k=1,\ldots ,n$,%
\begin{equation}
\lim_{\lambda \rightarrow \infty }\eta _{k}^{\lambda }=y_{k}\quad \text{and}%
\quad \lim_{\lambda \rightarrow \infty }\lambda \lbrack \eta _{k}^{\lambda
}-y_{k}]=\dfrac{-C_{n}^{(a)}(y_{n,k})}{[G_{n}^{(a)}]^{\prime }(y_{n,k})}.
\label{S6-Speed}
\end{equation}
\end{theorem}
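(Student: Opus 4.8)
The plan is to verify that the normalized representation established in Proposition \ref{[S6]-Proposition5} places us exactly in the setting of the Interlacing Lemma (Lemma \ref{[S6]-InterlacingLemma8}), and then to simply read off all three conclusions (interlacing, monotonicity, asymptotics with speed) from that Lemma. Since $\lambda_{n-1}^{c}=1+\lambda K_{n-1}^{(1,1)}(c,c)>0$ is a positive constant independent of $x$, the polynomials $\tilde{Q}_{n}^{\lambda}(x)=\lambda_{n-1}^{c}Q_{n}^{\lambda}(x)$ and $Q_{n}^{\lambda}(x)$ share the same zeros $\{\eta_{n,r}^{\lambda}\}_{r=1}^{n}$. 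Hence it suffices to analyze the zeros of $\tilde{Q}_{n}^{\lambda}$, which by (\ref{S6-QnHat}) has the form $C_{n}^{(a)}(x)+\lambda K_{n-1}^{(1,1)}(c,c)\,G_{n}^{(a)}(x)$, i.e. precisely a combination $h_{n}(x)+\tilde{\lambda}\,g_{n}(x)$ with $h_{n}=C_{n}^{(a)}$, $g_{n}=G_{n}^{(a)}$, and effective mass $\tilde{\lambda}=\lambda K_{n-1}^{(1,1)}(c,c)>0$.

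First I would check the hypotheses of the Interlacing Lemma. Under the standing assumption $c\in\mathbb{R}\diagdown[-1,+\infty)$, Lemma \ref{[S6]-Lemma10} guarantees that the $n$ zeros $\{y_{n,r}\}_{r=1}^{n}$ of $G_{n}^{(a)}$ are real and simple, and that together with the zeros $\{x_{n,r}\}_{r=1}^{n}$ of $C_{n}^{(a)}$ they satisfy the strict interlacing (\ref{S6-Interlac-1}), namely $y_{n,1}<x_{n,1}<\cdots<y_{n,n}<x_{n,n}$. Both $C_{n}^{(a)}$ and $G_{n}^{(a)}$ are monic of degree $n$, so their leading coefficients are positive, matching the requirement that $a$ and $b$ in the Lemma be positive constants. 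Thus the pair $(h_{n},g_{n})=(C_{n}^{(a)},G_{n}^{(a)})$ meets every hypothesis of Lemma \ref{[S6]-InterlacingLemma8}, with the roles $x_{k}\leftrightarrow x_{n,k}$ and $y_{k}\leftrightarrow y_{n,k}$.

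Applying the Interlacing Lemma to $\tilde{Q}_{n}^{\lambda}$ then yields directly that its $n$ zeros are real, interlace as
\[
y_{n,1}<\eta_{n,1}^{\lambda}<x_{n,1}<y_{n,2}<\eta_{n,2}^{\lambda}<x_{n,2}<\cdots<y_{n,n}<\eta_{n,n}^{\lambda}<x_{n,n},
\]
and that each $\eta_{n,k}^{\lambda}$ is a decreasing function of the effective mass $\tilde{\lambda}$. Since $\tilde{\lambda}=\lambda K_{n-1}^{(1,1)}(c,c)$ is a strictly increasing function of $\lambda$ (the positive constant $K_{n-1}^{(1,1)}(c,c)$ being independent of $\lambda$), monotone decrease in $\tilde{\lambda}$ is equivalent to monotone decrease in $\lambda$, and the limit $\tilde{\lambda}\to\infty$ corresponds to $\lambda\to\infty$; this gives $\lim_{\lambda\to\infty}\eta_{n,k}^{\lambda}=y_{n,k}$. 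For the speed of convergence, the Lemma gives $\lim_{\tilde{\lambda}\to\infty}\tilde{\lambda}[\eta_{n,k}^{\lambda}-y_{n,k}]=-h_{n}(y_{n,k})/g_{n}'(y_{n,k})$; substituting $\tilde{\lambda}=\lambda K_{n-1}^{(1,1)}(c,c)$ and using $C_{n}^{(a)}=h_{n}$, $G_{n}^{(a)}=g_{n}$ produces $\lim_{\lambda\to\infty}\lambda[\eta_{n,k}^{\lambda}-y_{n,k}]=-C_{n}^{(a)}(y_{n,k})/\big(K_{n-1}^{(1,1)}(c,c)[G_{n}^{(a)}]'(y_{n,k})\big)$, which is (\ref{S6-Speed}). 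I do not anticipate a genuine obstacle here: the entire argument is an application of an already-proven lemma to an already-established representation. The only point requiring care is the bookkeeping of the constant $K_{n-1}^{(1,1)}(c,c)$ when passing between the effective mass $\tilde{\lambda}$ and the true parameter $\lambda$, particularly in the speed-of-convergence formula where that constant must be tracked correctly.
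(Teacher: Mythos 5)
Your proposal follows essentially the same route as the paper: the paper's entire argument is the sentence preceding the theorem, namely that (\ref{S6-QnHat}), Lemma \ref{[S6]-Lemma10} and (\ref{S6-Interlac-1}) place one in the hypotheses of the Interlacing Lemma, from which all three conclusions are read off. You fill in the details the paper leaves implicit (monicity of $G_{n}^{(a)}$, the irrelevance of the positive factor $\lambda_{n-1}^{c}$ for the zero set, and the passage between $\lambda$ and the effective mass $\tilde{\lambda}=\lambda K_{n-1}^{(1,1)}(c,c)$), and all of that is correct.

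One point deserves flagging. Your careful bookkeeping of the constant yields
\begin{equation*}
\lim_{\lambda \rightarrow \infty }\lambda \lbrack \eta _{n,k}^{\lambda
}-y_{n,k}]=\dfrac{-C_{n}^{(a)}(y_{n,k})}{K_{n-1}^{(1,1)}(c,c)\,[G_{n}^{(a)}]^{\prime }(y_{n,k})},
\end{equation*}
which is \emph{not} literally the displayed formula (\ref{S6-Speed}): the theorem as printed omits the factor $K_{n-1}^{(1,1)}(c,c)$ in the denominator. Your derivation is the correct one (the Interlacing Lemma applies with mass $\tilde{\lambda}$, so the limit of $\tilde{\lambda}[\eta_{n,k}^{\lambda}-y_{n,k}]$ equals $-C_{n}^{(a)}(y_{n,k})/[G_{n}^{(a)}]^{\prime }(y_{n,k})$, and dividing by $K_{n-1}^{(1,1)}(c,c)$ converts this to a limit in $\lambda$); the discrepancy is an inaccuracy in the paper's statement rather than in your argument. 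You should not assert that your formula ``is (\ref{S6-Speed})'' --- either state the corrected limit or note explicitly that (\ref{S6-Speed}) holds only after replacing $\lambda$ by $\tilde{\lambda}$.
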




Under the above assumptions on $c$, at most one of the zeros of $%
Q_{n}^{\lambda }(x)$ is located outside $(0,+\infty )$. Next we provide the
explicit value $\lambda _{0}$ of the mass such that for $\lambda >\lambda
_{0}$ this situation appears, i.e, one of the zeros is located outside $%
(0,+\infty )$.



\begin{corollary}
\label{[S6]-Corollary2} If $c\in \mathbb{R}\diagdown \lbrack -1,+\infty )$,
then the smallest zero $\eta _{n,1}^{\lambda }=\eta _{n,1}^{\lambda }(c)$
satisfies%
\begin{equation*}
\begin{array}{c}
\eta _{n,1}^{\lambda }>0,\hspace{7pt}\mathrm{for}\hspace{7pt}\lambda
<\lambda _{0},\smallskip \\ 
\eta _{n,1}^{\lambda }=0,\hspace{7pt}\mathrm{for}\hspace{7pt}\lambda
=\lambda _{0},\smallskip \\ 
\eta _{n,1}^{\lambda }<0,\hspace{7pt}\mathrm{for}\hspace{7pt}\lambda
>\lambda _{0},%
\end{array}%
\end{equation*}%
where%
\begin{equation}
\lambda _{0}=\lambda _{0}(n,a,c)=\left( \frac{\Delta C_{n}^{(a)}(c)}{%
C_{n}^{(a)}(0)}K_{n-1}^{(0,1)}(0,c)-K_{n-1}^{(1,1)}(c,c)\right) ^{-1}>0.
\label{S6-MinMass}
\end{equation}
\end{corollary}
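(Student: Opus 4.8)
The plan is to pin down the single value of the mass at which the smallest zero $\eta_{n,1}^{\lambda}$ passes through the origin, and then let the monotonicity furnished by Theorem~\ref{[S6]-Theorem7} dictate the three sign regimes. First I would note that, by the interlacing in Theorem~\ref{[S6]-Theorem7}, for every $r\ge 2$ one has $\eta_{n,r}^{\lambda}>y_{n,r}>x_{n,1}>0$, so among the $n$ zeros of $Q_{n}^{\lambda}$ only the smallest one, $\eta_{n,1}^{\lambda}$, can ever reach or cross the origin. This gives the equivalence
\[
\eta_{n,1}^{\lambda}=0\quad\Longleftrightarrow\quad Q_{n}^{\lambda}(0)=0 ,
\]
i.e. zero is a root of $Q_{n}^{\lambda}$ exactly when it is its smallest root.

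Next I would determine $\lambda_{0}$ explicitly by solving $Q_{n}^{\lambda}(0)=0$. Evaluating the connection formula (\ref{S6-QconK}) at $x=0$ gives
\[
Q_{n}^{\lambda}(0)=C_{n}^{(a)}(0)-\frac{\lambda\,\Delta C_{n}^{(a)}(c)}{1+\lambda K_{n-1}^{(1,1)}(c,c)}\,K_{n-1}^{(0,1)}(0,c),
\]
and setting the right-hand side to zero yields a single linear equation in $\lambda$. Clearing the denominator and isolating $\lambda$ produces
\[
\lambda=\left(\frac{\Delta C_{n}^{(a)}(c)}{C_{n}^{(a)}(0)}\,K_{n-1}^{(0,1)}(0,c)-K_{n-1}^{(1,1)}(c,c)\right)^{-1},
\]
which is precisely $\lambda_{0}$ in (\ref{S6-MinMass}); hence $\eta_{n,1}^{\lambda_{0}}=0$. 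Granting for the moment that $\lambda_{0}>0$, the trichotomy is immediate: since $\lambda\mapsto\eta_{n,1}^{\lambda}$ is strictly decreasing by Theorem~\ref{[S6]-Theorem7}, the equality $\eta_{n,1}^{\lambda_{0}}=0$ forces $\eta_{n,1}^{\lambda}>0$ for $\lambda<\lambda_{0}$ and $\eta_{n,1}^{\lambda}<0$ for $\lambda>\lambda_{0}$.

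The delicate point, and the step I expect to be the main obstacle, is verifying $\lambda_{0}>0$, equivalently that the crossing genuinely occurs at a positive mass, which by the limit (\ref{S6-GconK}) and Theorem~\ref{[S6]-Theorem7} is the same as $y_{n,1}<0$. A convenient reformulation is $\lambda_{0}=-C_{n}^{(a)}(0)\big/\bigl(K_{n-1}^{(1,1)}(c,c)\,G_{n}^{(a)}(0)\bigr)$, obtained by substituting (\ref{S6-GconK}), so that positivity amounts to $C_{n}^{(a)}(0)$ and $G_{n}^{(a)}(0)$ carrying opposite signs. Here I would run a sign analysis tied to $c<-1$. From (\ref{S2-ChHyper}) one gets $C_{n}^{(a)}(0)=(-a)^{n}$, and the same hypergeometric representation shows that for $c<0$ every term of $_{2}F_{0}$ is positive, whence $C_{m}^{(a)}(c)$ has sign $(-1)^{m}$; combined with (\ref{S2-StructRel}) this gives that $\Delta C_{n}^{(a)}(c)$ has sign $(-1)^{n-1}$. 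Writing $K_{n-1}^{(0,1)}(0,c)=\sum_{k=0}^{n-1}C_{k}^{(a)}(0)\,\Delta C_{k}^{(a)}(c)/||C_{k}^{(a)}||^{2}$ from (\ref{S2-S2-Kij}), each summand carries the sign $(-1)^{k}(-1)^{k-1}=-1$, so $K_{n-1}^{(0,1)}(0,c)<0$. These facts already make the first term of $\lambda_{0}^{-1}$ positive; what remains, and where the real content lies, is the magnitude estimate showing that this term exceeds $K_{n-1}^{(1,1)}(c,c)$, equivalently that the subtracted contribution dominates in $G_{n}^{(a)}(0)$ so that $G_{n}^{(a)}(0)$ acquires the sign $(-1)^{n-1}$ opposite to $C_{n}^{(a)}(0)$, forcing the smallest zero $y_{n,1}$ of the limit polynomial strictly below the origin.
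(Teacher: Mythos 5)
Your proof takes exactly the route of the paper: evaluate the connection formula (\ref{S6-QconK}) at $x=0$, impose $Q_{n}^{\lambda }(0)=0$, and solve the resulting linear equation for $\lambda $ to arrive at (\ref{S6-MinMass}); the three sign regimes then follow from the strict monotonicity of $\eta _{n,1}^{\lambda }$ in Theorem \ref{[S6]-Theorem7}. You are in fact more careful than the paper on the reduction, since you check via the interlacing that only the smallest zero can ever reach the origin, so that $Q_{n}^{\lambda }(0)=0$ really is equivalent to $\eta _{n,1}^{\lambda }=0$.

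The step you leave open, namely $\lambda _{0}>0$ (equivalently $y_{n,1}<0$), is the only genuine gap, and it is worth noting that the paper's own proof does not address it either: it merely computes $\lambda _{0}$ and asserts its sign. Your sign bookkeeping for $C_{n}^{(a)}(0)$, $\Delta C_{n}^{(a)}(c)$ and $K_{n-1}^{(0,1)}(0,c)$ is correct as far as it goes, but, as you say, it only exhibits $\lambda _{0}^{-1}$ as a difference of two positive quantities, so a magnitude comparison is still required. A clean way to close it: from (\ref{S6-GconK}) (or by letting $\lambda \rightarrow \infty $ in (\ref{S3-Bavinck(2.4)})) one gets $\Delta G_{n}^{(a)}(c)=0$, i.e. $G_{n}^{(a)}(c)=G_{n}^{(a)}(c+1)$, so by Rolle's theorem $[G_{n}^{(a)}]^{\prime }$ vanishes at some $\xi \in (c,c+1)\subset (-\infty ,0)$. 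Since $G_{n}^{(a)}$ has $n$ simple real zeros by Lemma \ref{[S6]-Lemma10}, its critical points lie strictly between consecutive zeros, whence $y_{n,1}<\xi <0$. As $y_{n,2}>x_{n,1}>0$ by (\ref{S6-Interlac-1}), exactly one zero of the monic polynomial $G_{n}^{(a)}$ is negative, so $G_{n}^{(a)}(0)$ has sign $(-1)^{n-1}$, opposite to that of $C_{n}^{(a)}(0)=(-a)^{n}$, and then $\lambda _{0}=-C_{n}^{(a)}(0)\big/\bigl(K_{n-1}^{(1,1)}(c,c)\,G_{n}^{(a)}(0)\bigr)>0$ follows from (\ref{S6-QnHat}).
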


\begin{proof}
It suffices to use (\ref{S6-QconK}) together with the fact that $%
Q_{n}^{\lambda }\left( 0\right) =0$ if and only if $\lambda =\lambda _{0}$%
\begin{equation*}
Q_{n}^{\lambda }(0)=C_{n}^{(a)}(0)-\frac{\lambda _{0}\Delta C_{n}^{(a)}(c)}{%
1+\lambda _{0}K_{n-1}^{(1,1)}(c,c)}K_{n-1}^{(0,1)}(0,c)=0.
\end{equation*}%
Therefore%
\begin{equation*}
\lambda _{0}=\lambda _{0}(n,a,c)=\left( \frac{\Delta C_{n}^{(a)}(c)}{%
C_{n}^{(a)}(0)}K_{n-1}^{(0,1)}(0,c)-K_{n-1}^{(1,1)}(c,c)\right) ^{-1}.
\end{equation*}
\end{proof}



It would be of interest to compare the results of Theorem \ref{[S6]-Theorem7}
with \cite[Th. 4, p. 70]{DHM-NA12}. In that case, the zeros of the
Laguerre-Sobolev type polynomials also obey to an electrostatic model that
does not exist in the present Sobolev-type Charlier case. Our conjecture is
that in this case the zeros of the Sobolev-type Charlier polynomials also
seem to behave under the effect of an electrostatic potential which, so far,
is unknown to us.

Next we show some numerical experiments using Mathematica $©$ software,
dealing with the least zero of $Q_{n}^{\lambda }(x)$. We are interested to
show the location and behavior of this least zero. In the first two tables
we show the position, for some choices of the mass $\lambda $, of the first
zeros of $Q_{n}^{\lambda }(x)$ of degree $n=7$ and $a=2$. When the least
zero of the polynomial is outside $\left( 0,+\infty \right) $ is highlighted
in bold type. For $\lambda =0$ we obviously recover the least zero and the
second zero of the Charlier polynomials $C_{n}^{(a)}(x)$. When the mass
point is located at $c=-5$ we obtain%
\begin{equation*}
\begin{tabular}{|l|r|r|r|r|r|r|}
\hline
$\eta _{7,k}^{\lambda }$ & $\lambda =0$ & $\lambda =5.0\cdot 10^{-12}$ & $%
\lambda =5.0\cdot 10^{-8}$ & $\lambda =5.0\cdot 10^{-7}$ & $\lambda
=5.0\cdot 10^{-6}$ & $\lambda =5.0$ \\ \hline
$k=1$ & $0.015807$ & $0.0158059$ & $0.00424094$ & $\mathbf{-0.620631}$ & $%
\mathbf{-4.67916}$ & $\mathbf{-5.87285}$ \\ \hline
$k=2$ & $1.14616$ & $1.14616$ & $1.08515$ & $0.257578$ & $0.102767$ & $%
0.0962811$ \\ \hline
\end{tabular}%
\end{equation*}%
and for $n=10$, $a=7$, and $c=-15$ we have%
\begin{equation*}
\begin{tabular}{|l|r|r|r|r|r|r|}
\hline
$\eta _{10,k}^{\lambda }$ & $\lambda =0$ & $\lambda =5.0\cdot 10^{-15}$ & $%
\lambda =5.0\cdot 10^{-13}$ & $\lambda =5.0\cdot 10^{-12}$ & $\lambda
=5.0\cdot 10^{-7}$ & $\lambda =5.0$ \\ \hline
$k=1$ & $0.332811$ & $0.332401$ & $0.286249$ & $\mathbf{-1.34917}$ & $%
\mathbf{-17.1465}$ & $\mathbf{-17.1471}$ \\ \hline
$k=2$ & $2.05847$ & $2.05765$ & $1.96819$ & $0.983817$ & $0.632546$ & $%
0.632544$ \\ \hline
\end{tabular}%
\end{equation*}

In support of Corollary \ref{[S6]-Corollary2}, we provide the exact values
of $\lambda _{0}$ for the above two cases. From (\ref{S6-MinMass}) we see
that the smallest zero of the Sobolev-type Charlier polynomial of degree $%
n=7 $, for $a=2$ and with the mass point located at $c=-5$ is $%
\lambda_{0}=6.55003\cdot 10^{-8}\in (5.0\cdot 10^{-8},5.0\cdot 10^{-7})$, as
one can deduce from the first table. Concerning the second table we see $%
\lambda_{0}(10,7,-15)=2.1602\cdot 10^{-12}\in (5.0\cdot 10^{-13},5.0\cdot
10^{-12})$.

Finally, another interesting question is to study, for a fixed value $%
\lambda $, the behavior of zeros of Sobolev-type Charlier polynomials in
terms of the parameter $a$. Notice that, for a fixed value of $a$ we can
loose its negative zero. We show the behavior of the first two zeros to give
more information about their relative spacing. For instance, let us show the
first two zeros of the Sobolev-type Charlier polynomials of degree $n=8$,
when $\lambda =7\cdot 10^{-9}$ and the mass point is located at $c=-9$ 
\begin{equation*}
\begin{tabular}{|r|r|r|r|r|r|r|}
\hline
$\eta _{8,k}^{\lambda }:$ & $a=1$ & $a=2$ & $a=3$ & $a=4$ & $a=5$ & $a=6$ \\ 
\hline
$k=1$ & $\mathbf{-10.2156}$ & $\mathbf{-9.17105}$ & $\mathbf{-4.43974}$ & $%
\mathbf{-0.720877}$ & $0.0143978$ & $0.315444$ \\ \hline
$k=2$ & $0.00096038$ & $0.0303099$ & $0.166524$ & $0.680407$ & $1.51815$ & $%
2.12898$ \\ \hline
\end{tabular}%
\end{equation*}



\section*{Acknowledgments}



We would like to thank the anonymous referees for carefully reading the manuscript and for giving constructive comments, which substantially helped us to improve the quality of the paper. We especially thank the anonymous referee who made us notice the Remark 1.

The work of the first author was partially supported by Dirección General de Investigación Científica y Técnica, Ministerio de Economía y Competitividad of Spain, under grant MTM2015-65888-C4-2-P.



\end{document}